\documentclass[12pt,sans]{article}

\usepackage[margin=1in]{geometry}
\usepackage{microtype}
\usepackage{url}
\usepackage[colorlinks = true,
            linkcolor = blue,
            urlcolor  = blue,
            citecolor = blue,
            anchorcolor = blue]{hyperref}
\usepackage[utf8]{inputenc} 
\usepackage[T1]{fontenc}    
\usepackage{lmodern}
\usepackage{mathrsfs}
\usepackage[sans]{dsfont}   
\usepackage{booktabs}       
\usepackage{nicefrac}       
\usepackage{amsmath,amsthm,amssymb,amsfonts}   
\usepackage{latexsym}
\usepackage{optidef}
\usepackage{graphicx}
\usepackage{caption}
\usepackage{subcaption}
\usepackage{epstopdf}
\usepackage{cleveref}
\usepackage{color}
\usepackage{algorithm,algpseudocode}
\usepackage{tcolorbox}
\usepackage{xcolor} 
\newtcolorbox{tbox}[3][]{%
colframe=#2,colback=#2!10,coltitle=#2!20!black,title={#3},#1}
\usepackage{accents}
\usepackage{enumitem}
\usepackage{tikz-cd} 
\tikzset{smalltext/.style={"\textup{\small #1}" description}}
\usetikzlibrary{fit,backgrounds,arrows.meta}
\usepackage{silence}
\usepackage{autonum} 

\newcommand{\esp}{\mathbb{E}}
\newcommand{\espn}{\mathbf{E}}
\newcommand{\prob}{\mathbb{P}}
\newcommand{\probn}{\mathbf{P}}
\DeclareMathOperator{\var}{\mathbb{V}}

\newcommand{\unit}{\mathbf{1}}
	
\newcommand{\re}{\mathbb{R}}
\DeclareMathOperator{\tr}{tr}
\DeclareMathOperator{\erank}{r}

\DeclareMathOperator{\Count}{Count}

\def\bfGamma{\boldsymbol{\Gamma}}
\def\bfSigma{\boldsymbol{\Sigma}}
\def\bfTheta{\boldsymbol{\Theta}}

\def\bfI{\mathbf{I}}

\def\bfT{\mathbf{T}}
\def\bfU{\mathbf{U}}

\def\bx{\boldsymbol x}

\def\bu{\boldsymbol u}
\def\bv{\boldsymbol v}

\def\bz{\boldsymbol z}

\def\btheta{\boldsymbol\theta}

\def\calB{\mathcal B}

\def\calE{\mathcal E}
\def\calF{\mathcal F}

\def\calI{\mathcal I}

\def\calN{\mathcal N}

\def\calP{\mathcal P}
\def\calR{\mathcal R}

\def\calT{\mathcal T}

\def\sfT{\mathsf{T}}

\def\mbB{\mathbb{B}}

\def\mbS{\mathbb{S}}

\DeclareMathOperator*{\argmin}{argmin}
\DeclareMathOperator*{\argmax}{argmax}
\DeclareMathOperator{\Sym}{Sym}

\theoremstyle{plain}
\newtheorem{theorem}{Theorem}
\newtheorem{proposition}[theorem]{Proposition}
\newtheorem{lemma}[theorem]{Lemma}
\newtheorem{corollary}[theorem]{Corollary}
\newtheorem{assumption}[theorem]{Assumption}

\theoremstyle{definition}

\theoremstyle{remark}

\newtheorem{remark}{Remark}

\numberwithin{equation}{section}

\title{Robust dimension-free estimation of simple random tensors: optimal guarantees under heavy tails and adversarial contamination}
\date{}

\author{
Roberto I. Oliveira \and
Zoraida F. Rico \and
Philip Thompson
}

\newcommand{\Addresses}{{
  \bigskip
  \footnotesize

Roberto I.~Oliveira, \textsc{IMPA, Brazil}\par\nopagebreak
  \textit{E-mail address}: \texttt{rbimfo@impa.br} 

\and 

Zoraida F.~Rico, \textsc{Bocconi University, Italy}\par\nopagebreak
  \textit{E-mail address}: \texttt{fr.zoraida@gmail.com} 

\and

  P.~Thompson, \textsc{FGV EMAp, School of Applied Mathematics, Brazil}\par\nopagebreak
  \textit{E-mail address}: \texttt{philip.thompson@fgv.br}

  \medskip
}}

\begin{document}

\maketitle
\Addresses

\begin{abstract}
  We study robust estimation of simple random tensors of arbitrary order $q\in\mathbb{N}$ under finite-moment assumptions and adversarial contamination.  We propose the first robust estimator achieving near-optimal dimension-free statistical rates in this setting. The estimator attains the near-optimal corruption rate whenever $p\ge2q$ moments are finite and continues to provide nontrivial guarantees throughout the weak-moment regime $q\le p\le2q$. Being based on directional trimmed means and minimax aggregation, our estimator is adaptive to $p$ and upper bounds on hypercontractive constants without resorting to interval-intersection procedures. Our analysis extends the trimmed-mean framework underlying recent advances in robust mean and covariance estimation to arbitrary tensor order. In particular, we establish concentration inequalities for higher-order counting and truncated empirical multi-vector product processes. We believe these inequalities could be of independent interest beyond the present application, including algorithmic robust estimation.
\end{abstract}




\section{Introduction}

Tensors provide a natural framework for representing high-order interactions in modern data. They arise throughout statistics, machine learning, signal processing and optimization, where moment tensors, tensor decompositions and multilinear methods play central roles. We refer to the surveys of \cite{2021Lim, 2025Ballard:Kolda, 2021Bi:Tang:Yuan:Zhang:Qu, 2025Auddy:Xia:Yuan} for comprehensive overviews of tensor methods and applications.

The focus of this work is the estimation of simple (moment) tensors of arbitrary order associated with a random vector \cite{2020Vershynin,2024Zhivotovskiy}. Let $q\in\mathbb{N}$ and $\bx\in\re^d$ be a centered random vector. Define the multilinear form 
\begin{align}
  \bx^{\otimes q}(\bu_1,\ldots,\bu_q) := 
  \langle\bx,\bu_1\rangle\cdots\langle\bx,\bu_q\rangle, 
  \quad \forall(\bu_1,\ldots,\bu_q)\in(\re^d)^q. 
\end{align}
The \emph{q}th-order simple tensor is the multilinear form 
$
  \bfT := \esp[\bx^{\otimes q}],
$
assuming it is well-defined. A notable example is the covariance operator $\bfSigma:=\esp[\bx^{\otimes2}]$, which we identify with the usual covariance matrix. Given an i.i.d. sample $\{\bx_i\}_{i=1}^n$, a natural estimator of $\bfT$ is the \emph{empirical $q$th-order simple tensor} 
$$
  \widehat{\bfT} := \frac1n\sum_{i=1}^n \bx_i^{\otimes q}.
$$
Simple moment tensors encode higher-order dependence beyond covariance and play an important role, e.g., in latent-variable models, tensor decomposition and method-of-moments estimation. 

In the case of a subgaussian random vector, the estimation of the covariance matrix and higher order simple tensors (in operator norm) by means of the empirical tensor reduces to obtaining concentration inequalities for the \emph{$q$th-order empirical process}\footnote{In above, $\|\cdot\|_2$ denotes the Euclidean norm in $\re^d$.}
\begin{align}
  \epsilon_q(\bx_1,\ldots,\bx_n) := \sup_{\|\bu\|_2=1}\left|
    \frac{1}{n}\sum_{i=1}^{n}\langle \bx_i,\bu\rangle^{q}
    - \esp[\langle \bx,\bu\rangle^{q}]
  \right|.
\end{align}
(Quite often, this is given in terms of the absolute moment $|\langle \bx,\bu\rangle|^q$ instead). The development of sharp concentration inequalities for $\epsilon_q$ has a rich literature, spanning both classical asymptotic and recent nonasymptotic results. For covariance estimation we refer, e.g., to \cite{1999Rudelson,2005Klartag:Mendelson, 2007MendelsonPajorTomczak-Jaegermann, 2010Adamczak:Litvak:Pajor:Tomczak-Jaegermann, 2010Mendelson, 2012Mendelson:Paouris, 2012Vershynin, 2013Srivastava:Vershynin, 2014Mendelson:Paouris}. A central objective has been to understand the dependence of these inequalities on the ambient dimension. Remarkably, other papers such as \cite{2014Lounici}, \cite{2017Koltchinskii:Lounici} establish the first optimal \emph{dimension-free} rates in terms of the \emph{effective rank}
$$
\erank(\bfSigma):=\frac{\tr(\bfSigma)}{\Vert\bfSigma\Vert}.
$$

As one might suspect, going beyond the 2nd order creates difficulties, even in the subgaussian case. In fact:
\begin{quote}
  \emph{In sharp contrast with covariance estimation $(q=2)$, the empirical tensor is not the best estimator one can hope for when $q>2$, even for Gaussian distributions and clean iid samples.}
\end{quote}
We refer to a detailed discussion in that regard in the recent works \cite{2021Mendelson,2025Al-Ghattas:Chen:Sanz-Alonso:SharpConcentration, 2025Al-Ghattas:Chen:Sanz-Alonso,2026Bartl:Mendelson}. Classical works on empirical simple tensors and their associated concentration inequalities include \cite{2007Guedon:Rudelson, 2008Mendelson, 2010Adamczak:Litvak:Pajor:Tomczak-Jaegermann, 2011Vershynin, 2012Vershynin, 2013Srivastava:Vershynin}. For more recent results we refer to \cite{2021Even:Massoulie,2024Zhivotovskiy,2025Al-Ghattas:Chen:Sanz-Alonso:SharpConcentration, 2025Al-Ghattas:Chen:Sanz-Alonso,2025Abdalla:Vershynin,2025Shang}.

Motivated by recent developments in the robust statistics literature, another line of research studies estimation beyond the subgaussian setting and design estimators with minimal distributional assumptions. The goal is to obtain (near-)optimal estimators under heavy tails and adversarial contamination, where classical empirical process techniques fail. For robust covariance estimation we refer, e.g., to \cite{2018Minsker, 2018Tikhomirov, 2020Minsker:Wei, 2019Ostrovskii:Rudi} with recent advances in \cite{2018Catoni:Giulini, 2020Mendelson:Zhivotovskiy,2026Abdalla:Zhivotovskiy, 2024Oliveira:Rico}. 

To our knowledge, only two previous works address robust estimation of simple random tensors \cite{2021Mendelson,2026Bartl:Mendelson}. We compare our results with these and the aforementioned works in Section \ref{ss:related:work}.

In this work we are motivated by the following questions:
\begin{enumerate}
  \item What is the optimal robust statistical rate for  simple random tensors (under heavy tails and adversarial contamination)?
  \item Can the recent dimension-free methodology developed for robust covariance estimation be extended to simple tensors of arbitrary order?
  \item Recent work on robust mean and covariance estimation has shown that \emph{counting} and \emph{truncated} empirical processes play a central role in the analysis of trimmed-mean estimators \cite{2021Lugosi:Mendelson,2020Mendelson:Zhivotovskiy,2024Oliveira:Rico,2026Abdalla:Zhivotovskiy}. Can these processes and the underlying methodology be generalized to arbitrary tensor order?
\end{enumerate}
Our main theorem answers these questions in the affirmative by providing a universal robust estimator for simple tensors of arbitrary order together with a unified analysis of the associated counting and truncated empirical processes.

\subsection{Main result}

We begin by introducing the assumptions used throughout the paper.

\begin{assumption}[Finite covariance]\label{assump:finite:covariance:main}
    Let $\bx_1,\ldots,\bx_{n}$ be an i.i.d. $n$-sized sample of a centered random vector $\bx\in\re^d$ with distribution $\probn$ and population covariance matrix $\bfSigma\neq\mathbf{0}$.
\end{assumption}

\begin{assumption}[Adversarial contamination model]\label{assump:contamination:model}
    Suppose Assumption \ref{assump:finite:covariance:main} holds. Let $\tilde\bx_1,\ldots,\tilde\bx_{n}$ be arbitrary random elements of $\re^d$ and $\epsilon\in[0,1/2)$ such that 
    $$
        \#\{i\in[n] : \tilde\bx_i\neq \bx_i\} \le \epsilon n.
    $$
\end{assumption}

The number $\epsilon$ is referred to as the \emph{contamination rate}. Assumption \ref{assump:contamination:model} is quite general, as it allows the adversary to choose the contaminated observations after observing the uncontaminated sample \cite{2019Diakonikolas:Kamath:Kane:Li:Moitra:Stewart,2021Lugosi:Mendelson}. This is strictly stronger than Huber's contamination model, where the contaminating distribution is fixed independently of the sample \cite{2018Chen:Gao:Ren}.

Throughout the paper, moment growth is measured through the $L^s$--$L^2$ hypercontractive constants. For every $s\ge2$, define
\[
\kappa_s
:=
\sup_{\Vert\bfSigma^{1/2}\bv\Vert_2=1}
\left(
\esp\big[
|\langle\bx,\bv\rangle|^s
\big]
\right)^{1/s}.
\]
The quantity $\kappa_s$ is commonly referred to as the \emph{$L^s$--$L^2$ hypercontractive constant}.

We also make the following assumption. 

\begin{assumption}[Rank estimation]\label{assump:trace:estimator}
  Suppose Assumption \ref{assump:contamination:model} holds. There exist a constant $C_{\probn}>0$, depending only on $\probn$, and a universal constant $c_0\in(0,1)$ such that, for any $\delta\in(0,c_0)$, $n\ge C_{\probn}(\erank(\bfSigma)\vee\log(1/\delta))$ and $\epsilon\le1/C_{\probn}$, there is an estimator $\widehat{\erank}:(\re^d)^n\rightarrow (0,\infty)$, depending only on $(n,\epsilon,\delta)$, such that with probability at least $1-\delta$, 
  $
    \frac{\erank(\bfSigma)}{3}\le\widehat{\erank}(\tilde\bx_1,\ldots,\tilde\bx_n) \le 3\erank(\bfSigma). 
  $
\end{assumption}

Assumption \ref{assump:trace:estimator} isolates the only point in our analysis where additional information on the underlying distribution is required. Existing robust rank estimators satisfy this assumption with $C_{\probn}\asymp\kappa_4^4$ under fourth-moment assumptions \cite{2020Mendelson:Zhivotovskiy,2026Abdalla:Zhivotovskiy,2024Oliveira:Rico}. Under more regular distributions this constant can be significantly smaller.\footnote{This includes covariance matrices with bounded effective rank or isotropic vectors with only $(2+\alpha)$ finite moments, for which $C_{\probn}\asymp1$ even though $\kappa_4=\infty$.}

We are now ready to state our main theorem. Let $\Sym_q^d(\re^d)$ denote the space of symmetric tensors of order $q$ over $\re^d$, endowed with its operator norm $\|\cdot\|$. Throughout, for $a,b\ge0$ and $s>0$, we write
$
a\lesssim_s b
$
to denote that $a\le C_sb$ for an absolute constant $C_s$ depending only on $s$, and
$
a\asymp_s b
$
whenever both $a\lesssim_s b$ and $b\lesssim_s a$ hold.

\begin{theorem}[Main result]\label{thm:main}
  Let $q\in\mathbb{N}$, $q\ge2$. Suppose Assumptions \ref{assump:finite:covariance:main}, \ref{assump:contamination:model}  and \ref{assump:trace:estimator} hold and there is $p \ge2\vee q$ such that $\kappa_p<\infty$. There is constant $C\ge C_{\probn}$ and absolute constant $c\in(0,c_0)$ such that the following holds. Fix any $\delta\in(0,c)$, $n\ge C(\erank(\bfSigma)\vee \log(4/\delta))$ and $\epsilon\le1/C$. Then there is an estimator $\widehat\bfT_*:(\re^d)^n\rightarrow \Sym_{d}^q(\re^d)$, depending only on $(n, \epsilon,\delta)$ and the estimator $\widehat{\erank}:(\re^d)^n\rightarrow (0,\infty)$, such that with probability at least $1-\delta$, if $p\ge2q$,
  \begin{align}
    \|\widehat\bfT_*(\tilde\bx_{1},\ldots,\tilde\bx_n)-\esp[\bx^{\otimes q}]\|
    \lesssim_q \kappa_{2q}^{q}\|\bfSigma\|^{\frac{q}{2}}
    \sqrt{\frac{\erank(\bfSigma)\vee\log(4/\delta)}{n}}
    + c_{p,q}(\epsilon)\cdot\kappa_{p}^{q}\|\bfSigma\|^{\frac{q}{2}}
    \epsilon^{1 - \frac{q}{p}},
  \end{align}
  with 
  $
    c_{p,q}(\epsilon) := 
    \left(\min\{ p, \log\left(\frac{q}{\epsilon} \right) \}\right)^{\frac{q}{2}}
    \lesssim_q \log^{\frac{q}{2}}\left(\frac{1}{\epsilon} \right).
  $
  On the same event, if $2\vee q \le p < 2q$,
  \[
    \|\widehat\bfT_*(\tilde\bx_{1},\ldots,\tilde\bx_n)-\esp[\bx^{\otimes q}]\|
    \lesssim_q \kappa_{p}^{q} \|\bfSigma\|^{\frac{q}{2}}\left(
      \frac{\erank(\bfSigma)\vee\log(4/\delta)}{n}
    \right)^{1-\frac{q}{p}} 
    + \kappa_{p}^{q}\|\bfSigma\|^{\frac{q}{2}}\epsilon^{1 - \frac{q}{p}}.
\]
\end{theorem}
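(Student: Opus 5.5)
We construct $\widehat\bfT_*$ from directional trimmed means combined with a minimax (sup over directions) aggregation, following the trimmed-mean route used for robust covariance estimation. Since symmetric tensors are determined by their diagonal values, we have $\|\bfT\|=\sup_{\|\bu\|_2=1}|\bfT(\bu,\ldots,\bu)|$ up to a constant depending only on $q$. It therefore suffices to estimate, uniformly over unit $\bu$, the scalar $\esp\langle\bx,\bu\rangle^q$.

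For each $\bu$, define $\widehat m(\bu)$ as the trimmed mean of the scalars $\langle\tilde\bx_i,\bu\rangle^q$. We discard the $k\asymp \epsilon n+\erank(\bfSigma)\vee\log(4/\delta)$ largest and smallest values. The effective rank is unknown, so we replace it by $\widehat{\erank}$ via Assumption \ref{assump:trace:estimator}; on its good event this changes $k$ only by a factor of 3. The estimator is then
\[
\widehat\bfT_*\in\argmin_{\bfT\in\Sym^q_d}\sup_{\|\bu\|_2=1}|\bfT(\bu,\ldots,\bu)-\widehat m(\bu)|.
\]
By the triangle inequality, $\|\widehat\bfT_*-\bfT\|\lesssim_q \sup_{\bu}|\widehat m(\bu)-\esp\langle\bx,\bu\rangle^q|$. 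This reduces the theorem to a uniform deviation bound for directional trimmed means.

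The uniform bound has two ingredients, which are the new empirical-process inequalities.

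\textbf{Counting process.} With probability $1-\delta/2$, uniformly in $\bu$, the number of clean indices with $|\langle\bx_i,\bu\rangle|>t\|\bfSigma\|^{1/2}$ is at most $k/2$ for a threshold $t$ depending on $k/n$. Using Markov's inequality with $p$ moments, $t\asymp\kappa_p (n/k)^{1/p}$; in the regime $p\ge 2q$, $t$ also carries a $\sqrt{\log}$ factor. Together with the contamination count $\epsilon n\le k/2$, this shows that the trimming thresholds lie in $[-t^q,t^q]\|\bfSigma\|^{q/2}$. The standard argument is to smooth the indicator by a Lipschitz bump, bound the supremum by its expectation via bounded differences, and bound the expectation by symmetrization plus a PAC-Bayes/Gaussian-smoothing argument over $\bu$. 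The Gaussian smoothing is what yields the dimension-free dependence through $\erank(\bfSigma)$.

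\textbf{Truncated multi-vector product process.} We need a bound on
\[
\sup_{\bu}\Big|\frac1n\sum_i \phi_t(\langle\bx_i,\bu\rangle)^q-\esp\,\phi_t(\langle\bx,\bu\rangle)^q\Big|,
\]
where $\phi_t$ clips at level $t\|\bfSigma\|^{1/2}$. The plan is to polarize into multilinear products $\prod_j\phi_t(\langle\bx_i,\bu_j\rangle)$. Each factor is Lipschitz and bounded, so a contraction argument peels off one factor at a time. The bound on the remaining factors, together with a variance bound via $\kappa_{2q}$, gives the term $\kappa_{2q}^q\|\bfSigma\|^{q/2}\sqrt{(\erank\vee\log)/n}$ by Talagrand/Bousquet concentration and PAC-Bayes chaining. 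For $p<2q$ the variance is infinite, so we instead use the bound $t^{2q-p}\kappa_p^p$ on the variance, which produces the $(\cdot/n)^{1-q/p}$ rate.

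Combining the two ingredients, the trimmed mean differs from the clean truncated mean by at most a bias of order $k/n\cdot t^q$ and a truncation bias $\esp|\langle\bx,\bu\rangle|^q\mathbf 1\{|\cdot|>t\}\le\kappa_p^p t^{q-p}$. With $k/n\asymp\epsilon+\erank/n$, optimizing $t$ gives $\kappa_p^q\epsilon^{1-q/p}$. When $p\ge 2q$, one may instead use a sub-optimal moment $p'=\min\{p,\log(q/\epsilon)\}$, which yields the factor $c_{p,q}(\epsilon)$.

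I expect the main obstacle to be the truncated multi-vector product process for general $q$. There the dimension-free PAC-Bayes smoothing must handle a degree-$q$ polynomial in $\bu$, and the perturbation of $\bu$ interacts multiplicatively across the $q$ factors. My first attempt would be to smooth each $\bu_j$ independently by a Gaussian of variance $\asymp 1/\erank$, and to control the smoothing error through a telescoping sum over the factors, using $\kappa_{2q}$ and Hölder's inequality.
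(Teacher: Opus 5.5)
\textbf{Verdict: genuine gaps.} Your outer reduction matches the paper: directional trimmed means, minimax aggregation, plugging in $\widehat\erank$, and ``counting $\Rightarrow$ trimming $\approx$ truncation''. The problem is that the tools you name for the counting process would not give the stated rates.

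\emph{Counting process.} Bounded differences (McDiarmid) only gives fluctuations of order $\sqrt{\log(1/\delta)/n}$. Requiring $n Z_Q\le k/2$ would then force $k\gtrsim\sqrt{n\log(1/\delta)}$. This is incompatible with $k\asymp\erank(\bfSigma)\vee\log(4/\delta)\vee\epsilon n$ (take $\epsilon=0$, $\erank(\bfSigma)$ bounded, $n$ large), and it degrades the rate to roughly $(\log(1/\delta)/n)^{(1-q/p)/2}$. You need a variance-sensitive inequality. The paper uses Bousquet's inequality with $\bar\sigma^2\le\esp Z_Q$, which gives $Z_Q\le\frac72\esp Z_Q+\frac{4\log(1/\delta)}{3n}$.

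\emph{Counting expectation.} A Lipschitz bump with symmetrization and contraction adds $B^{-1}\sqrt{\tr(\bfSigma)/n}$. With $k\asymp\erank(\bfSigma)$, this is $\lesssim k/n$ only if $B\gtrsim\|\bfSigma\|^{1/2}\sqrt{n/k}$, a Chebyshev-level threshold. Then $kB^q/n\gtrsim\|\bfSigma\|^{q/2}(k/n)^{1-q/2}$, which does not vanish for $q\ge2$. Smoothing the bump in $\bu$ fails for the same reason: its smoothing error is of order $\gamma\,\esp\|\bx\|_2/B\approx(k/n)^{1/p}$.

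The missing idea is the one-sided domination $\unit_{\{|\langle\bx,\bv\rangle|>B\}}\le 2\,\Gamma_{\mathbf0,\gamma}\unit_{\{|\langle\bx,\bv+\btheta\rangle|>B\}}$. It has no smoothing error, and it is followed by the PAC-Bayesian \emph{Bernstein} bound (Proposition~\ref{prop:bernstein:smoothed}) with $\bar\sigma_\gamma^2\le\bar\mu_\gamma$. The complexity then enters only additively, as $(\gamma^{-2}+2)/n\asymp\erank(\bfSigma)/n\lesssim k/n$. The price is the Gaussian tail of $\langle\bx,\btheta\rangle\sim\gamma\|\bx\|_2 N$, which inflates the Markov threshold by $\sqrt{\min\{\sigma_p^2,\log(n/k)\}}$. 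Raised to the power $q$ with $n/k\asymp1/\epsilon$, this is where $c_{p,q}(\epsilon)$ comes from. It does not come from switching to a smaller moment $p'$, which would give $\kappa_{p'}^q\epsilon^{1-q/p'}$ and no factor $(p')^{q/2}$.

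\emph{Truncated process.} Contraction peeling does not apply directly here, because the multiplier $\prod_{j\neq1}\phi_t(\langle\bx_i,\bu_j\rangle)$ depends on directions that are also inside the supremum. Even granting it, it yields only a Lipschitz-constant bound of order $\|\bfSigma\|^{q/2}t^{q-1}\sqrt{\erank(\bfSigma)/n}$. That exceeds the target $\kappa_{2q}^q\|\bfSigma\|^{q/2}\sqrt{\erank(\bfSigma)/n}$ by a factor growing like $(n/k)^{(q-1)/p}$. Your variance bound never enters such an argument.

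What is needed is a Bernstein-type bound on $\esp\sup$ itself. The paper applies Proposition~\ref{prop:bernstein:smoothed} to the product-Gaussian-smoothed truncated product, with range $2AQ$ and variance $\lesssim Q^{2-r/q}\nu_r(\gamma)^r$ (Lemma~\ref{lemma:variance:auxiliary}). This gives $Q^{1-r/(2q)}\nu_r(\gamma)^{r/2}\sqrt{\erank(\bfSigma)/n}+Q\,\erank(\bfSigma)/n$, where the last term is absorbed into $kQ/n$. Taking $r=2q$, or $r=p$ together with Young's inequality, gives the two regimes.

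Your final idea of smoothing each direction independently with $\gamma^{-2}\asymp\erank(\bfSigma)$ is the right one. However, the smoothing error cannot be controlled by H\"older and $\kappa_{2q}$: $\gamma\|\bx\|_2$ is typically of size $\|\bfSigma\|^{1/2}$, the same as $\langle\bx,\bu\rangle$, so pointwise differences are not small. The error is small because independence and multilinearity give $\Gamma_{\bfU,\gamma}\prod_\ell\langle\bx,\btheta_\ell\rangle=\prod_\ell\langle\bx,\bu_\ell\rangle$ exactly. Hence on points with $\max_\ell|\langle\bx,\bu_\ell\rangle|\vee\gamma\|\bx\|_2\le\frac12Q^{1/q}$, only an exponentially small truncation residual remains (Lemma~\ref{lemma:normal:residual}, integrated via Lemma~\ref{lemma:exp:Q}). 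The remaining points are handled by the counting lemma at cost $O(Qk/n)$.
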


Theorem \ref{thm:main} is complemented by the following minimax lower bound under an $L^p$--$L^2$ hypercontractivity assumption. Its proof, given in Section \ref{ss:supplement:prop:optimal:hypercontractive} of the supplement, relies on standard lower-bound techniques \cite{2018Chen:Gao:Ren,2018Minsker:uniform,2025Minsker}; see also \cite{2021Lugosi:Mendelson,2026Abdalla:Zhivotovskiy} for related constructions.

\begin{proposition}[Lower bound]\label{prop:optimal:hypercontractive}
Let $p\ge q\ge1$, $\kappa>1$. Define 
\[
\calF_{p,\kappa}
:=
\left\{
P\in\calP(\re):
\esp_P|X|^2\ge1,
\{\esp_P|X|^p\}^{1/p}
\le
\kappa\{\esp_P|X|^2\}^{1/2}
\right\},
\]
where $\calP(\re)$ denotes the class of distributions on $\re$. Then there exist constant $c>0$ such that, for every $n\in\mathbb{N}$ and $\epsilon\in[0,1/2)$ such that 
$
\frac{\epsilon}{1-\epsilon}
\le 2^{-\frac{p}{q}}(\kappa^p-1),
$
\[
\inf_{\widehat T_q}
\sup_{P\in\calF_{p,\kappa}}
\sup_{Q\in\calP(\re)}
\prob_{\left((1-\epsilon)P+\epsilon Q\right)^{\otimes n}}
\left(
\left|
\widehat T_q-\esp_P|X|^q
\right|
\ge
\frac{1}{2}(\kappa^p-1)^{q/p}
\epsilon^{1-\frac{q}{p}}
\right)
\ge c,
\]
where the inf is over all estimators. 
\end{proposition}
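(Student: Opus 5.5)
We reduce the statement to a two-point testing problem in Huber's contamination model, which is the standard route for contamination lower bounds (Le Cam's method with mixtures indistinguishable in total variation). The aim is to construct $P_0,P_1\in\calF_{p,\kappa}$ and contaminations $Q_0,Q_1$ satisfying
\[
(1-\epsilon)P_0+\epsilon Q_0=(1-\epsilon)P_1+\epsilon Q_1
\]
and
\[
|\esp_{P_0}|X|^q-\esp_{P_1}|X|^q|\ge (\kappa^p-1)^{q/p}\epsilon^{1-q/p}.
\]
Once this holds, both hypotheses generate the same law of the sample. Hence for any estimator $\widehat T_q$, the events $\{|\widehat T_q-\esp_{P_j}|X|^q|\ge \tfrac12(\kappa^p-1)^{q/p}\epsilon^{1-q/p}\}$, $j=0,1$, cover the whole space by the triangle inequality. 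Their two probabilities, computed under this single law, therefore sum to at least $1$, so one of them is at least $1/2$. This gives the claim with $c=1/2$, uniformly in $n$.

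For the construction, we take $P_0=\tfrac12(\delta_{-1}+\delta_1)$, so that $\esp|X|^2=1$, $\esp|X|^p=1\le\kappa^p$, and $\esp|X|^q=1$. We take $P_1$ to be $P_0$ with an extra atom at $\pm t$ of mass $\eta$:
\[
P_1=(1-\eta)P_0+\tfrac{\eta}{2}(\delta_{-t}+\delta_t),\qquad t\ge1.
\]
To match the mixtures we set $\eta=\epsilon/(1-\epsilon)$, $Q_0=(1-\eta)\cdot$ nothing adjusted, namely $Q_1=P_0$ and $Q_0=\tfrac12(\delta_{-t}+\delta_t)$ rescaled appropriately. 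Concretely, $(1-\epsilon)P_1+\epsilon P_0$ and $(1-\epsilon)P_0+\epsilon Q_0$ agree exactly when $(1-\epsilon)\eta=\epsilon$ and $Q_0=\tfrac12(\delta_{-t}+\delta_t)$. We verify this identity directly, since both sides equal
\[
(1-\epsilon)(1-\eta)P_0+\epsilon P_0+\tfrac{\epsilon}{2}(\delta_{\pm t}).
\]

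The main step is choosing $t$ so that $P_1\in\calF_{p,\kappa}$ while the $q$-th moment gap is as large as possible. We have $\esp_{P_1}|X|^2=1-\eta+\eta t^2\ge1$ and $\esp_{P_1}|X|^p=1-\eta+\eta t^p$. A sufficient membership condition is $1-\eta+\eta t^p\le\kappa^p$, because $\kappa^p(\esp|X|^2)^{p/2}\ge\kappa^p$. This condition holds with $\eta t^p=\kappa^p-1$, i.e. $t=((\kappa^p-1)/\eta)^{1/p}$, and $t\ge1$ since $\eta\le\kappa^p-1$ under the hypothesis. The moment gap is then
\[
\eta(t^q-1)=(\kappa^p-1)^{q/p}\eta^{1-q/p}-\eta.
\]
The hypothesis $\eta\le 2^{-p/q}(\kappa^p-1)$ gives $t^q\ge2$. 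Hence $\eta(t^q-1)\ge\tfrac12\eta t^q=\tfrac12(\kappa^p-1)^{q/p}\eta^{1-q/p}$, and $\eta\ge\epsilon$. This yields a separation of at least $\tfrac12(\kappa^p-1)^{q/p}\epsilon^{1-q/p}$.

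This separation is a factor $2$ smaller than what the reduction in the first paragraph requires. We therefore apply that reduction at half the target threshold. Alternatively, we recover the factor by centering the two hypotheses symmetrically: we place the atom in both $P_0$ and $P_1$ with different weights, so that each $\esp_{P_j}|X|^q$ is at distance at least the target from the midpoint. Either way the constant adjustment is routine. The delicate point is only ensuring $t\ge1$ and $t^q\ge2$, which is exactly what the assumed bound on $\epsilon/(1-\epsilon)$ provides.
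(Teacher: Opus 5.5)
Your construction is the paper's, up to symmetrizing the atoms and a slightly smaller choice of $t$. It uses a base law with $|X|=1$, an extra atom of mass $\eta=\epsilon/(1-\epsilon)$ at $t$ with $\eta t^p\approx\kappa^p-1$, and contaminations that swap the atoms so the two Huber mixtures coincide exactly. Your checks are correct: membership in $\calF_{p,\kappa}$, the mixture identity, and the separation $\eta(t^q-1)\ge\frac12(\kappa^p-1)^{q/p}\epsilon^{1-q/p}$. This is exactly the separation the paper obtains.

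Write $s:=\frac12(\kappa^p-1)^{q/p}\epsilon^{1-q/p}$ for the threshold in the statement. You are right that your covering argument needs a gap of at least $2s$, not $s$; with a smaller gap, the estimator equal to the midpoint of the two targets defeats it. The paper's proof passes over this point in the same way when it invokes the ``standard two-point'' argument. But neither of your remedies proves the statement as posed:
\begin{itemize}
\item Running the reduction at $s/2$ gives the bound with $\frac14$ in place of $\frac12$, which is a strictly weaker claim.
\item ``Centering symmetrically'' is not a new mechanism. Requiring both targets to lie at distance at least $s$ from their midpoint is the same as requiring a gap of $2s$, so it presupposes a pair you have not exhibited.
\item A sharper estimate of your gap cannot recover the factor either. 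The quantity $(\kappa^p-1)^{q/p}\eta^{1-q/p}-\eta$ is strictly below $2s$ once $\epsilon$ is small.
\end{itemize}
So the ``routine'' adjustment is precisely the missing step.

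What closes it is that $\calF_{p,\kappa}$ is closed under dilations $X\mapsto\lambda X$ with $\lambda\ge1$. The constraint $\esp|X|^2\ge1$ is preserved, the ratio $\{\esp|X|^p\}^{1/p}/\{\esp|X|^2\}^{1/2}$ is scale-invariant, and $\esp|X|^q$ is multiplied by $\lambda^q$. Replace the atoms $\pm1,\pm t$ by $\pm2^{1/q},\pm2^{1/q}t$ throughout $P_0,P_1,Q_0,Q_1$. Both hypotheses stay in the class and the mixtures stay equal. The gap doubles, to at least $\eta t^q=(\kappa^p-1)^{q/p}\eta^{1-q/p}\ge2s$, and your covering argument then gives the statement with $c=\frac12$.

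One further edge case, also skipped by the paper: at $\epsilon=0$ your $t$ is undefined. For $q<p$ the threshold is then $0$ and the claim is trivial. For $q=p$ (reading $\epsilon^0=1$) the two laws can no longer coincide. You then need the usual $n$-dependent two-point argument with a dilated rare atom of mass of order $1/n$, so that the $n$-fold product laws are close in total variation.
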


Combining Theorem \ref{thm:main}, Proposition \ref{prop:optimal:hypercontractive}, and existing lower bounds yields near-minimax optimality (up to constants) whenever $p\ge2q$:
\begin{enumerate}
  \item By \cite{2024Zhivotovskiy,2025Al-Ghattas:Chen:Sanz-Alonso,2025Al-Ghattas:Chen:Sanz-Alonso:SharpConcentration}, the term $\|\bfSigma\|^{q/2}\sqrt{\frac{\erank(\bfSigma)\vee\log(1/\delta)}{n}}$ cannot be improved, up to the constant factor $\kappa_{2q}^q$. 
  \item By Proposition \ref{prop:optimal:hypercontractive}, the optimal dependence on the contamination level $\epsilon$ is of order
$
\epsilon\log^{q/2}\!\left(\nicefrac1\epsilon\right).
$
Our estimator therefore loses only by the dimension-free logarithmic factor
$
\log^{q/2}\!\left(\nicefrac1\epsilon\right).
$
In particular, for subgaussian distributions the contamination term in Theorem \ref{thm:main} is of order
$
\epsilon\log^{q}\!\left(\nicefrac1\epsilon\right),
$
whereas the minimax rate is
$
\epsilon\log^{q/2}\!\left(\nicefrac1\epsilon\right).
$
For distributions satisfying only a finite $p$-moment assumption, our estimator achieves the optimal rate up to absolute constants.
\end{enumerate}

In particular, Theorem \ref{thm:main} recovers robust covariance estimation ($q=2$) as a special case \cite{2026Abdalla:Zhivotovskiy,2024Oliveira:Rico}.

\paragraph{Our contributions.}
We develop the first dimension-free robust estimator of simple random tensors achieving near-optimal statistical rates under finite-moment assumptions and adversarial contamination. To the best of our knowledge, this is also the first robust estimator providing statistical guarantees throughout the weak-moment regime $2\vee q\le p\le2q$. The analysis is unified across all tensor orders and recovers the optimal results for covariance  estimation ($q=2$) as a special case \cite{2026Abdalla:Zhivotovskiy,2024Oliveira:Rico}.

From a probabilistic perspective, for any $q\in\mathbb{N}$, we establish concentration inequalities for higher-order counting and truncated empirical multi-vector product processes. These results extend the counting and truncated empirical-process framework recently developed for robust mean and covariance estimation to arbitrary tensor order. We expect these inequalities to have further applications in algorithmic robust estimation \cite{2019Diakonikolas:Kamath:Kane:Li:Moitra:Stewart,2020Diakonikolas:Kane:Pensia}. In Section~\ref{s:counting:truncated:processes}, we define the aforementioned higher-order counting and truncated empirical processes. Their concentration inequalities are stated and proved in Section~\ref{s:counting} for the counting process and in Section~\ref{s:truncation} for the truncated process.

From a methodological perspective, our estimator is fully adaptive: it requires neither knowledge of the moment parameter $p$ nor upper bounds on the hypercontractive constants, and avoids Lepski-type procedures and interval-intersection schemes.

\paragraph{The estimator.} Fix a \emph{trimming parameter} $k\in[n]$ and direction $\bu\in\re^d\setminus\{\mathbf{0}\}$. The \emph{trimmed mean} evaluation of the empirical $q$-order tensor at $\bu$ is
\begin{align}
\sfT_k(\tilde\bx_{1},\ldots,\tilde\bx_{n}\mid\bu)
:=
\frac{1}{n-2k}
\sum_{i=k+1}^{n-k}
\langle\tilde\bx,\bu\rangle_{(i)}^{\,q},
\qquad
1\le k<\frac n2,
\label{label:trimmed:mean:tensor:u}
\end{align}
where 
$
  \langle\tilde\bx,\bu\rangle_{(1)}^q
  \le \cdots
  \le \langle\tilde\bx,\bu\rangle_{(n)}^q
$
denote the ordered values of $\{\langle\tilde\bx_i,\bu\rangle^q\}_{i=1}^n$. 

Define the minimax directional aggregation estimator by
\begin{align}
    \widehat\bfT_k(\tilde\bx_{1},\ldots,\tilde\bx_{n}) \in \argmin_{\bfT\in\Sym^d_{q}(\re^d)}
    \sup_{\bu\in\mbS_2}\left|
      \langle\bfT,\bu^{\otimes q}\rangle - \sfT_k(\tilde\bx_{1},\ldots,\tilde\bx_{n}|\bu)
    \right|,
    \label{label:minimax:trimmed:estimator}
\end{align}
where $\mbS_2$ denotes the Euclidean sphere on $\re^d$. Our estimator is given by 
\begin{align}
  \widehat\bfT_*:=\widehat\bfT_{\widehat k} 
  \quad \mbox{ with }\quad 
  \widehat k := \left\lceil \max\Big\{3A_1\widehat\erank,~A_2\log\left(\nicefrac{4}{\delta}\right),~A_3(\epsilon n)\Big\}\right\rceil
  \wedge\left\lfloor\frac{n-1}{2}\right\rfloor.
  \label{label:minimax:trimmed:estimator:hat}
\end{align}
In above, $A_1,A_2,A_3\ge1$ are known constants depending only on the constant $C\ge C_{\probn}$ stated in Theorem \ref{thm:main}. The existence of such $C$ ensures that $1\le\hat k<n/2$ as necessary. See Section \ref{ss:thm:main:proof} for the proof of Theorem \ref{thm:main} and the details.

\paragraph{Overview of the proof.} The proof of Theorem \ref{thm:main} is inspired, with proper adaptations, by the method developed in \cite{2024Oliveira:Rico} tailored to robust covariance estimation. An important initial step is to relate the trimmed mean with the \emph{truncated mean}. 
Given a \emph{truncation threshold} $Q>0$, define the truncation function
\[
  \psi_Q(t) := \max\{\min\{t,Q\},-Q\}. 
\]
The truncated mean analog of \eqref{label:trimmed:mean:tensor:u} is 
\begin{align}
  \sfT_Q(\tilde\bx_1,\ldots,\tilde\bx_{n}|\bu) &:= \frac{1}{n}\sum_{i=1}^n\psi_Q(\langle\tilde\bx_{i},\bu\rangle^q).\label{label:truncated:mean:tensor:u}
\end{align}
The fact that trimmed and truncated means can be uniformly approximated over any direction follows from the so called \emph{counting condition}:
\begin{align}
  \#\{i\in[n]:|\langle\tilde\bx_i,\bu\rangle|\ge Q^{1/q}\} \le k,\quad\forall\bu\in\mbS_2. 
\end{align}
(See Lemma \ref{lemma:trim:trunc} in Section \ref{s:robust:estimator}).

The proof relies on two concentration results. First, we establish the counting condition with high probability. Second, we derive a uniform concentration inequality for the truncated process. Both follow from a common strategy: Gaussian smoothing, a PAC-Bayesian Bernstein inequality for the smoothed process and ad hoc Gaussian residual estimates to bound the expectation of the process. Concentration follows from Bousquet's inequality.

\begin{remark}[Computational considerations]
The estimator $\widehat\bfT_k$ is not computationally efficient. As in \cite{2024Oliveira:Rico}, one may replace $\mbS_2$ in \eqref{label:minimax:trimmed:estimator} by a $\frac{1}{2q}$-net $\calN$, defining the estimator $\widehat\bfT_{\calN,k}$. A standard net argument yields
\[
\|\widehat\bfT_{\calN,k}-\esp[\bx^{\otimes q}]\|
\lesssim
2\sup_{\bu\in\calN}
\left|
\langle\bfT,\bu^{\otimes q}\rangle
-
\sfT_k(\tilde\bx_{1},\ldots,\tilde\bx_n|\bu)
\right|,
\]
which is sufficient for our analysis (see Section~\ref{s:robust:estimator}). The optimization problem defining $\widehat\bfT_{\calN,k}$ has a convex objective. Nevertheless, this approximation remains computationally intractable in general, since $|\calN|$ is exponential in $d$. In addition, optimization over symmetric tensors is already difficult for $q>2$ without additional structural assumptions.
\end{remark}

\subsection{Comparison with prior robust tensor estimators}\label{ss:related:work}

To our knowledge, \cite{2021Mendelson,2026Bartl:Mendelson} are the only previous works directly addressing robust estimation of simple tensors.

Under an $L^{p}\!-\!L^{q}$ hypercontractivity assumption $(p\ge2q)$ and i.i.d.\ sampling, Mendelson \cite{2021Mendelson} showed that one-dimensional moments of arbitrary order $q\ge1$ can be robustly estimated uniformly over all directions, with error of order
$
\sqrt{\left(\nicefrac{d}{n}\right)\log\!\left(\nicefrac{n}{d}\right)},
$
using trimmed means along one-dimensional marginals. The proof extends naturally to more general one-dimensional functions than $t\mapsto|t|^q$.
His work thereby extends the empirical tensor approximation approach of, e.g., \cite{2007Guedon:Rudelson, 2008Mendelson, 2010Adamczak:Litvak:Pajor:Tomczak-Jaegermann, 2011Vershynin, 2012Vershynin, 2013Srivastava:Vershynin} to heavy-tailed distributions. Compared with our results, the statistical rate incurs an additional logarithmic factor, is not dimension-free, and does not address adversarial contamination.

Bartl and Mendelson \cite{2026Bartl:Mendelson} subsequently proposed a robust estimator for simple tensors of arbitrary order under the assumption $\kappa_4<\infty$. Under i.i.d.\ sampling, their estimator attains the optimal dimension-free statistical rate and applies to a broader class of one-dimensional functionals, including non-integer powers $q\ge1$. Their approach, however, assumes knowledge of the $L^2$ norm induced by the distribution (Assumption~1.3 in \cite{2026Bartl:Mendelson}), which, for simple tensors, amounts to knowing the pseudo-norm $\|\bfSigma^{1/2}(\cdot)\|_2$.

Under adversarial contamination, however, their optimal corruption rate is established only under fourth-moment assumptions. Their construction combines generic chaining through an admissible sequence of nets with robust one-dimensional mean estimation, and whether this framework extends to the optimal corruption rates for $p>4$ remains open. In contrast, our estimator attains the optimal corruption rate for every $p\ge2q$, continues to provide convergent guarantees throughout the weak-moment regime $q\le p\le2q$, and relies only on directional trimmed means and minimax aggregation.

\subsection{Additional related work}

This section reviews related work not directly concerned with robust simple tensor estimation.

\paragraph{Robust covariance estimation.} Various robust covariance estimators with heavier tails have been proposed, including \cite{2018Minsker, 2018Tikhomirov, 2020Minsker:Wei, 2019Ostrovskii:Rudi}. Prior work also consider the Gaussian model with contamination \cite{2018Chen:Gao:Ren}. For recent advances we refer to, e.g., \cite{2018Catoni:Giulini, 2020Mendelson:Zhivotovskiy,2026Abdalla:Zhivotovskiy, 2024Oliveira:Rico}. Under bounded fourth moments, \cite{2018Giulini, 2018Catoni:Giulini, 2020Mendelson:Zhivotovskiy} obtain near-optimal rates up to logarithmic factors in $\erank(\bfSigma)$. The logarithmic factor is removed in \cite{2018Catoni:Giulini} under additional distributional knowledge and independently in \cite{2026Abdalla:Zhivotovskiy,2024Oliveira:Rico} by different techniques. Our trimmed-mean estimator is inspired by recent developments in robust mean \cite{2021Lugosi:Mendelson} and covariance estimation \cite{2026Abdalla:Zhivotovskiy,2024Oliveira:Rico}. More specifically, it extends, with the necessary adaptations, the robust covariance estimator of \cite{2024Oliveira:Rico} to arbitrary tensor order.

Probabilistically, our analysis extends the counting and truncated quadratic processes of \cite{2024Oliveira:Rico} to higher-order counting and truncated empirical multi-vector product processes. The proof relies on Gaussian smoothing with a product Gaussian measure and exploits coordinate independence. Compared with \cite{2024Oliveira:Rico}, our proof admits simplifications. We apply the PAC-Bayesian method only to bound the expectation of the smoothed processes, while concentration follows from Bousquet's inequality. Moreover, the Gaussian smoothing residual is controlled through a sharp, moment-independent bound based on the Lambert \(W\)-function, yielding a substantially smaller truncation threshold and consequently weaker moment requirements in Assumption~\ref{assump:trace:estimator}.\footnote{See Lemma \ref{lemma:exp:Q}. In \cite{2024Oliveira:Rico}, the residual is bounded using moment-dependent estimates based on the Gamma function.}

\paragraph{Subgaussian tensor estimation.}
Alternative proofs and sharpness discussions for subgaussian covariance estimation can be found in \cite{2017Liaw:Mehrabian:Plan:Vershynin,2017vanHandel,2022Jeong:Li:Plan:Yilmaz}. Notably, Bednorz and Dirksen independently extended Talagrand's generic chaining method to subgaussian quadratic processes in \cite{2014Bednorz,2015Dirksen}. For completeness, we also refer to \cite{2016Mendelson,2017Mendelson} for an earlier work on the extension of the generic chaining method for tails heavier than the subgaussian.  

Concentration inequalities for the empirical tensor process $\epsilon_q$ have been extensively studied in the subgaussian setting \cite{2007Guedon:Rudelson,2008Mendelson,2010Adamczak:Litvak:Pajor:Tomczak-Jaegermann,2011Vershynin}; see also \cite{2021Even:Massoulie}. Related concentration inequalities for more general random tensors and multilinear forms are developed in \cite{2020Vershynin,2021Gotze:Holger:Sambale:Sinulis,2023Sambale,2026Chen:Sanz-Alonso:SharpConcentrationAsymmetry}.

Recent work has substantially sharpened the dimension-free theory. Zhivotovskiy \cite{2024Zhivotovskiy} obtained the first effective-rank bounds for empirical tensors, achieving the rate
$
\|\bfSigma\|^{q/2}(\nicefrac{\erank(\bfSigma)}{n})^{1/2}
$
in the regime $n\gtrsim\erank^{\,q-1}(\bfSigma)$, up to a likely logarithmic factor in expectation and exactly in high probability. This was sharpened to the optimal rate for all sample sizes by \cite{2025Al-Ghattas:Chen:Sanz-Alonso:SharpConcentration,2025Al-Ghattas:Chen:Sanz-Alonso} using generic chaining, later simplified in \cite{2025Abdalla:Vershynin} and extended to $1<q<2$ in \cite{2025Shang}. Moreover, \cite{2025Al-Ghattas:Chen:Sanz-Alonso} shows that, for Gaussian distributions, an Isserlis plug-in estimator strictly outperforms the empirical tensor and is statistically optimal.

\subsection{Organization} Section \ref{s:preliminaries} introduces notation and preliminary results. In Section~\ref{s:counting:truncated:processes}, we define the aforementioned higher-order counting and truncated empirical processes. Their concentration inequalities are stated and proved in Section~\ref{s:counting} for the counting process and in Section~\ref{s:truncation} for the truncated process. Section \ref{s:robust:estimator} proves the main theorem. Technical lemmas, omitted proofs of some auxiliary results, and a proof dependency diagram are deferred to the Supplementary Material (Section~\ref{s:supplement}); see in particular Section~\ref{ss:proof:diagram} for the proof diagram.

\section{Preliminaries}\label{s:preliminaries}

\paragraph{Additional notation.} Throughout the paper, $C,c>0$ denote absolute constants that may change within the text. Given $Q>0$, we define the function:
$$
\calI_Q(t):=\unit_{|t|>Q}.
$$
For any $s\ge1$,
$
  \nu_s := \sup_{\bv\in\mbS_2}(\esp[|\langle\bx,\bv\rangle|^s])^{\frac{1}{s}}.
$
Note that $\nu_s \le \|\bfSigma\|^{1/2}\kappa_s$.

We use the notation $\bz_{1:n}:=\{\bz_1,\ldots,\bz_n\}$ for a finite sequence. $a\vee b:=\max\{a,b\}$ and $a\wedge b:=\min\{a,b\}$. $\mbS_2$ and $\mbB_2$ denote, respectively, the Euclidean unit sphere and ball in $\re^d$. $\probn_n$ and $\espn_n$ denote, respectively, the empirical distribution and expectation for $\bx_1,\ldots,\bx_n$.

\paragraph{PAC-Bayesian Bernstein inequality.}
We recall the inequality established in \cite{2024Oliveira:Rico}.

When $m,q\in\mathbb N$ are fixed, we reserve the notation
$
\bfU:=\{\bu_{j,\ell}\}_{j\in[m],\,\ell\in[q]},
$
and 
$
\bfTheta:=\{\btheta_{j,\ell}\}_{j\in[m],\,\ell\in[q]},
$
for collections of vectors in $\re^d$. We equip the product space
$(\re^d)^{m\times q}$ with the Euclidean product norm
$
\|\bfU\|_2
:=
(
\sum_{j=1}^m
\sum_{\ell=1}^q
\|\bu_{j,\ell}\|_2^2
)^{1/2},
$
and denote its unit ball by $\mbB_2^{m\times q}$.

A probability space $(\Omega,\calF,\prob)$ is implicit in our discussion. Consider a family of functions
\[
(\bfTheta,\omega)
\in
(\re^d)^{m\times q}\times\Omega
\longmapsto
X_i(\bfTheta,\omega)\in\re, 
\]
that are $\calB\left((\re^d)^{m\times q}\right)\otimes \calF/\calB(\re)$-measurable. We write $X_i(\bfTheta)$ for the measurable function
$\omega\mapsto X_i(\bfTheta,\omega)$.
For every fixed $\bfTheta\in(\re^d)^{m\times q}$, we assume
the random variables
$\{X_i(\bfTheta)\}_{i\in[n]}$
are i.i.d. and integrable.
We suppress the dependence on $\omega$ throughout.

Let $\Gamma_{\bv,\gamma}$
denote the Gaussian measure on $\re^d$
with mean $\bv$ and covariance
$\gamma^2\bfI_d$ for some $\gamma>0$. 
For $\bfU\in(\re^d)^{m\times q}$, define the product Gaussian measure
$
\Gamma_{\bfU,\gamma}
:=
\bigotimes_{j=1}^m
\bigotimes_{\ell=1}^q
\Gamma_{\bu_{j,\ell},\gamma}.
$
In what follows, we assume the integrals: 
\[
\Gamma_{\bfU,\gamma}X_i(\bfTheta,\omega)
:=
\int_{(\re^d)^{m\times q}}
X_i(\bfTheta,\omega)\,
\Gamma_{\bfU,\gamma}(d\bfTheta),
\]
are well-defined for all $\bfU\in(\re^d)^{m\times q}$ and depend continuously on $\bfU$. Again, we often have the dependence on $\omega$ implicit in our notation; observe, however, that under our assumptions the maps $(\bfU,\omega)\mapsto\bfGamma_{\bfU,\gamma}X_i(\bfTheta,\omega)$ are also $\calB\left((\re^d)^{m\times q}\right)\otimes \calF/\calB(\re)$-measurable.

The next proposition follows by integrating the tail bound in Proposition 2.2 of \cite{2024Oliveira:Rico}, applied to the product space $(\re^d)^{m\times q}$.
\begin{proposition}[Proposition 2.2 of \cite{2024Oliveira:Rico}]\label{prop:bernstein:smoothed}
Suppose there exists $\bar A>0$ such that
$
X_i(\bfTheta)-\esp[X_i(\bfTheta)]
\le \bar A
$
almost surely for every
$i\in[n]$
and
$\bfTheta\in(\re^d)^{m\times q}$.
Suppose also that
$
\bar\mu_\gamma
:=
\sup_{\bfU\in\mbB_2^{m\times q}}
\Gamma_{\bfU,\gamma}
\esp[X_1(\bfTheta)]
$
and
$
\bar\sigma_\gamma^2
:=
\sup_{\bfU\in\mbB_2^{m\times q}}
\Gamma_{\bfU,\gamma}
\var\!\left(X_1(\bfTheta)\right)
$
are finite. Then
\[
\esp\left[
\sup_{\bfU\in\mbB_2^{m\times q}}
\frac1n
\sum_{i=1}^n
\Gamma_{\bfU,\gamma}
X_i(\bfTheta)
\right]
\le
\bar\mu_\gamma
+
\bar\sigma_\gamma
\sqrt{\frac{\gamma^{-2}+2}{n}}
+
\bar A\,
\frac{\gamma^{-2}+2}{6n}.
\]

In addition, suppose that 
$
  X_i(\bfTheta)-\esp[X_i(\bfTheta)] \ge -\bar A
$
almost surely for every
$i\in[n]$
and
$\bfTheta\in(\re^d)^{m\times q}$. Then
\[
\esp\left[
\sup_{\bfU\in\mbB_2^{m\times q}}\left|
\frac1n
\sum_{i=1}^n
\Gamma_{\bfU,\gamma}
X_i(\bfTheta)
\right|
\right]
\le
\bar\mu_\gamma
+
\bar\sigma_\gamma
\sqrt{\frac{\gamma^{-2}+2(1+\log2)}{n}}
+
\bar A\,
\frac{\gamma^{-2}+2(1+\log2)}{6n}.
\]
\end{proposition}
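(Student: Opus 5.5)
The plan is to derive the expectation bound from the high-probability version of the PAC-Bayesian Bernstein inequality, Proposition 2.2 of \cite{2024Oliveira:Rico}. I would apply it on the product space $(\re^d)^{m\times q}$ with the Euclidean product norm, and then integrate the tail. Since $\Gamma_{\bfU,\gamma}$ is a product of isotropic Gaussians, it is itself the Gaussian measure on $(\re^d)^{m\times q}\cong\re^{dmq}$ with mean $\bfU$ and covariance $\gamma^2\bfI$. The reference measure is $\Gamma_{\mathbf 0,\gamma}$, and the Kullback--Leibler divergence satisfies
\[
\mathrm{KL}(\Gamma_{\bfU,\gamma}\,\|\,\Gamma_{\mathbf 0,\gamma})=\frac{\|\bfU\|_2^2}{2\gamma^2}\le\frac{\gamma^{-2}}{2}
\qquad\text{for all }\bfU\in\mbB_2^{m\times q}.
\]
So the dimension $dmq$ never enters. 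The Bernstein-type PAC-Bayes tail bound, which uses the one-sided bound $X_i-\esp X_i\le\bar A$ and the smoothed variance $\bar\sigma_\gamma^2$, should then give the following for every $t>0$, with probability at least $1-e^{-t}$:
\[
Z:=\sup_{\bfU\in\mbB_2^{m\times q}}\frac1n\sum_{i=1}^n\Gamma_{\bfU,\gamma}X_i(\bfTheta)\le f(t):=\bar\mu_\gamma+\bar\sigma_\gamma\sqrt{\frac{2(\gamma^{-2}/2+t)}{n}}+\bar A\,\frac{\gamma^{-2}/2+t}{3n}.
\]
Here the mean term $\Gamma_{\bfU,\gamma}\esp X_1(\bfTheta)$ has been bounded by $\bar\mu_\gamma$. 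Measurability of $Z$ is not an issue: the maps $\bfU\mapsto\Gamma_{\bfU,\gamma}X_i$ are continuous, so the supremum can be taken over a countable dense subset of the ball.

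For the first claim I would integrate this tail bound. The function $f$ is continuous, increasing and concave on $[0,\infty)$. Define $T:=\inf\{t\ge0: Z\le f(t)\}$. The tail bound gives $\prob(T>t)\le e^{-t}$, so $T$ is stochastically dominated by a standard exponential variable $E$, and $Z\le f(T)$ holds almost surely. Since $f$ is increasing, $\esp f(T)\le\esp f(E)$, and Jensen's inequality for the concave $f$ gives $\esp f(E)\le f(\esp E)=f(1)$. It remains to evaluate $f(1)$:
\[
2\Big(\frac{\gamma^{-2}}{2}+1\Big)=\gamma^{-2}+2,
\qquad
\frac{\gamma^{-2}/2+1}{3n}=\frac{\gamma^{-2}+2}{6n}.
\]
This is exactly the first displayed bound.

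For the two-sided claim I would apply the same tail bound to $-X_i$, which is allowed under the additional lower bound $X_i-\esp X_i\ge-\bar A$. The mean term is then $\sup_{\bfU}\Gamma_{\bfU,\gamma}\esp[-X_1]$, and the variance proxy is unchanged. I am reading $\bar\mu_\gamma$ as controlling the absolute value of the smoothed mean; this is the one point to check against the intended definition, and it is automatic when $\esp X_i(\bfTheta)=0$ or when $\bar\mu_\gamma$ is read as a bound on its absolute value. A union bound over the two signs gives
\[
\sup_{\bfU}\Big|\frac1n\sum_{i=1}^n\Gamma_{\bfU,\gamma}X_i(\bfTheta)\Big|\le f(t)\quad\text{with probability at least }1-2e^{-t}=1-e^{-(t-\log 2)}.
\]
Repeating the integration argument with $T$ now dominated by $E+\log2$ yields the bound $f(1+\log 2)$. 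Substituting $t=1+\log2$ gives $\gamma^{-2}+2(1+\log2)$ in both the square-root term and the linear term, matching the second display. No step here is hard. The only points needing care are applying the cited tail bound with the correct product-space KL divergence, and the sign and absolute-value convention for $\bar\mu_\gamma$ in the two-sided case.
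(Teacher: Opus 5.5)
Your proposal is correct and follows exactly the route the paper indicates. You apply the high-probability PAC-Bayesian Bernstein bound of Oliveira--Rico on $(\re^d)^{m\times q}$ with $\mathrm{KL}\le\gamma^{-2}/2$ and then integrate the tail; your Jensen step on the concave $f$ at $t=1$ (resp.\ $t=1+\log 2$) reproduces the stated constants exactly.

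Your caveat on the two-sided bound is also well founded. With $\bar\mu_\gamma$ defined without an absolute value, the literal statement fails (for example, take $X_i\equiv -c$). It needs $\bar\mu_\gamma\ge\sup_{\bfU}|\Gamma_{\bfU,\gamma}\esp X_1(\bfTheta)|$, which does hold in the paper's only use of it: there the $X_i$ are centered, so $\bar\mu_\gamma=0$.
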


Finally, we recall Bousquet's version of Talagrand's inequality for empirical processes \cite{2002Bousquet}; see also Theorem 12.5 and Corollary 12.2 in \cite{2013Boucheron:Lugosi:Massart}. We state only its Bernstein-type simplification. Its original result is stated for countable classes, but it extends directly to separable classes. See, e.g., page 315 in \cite{2013Boucheron:Lugosi:Massart} for the definition of a separable class and a related discussion. The function classes used in this work are separable.

\begin{theorem}[Bousquet's inequality]
\label{thm:bousquet}
Let $X_1,\ldots,X_n$ be i.i.d.\ random variables taking values in a measurable space $\mathcal X$, and let $\mathcal F$ be a separable class of measurable functions $f:\mathcal X\to\mathbb R$ such that, for every $f\in\mathcal F$,
$
  \esp[f(X_1)]=0
$
and
$
  f(X_1)\le1
$
almost surely. Assume furthermore that
$
    \bar\mu
    :=
    \esp\left[
        \sup_{f\in\mathcal F}
        \frac1n\sum_{i=1}^n f(X_i)
    \right]
    <\infty
$
and
$
    \bar\sigma^2
    :=
    \sup_{f\in\mathcal F}
    \frac1n\sum_{i=1}^n\esp[f(X_i)^2]
    <\infty.
$
Then, for every $t\ge0$, with probability at least $1-e^{-t}$,
$$
    \sup_{f\in\mathcal F}
    \frac1n\sum_{i=1}^n f(X_i)
    \le
    \bar\mu
    +
    \sqrt{\frac{2(2\bar\mu+\bar\sigma^2)t}{n}}
    +
    \frac{t}{3n}.
$$
\end{theorem}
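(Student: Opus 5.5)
The plan is to work with the unnormalized supremum $Z:=\sup_{f\in\mathcal F}\sum_{i=1}^n f(X_i)$ and prove Bousquet's moment generating function bound
\[
\log\esp\big[e^{\lambda(Z-\esp Z)}\big]\le v\,\phi(\lambda),\qquad \phi(\lambda):=e^\lambda-\lambda-1,\quad v:=2\esp Z+n\bar\sigma^2,
\]
for all $\lambda\ge0$. Since $\esp Z=n\bar\mu$, the Cramér--Chernoff method then gives, with probability at least $1-e^{-t}$,
\[
Z\le\esp Z+\sqrt{2vt}+t/3.
\]
Here I use the standard bound $v\phi^{*}{}^{-1}(t/v)\le\sqrt{2vt}+t/3$ for the inverse of the Bennett-type Legendre transform. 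Dividing by $n$, and noting that $v/n^2=(2\bar\mu+\bar\sigma^2)/n$, gives exactly the displayed Bernstein-type statement. First I reduce to a finite class $\{f_1,\ldots,f_N\}$. Separability gives a countable subclass whose supremum equals the supremum over $\mathcal F$ almost surely, and this subclass is an increasing union of finite classes. Monotone convergence then transfers both $\esp Z$ and the tail bound to the limit, while $\bar\sigma^2$ can only decrease along subclasses.

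For finite $\mathcal F$, I run the entropy method. Let $Z_k:=\sup_f\sum_{i\ne k}f(X_i)$, and let $f^{(k)}$ be a maximizer for $Z_k$, chosen measurably as a function of the sample without $X_k$. Let $\hat f$ be a maximizer for $Z$. These choices give the sandwich
\[
f^{(k)}(X_k)\le Z-Z_k\le \hat f(X_k)\le 1 .
\]
Set $Y_k:=Z-Z_k$. Summing the upper bound gives $\sum_k Y_k\le Z$, since $\sum_k\hat f(X_k)=Z$. The modified logarithmic Sobolev inequality, obtained from tensorization of entropy together with the variational bound $\mathrm{Ent}(e^{\lambda Z})\le\sum_k\esp[e^{\lambda Z}\,g(-\lambda Y_k)]$ where $g(x)=e^x-x-1$ after conditioning, yields a differential inequality for $F(\lambda)=\esp e^{\lambda Z}$. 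I then use the estimate $g(-\lambda y)\le y^2 g(-\lambda)$ for $y\le1$, valid when $y\ge0$. The lower bound $Y_k\ge f^{(k)}(X_k)$, combined with $\esp[f^{(k)}(X_k)\mid X_{-k}]=0$, is used to control the negative parts of $Y_k$ and the centering.

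The main obstacle is Bousquet's sharp step, which gets the constant $2\esp Z+n\bar\sigma^2$ rather than Talagrand's cruder constants. Bousquet's device is to exploit $\sum_k Y_k\le Z$ and $Y_k\le1$ in the form
\[
\sum_k Y_k^2\le \sum_k Y_k+\sum_k\big(f^{(k)}(X_k)\big)^2\ \text{(in the appropriate conditional sense)}.
\]
He then compares the resulting inequality $\lambda F'-F\log F\le \phi(-\lambda)\,\esp[e^{\lambda Z}(Z+n\bar\sigma^2)]$ with the ODE solved by $\exp(v\phi(\lambda))$, via a Herbst-type argument on $\lambda^{-1}\log F$. I would follow Bousquet's proof, as presented in Theorem 12.5 of \cite{2013Boucheron:Lugosi:Massart}. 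I would first verify the differential inequality for $G(\lambda)=\log\esp e^{\lambda(Z-\esp Z)}$, namely
\[
(\lambda-\phi(\lambda))G'(\lambda)-G(\lambda)\le v\phi(\lambda),
\]
and then integrate it using $G(0)=G'(0)=0$ to conclude $G\le v\phi$. The final Bernstein simplification is then routine.
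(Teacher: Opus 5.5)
At the top level you do what the paper does. The paper gives no proof of its own: it invokes Bousquet's inequality (Theorem 12.5 and Corollary 12.2 of \cite{2013Boucheron:Lugosi:Massart}) for $Z=\sup_f\sum_i f(X_i)$ with $v=2\esp Z+n\bar\sigma^2$, and remarks that the countable case extends to separable classes. Your rescaling ($\esp Z=n\bar\mu$, $v/n^2=(2\bar\mu+\bar\sigma^2)/n$) is correct. So is the bound $v\,h^{-1}(t/v)\le\sqrt{2vt}+t/3$ with $h(u)=(1+u)\log(1+u)-u$, and so is the finite-class/monotone-convergence reduction. Citing Theorem 12.5 therefore already finishes the job.

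The entropy-method sketch you add, however, would not go through as written.

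(a) The estimate $\phi(-\lambda y)\le y^2\phi(-\lambda)$ for $y\in[0,1]$ is false. Since $u\mapsto\phi(u)/u^2$ is increasing, for $y\le 1$ the reverse inequality $\phi(-\lambda y)\ge y^2\phi(-\lambda)$ holds; for $\lambda=1$, $y=\tfrac12$ one gets $0.1065>0.0920$. The quadratic Bennett bound holds for $\phi(+\lambda y)$. For $\phi(-\lambda y)$ on $[0,1]$, convexity gives only the linear bound $y\,\phi(-\lambda)$. Negative increments must be treated through $e^{\lambda Z}\phi(-\lambda Y_k)=e^{\lambda Z_k}\bigl[1+(\lambda Y_k-1)e^{\lambda Y_k}\bigr]$.

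(b) The differential inequality must read $(\lambda-\phi(-\lambda))G'-G\le v\,\phi(-\lambda)$. Your version looks like Bousquet's $\psi(\lambda)=e^{-\lambda}-1+\lambda$ transcribed into the $\phi$ notation. In the correct form, $v\phi(\lambda)$ is an exact solution and the coefficient $\lambda-\phi(-\lambda)=1-e^{-\lambda}$ is positive for every $\lambda>0$. In your form, $v\phi(\lambda)$ is not a solution, and $\lambda-\phi(\lambda)<0$ once $\lambda\gtrsim 1.26$. So no upper bound on $G$ follows for large $\lambda$, which is exactly the regime the Bennett/Bernstein tail needs when $t\gg v$.

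(c) Your intermediate inequality $\lambda F'-F\log F\le\phi(-\lambda)\esp[e^{\lambda Z}(Z+n\bar\sigma^2)]$ integrates to $v=\esp Z+n\bar\sigma^2$. That is not the $v$ you defined, and it is not what Bousquet's argument delivers. The right-hand side needs an additional term $\phi(-\lambda)\,\esp Z\,\esp e^{\lambda Z}$. Correspondingly, a valid pointwise relation is $Y_k^2\le f^{(k)}(X_k)^2+2\bigl(Y_k-f^{(k)}(X_k)\bigr)$, which holds because $Y_k+f^{(k)}(X_k)\le 2$. It carries a factor $2$, not the $1$ in your display, consistent with the $2\esp Z$ in $v$.

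Either cite the theorem, as the paper does, or repair these three steps.
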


\section{Higher-order counting and truncated processes}\label{s:counting:truncated:processes}

Although motivated by simple random tensor estimation, our methods extend almost verbatim to a slightly broader class of empirical multi-vector processes. Specifically, for any $m,q\in\mathbb{N}$ and $t>0$, we establish concentration inequalities for the \emph{counting process} 
\begin{align}
    Z_t := \sup_{\bfU\in\mbB_2^{m\times q}}
      \frac{1}{n}\sum_{i=1}^n
    \calI_t\left(
      \sum_{j=1}^{m}
      \prod_{\ell=1}^{q}
      \langle \bx_i,\bu_{j,\ell}\rangle
    \right),\label{equation:counting:process}
\end{align} 
and the \emph{truncated process} 
\begin{align}
    \epsilon_t := \sup_{\bfU\in\mbB_2^{m\times q}}
    \left|
      \frac1n\sum_{i=1}^n
      \psi_t\!\left(
      \sum_{j=1}^{m}
      \prod_{\ell=1}^{q}
      \langle\bx_i,\bu_{j,\ell}\rangle
      \right)
      -
      \esp\!\left[
      \psi_t\!\left(
      \sum_{j=1}^{m}
      \prod_{\ell=1}^{q}
      \langle\bx,\bu_{j,\ell}\rangle
      \right)
      \right]
    \right|.
    \label{equation:truncated:process}
\end{align}
We also define the \emph{truncation bias}
\begin{align}
  \calT_t := \sup_{\bfU\in\mbB_2^{m\times q}}\left|
    \esp\left[
      \left(
      \sum_{j=1}^{m}
      \prod_{\ell=1}^{q}
      \langle\bx,\bu_{j,\ell}\rangle
      \right)
      -\psi_{t}\!\left(
      \sum_{j=1}^{m}
      \prod_{\ell=1}^{q}
      \langle\bx,\bu_{j,\ell}\rangle
      \right)\right]
    \right|.
  \label{equation:truncation:bias}
\end{align}

The concentration inequalities of the counting and truncated processes are presented, respectively, in Sections \ref{s:counting} and \ref{s:truncation} in the following. 

\begin{remark}
  To prove Theorem \ref{thm:main}, we will only need $m=1$. For this theorem there is also an alternative proof with identical directions $\bu_{j,\ell}\equiv\bu$. We work with $m>1$ and multi-directions $\bu_{j,\ell}$ because we believe these inequalities are useful in other contexts. The proof with multi-directions is also simpler since it exploits coordinate independence in the Gaussian smoothing computations. 
\end{remark}

\begin{remark}\label{rem:logical:constants}
Throughout Sections~\ref{s:counting} and \ref{s:truncation}, we assume that $q\in\mathbb{N}$, 
$p\ge2$ with $\kappa_p<\infty$, $r\in[q,2q]$ with $\kappa_r<\infty$,
$k\in[n]$, and $c_1\in(0,1/e)$ satisfies
$
\sqrt{e}\,\frac{c_1k}{n}\le1,
$
and that $A\ge1$ and $\gamma,c_2,c_3>0$ are fixed.
\end{remark}

\section{Counting \& related arguments}\label{s:counting}

The main goal of this section is to establish the concentration inequality for the counting process, presented in Section~\ref{ss:counting:process}. To this end, we also establish two auxiliary results: a counting lemma (Lemma~\ref{lemma:count:linear:v2}) and a lemma controlling a smoothing residual (Lemma~\ref{lemma:exp:Q}), both presented in Section~\ref{ss:counting:directions}. These two lemmas are also used to establish concentration of the truncated process in Section~\ref{s:truncation}.

\subsection{Counting lemma in expectation \& smoothing residual}\label{ss:counting:directions}

Given any $\bz\in\re^d$, we define the quantity 
$$
  \|\bfU\|_{\infty}(\bz) := \max_{j\in[m],\,\ell\in[q]}
    |\langle\bz,\bu_{j,\ell}\rangle|. 
$$
Fix $B>0$. In this section, we bound 
$$
  \esp\left[\sup_{\bfU\in\mbB_2^{m\times q}}
  \frac1n\sum_{i=1}^n
  \unit_{\{
    \|\bfU\|_{\infty}(\bx_i)
    >B
  \}}\right].
$$
This is the content of Lemma \ref{lemma:count:linear:v2} in the following. 

Let 
$
  \sigma_p := (\esp|N|^p)^{1/p}
$
denote the $L_p$-norm of a standard normal variable $N\sim\calN(0,1)$. It is well known that $\sigma_p\asymp\sqrt p$.
\begin{lemma}[Counting lemma in expectation; proof in \S \ref{proof:lemma:count:linear:v2}]
  \label{lemma:count:linear:v2}
  Suppose
  \begin{align}
    \frac{B}{2} &\ge 
    \left(
      \nu_p^p\frac{n}{c_1k}
    \right)^{\frac{1}{p}}
    \bigvee 
    \left(\gamma \tr^{1/2}(\bfSigma)
      \left(
        \kappa_p^p\frac{n}{c_1k}
      \right)^{\frac1p}
      \sqrt{\min\left\{\sigma_p^2,
      4 \log\left( \frac{n}{c_1k} \right)
    \right\}}\right). \label{lemma:count:linear:v2:condition}
  \end{align}
  Then
  $$
      \esp\left[
        \sup_{\bfU\in\mbB_2^{m\times q}}
        \frac{1}{n}\sum_{i=1}^n\unit_{\{
          \|\bfU\|_{\infty}(\bx_i)>B \}}
      \right]
    \le 3( 3mq  + 1 )\frac{c_1k}{n}
    + \frac{4\gamma^{-2}+8}{3n}.
  $$
\end{lemma}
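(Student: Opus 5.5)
The plan is Gaussian smoothing followed by Proposition \ref{prop:bernstein:smoothed}. First reduce to a union over coordinates:
\[
\unit_{\{\|\bfU\|_\infty(\bx_i)>B\}}\le\sum_{j,\ell}\unit_{\{|\langle\bx_i,\bu_{j,\ell}\rangle|>B\}},
\]
so that only a single direction $\bu\in\mbB_2$ has to be handled. This reduction is what produces the factor $mq$. Keeping the product space and applying the proposition once on $(\re^d)^{m\times q}$ is perhaps cleaner. In that version the smoothed function is $X_i(\bfTheta):=\unit_{\{\|\bfTheta\|_\infty(\bx_i)>B/2\}}$, and coordinate independence of $\Gamma_{\bfU,\gamma}$ gives the expectation bound $\le mq\cdot(\text{one-direction bound})$.

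\textbf{Smoothing inequality.} For fixed $\bz$ and $\bu$, let $\btheta\sim\Gamma_{\bu,\gamma}$. Then $\langle\bz,\btheta\rangle=\langle\bz,\bu\rangle+\gamma\|\bz\|_2N$. If $|\langle\bz,\bu\rangle|>B$, then $|\langle\bz,\btheta\rangle|>B/2$ whenever $\gamma\|\bz\|_2N$ has the same sign as $\langle\bz,\bu\rangle$ or satisfies $|\gamma\|\bz\|_2 N|\le B/2$. This event has probability at least $1/2$. Hence
\[
\unit_{\{|\langle\bz,\bu\rangle|>B\}}\le 2\,\Gamma_{\bu,\gamma}\unit_{\{|\langle\bz,\btheta\rangle|>B/2\}},
\]
and the same argument works coordinatewise for $\|\bfU\|_\infty$. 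Consequently the supremum in the lemma is bounded by $2\sup_{\bfU}\frac1n\sum_i\Gamma_{\bfU,\gamma}X_i(\bfTheta)$. This is exactly the object controlled by Proposition \ref{prop:bernstein:smoothed} with $\bar A=1$, $\bar\sigma^2_\gamma\le\bar\mu_\gamma$ (the $X_i$ are indicators), and last term $\frac{\gamma^{-2}+2}{6n}$. After multiplying by $2$ and absorbing the variance term via $\sqrt{\bar\mu a}\le\bar\mu+a/4$, the result is of the form $3\bar\mu_\gamma+\frac{4\gamma^{-2}+8}{3n}$, matching the constants in the statement.

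\textbf{Bounding $\bar\mu_\gamma$, the main step.} For $\|\bu\|_2\le1$ we have
\[
\Gamma_{\bu,\gamma}\prob(|\langle\bx,\btheta\rangle|>B/2)=\prob\big(|\langle\bx,\bu\rangle+\gamma\|\bx\|_2N|>B/2\big),
\]
with $N$ independent of $\bx$. Split this into $\prob(|\langle\bx,\bu\rangle|>B/4)$ and $\prob(\gamma\|\bx\|_2|N|>B/4)$; equivalently, use the factor $2$ slack in the condition. The first piece is bounded by Markov with $p$ moments: $\nu_p^p(B/2)^{-p}\cdot 2^p$-type terms, which condition \eqref{lemma:count:linear:v2:condition} makes $\lesssim c_1k/n$.

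The second piece is the obstacle, since $\|\bx\|_2$ has only trace-level control. The first option is Markov of order $p$ applied to $\gamma\|\bx\|_2|N|$. This needs $\esp\|\bx\|_2^p\le\tr^{p/2}(\bfSigma)\kappa_p^p$, which I would obtain by writing $\|\bx\|_2^2=\sum_k\langle\bx,e_k\rangle^2$ in the eigenbasis and applying the $L^{p/2}$ triangle inequality. Together with $\esp|N|^p=\sigma_p^p$, this yields the $\sigma_p^2$ branch of the minimum. The alternative branch $4\log(n/c_1k)$ comes from first conditioning on the event $|N|\le\sqrt{2\log(n/c_1k)}$. Its complement has probability $\le c_1k/n$ by the Gaussian tail, and on the event itself Markov is applied to $\|\bx\|_2$ alone; this is where the requirement $\sqrt e\,c_1k/n\le1$ enters.

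Summing the pieces gives $\bar\mu_\gamma\le(3mq+1)c_1k/n$ up to the stated constants: $mq$ coordinates, each contributing roughly $3c_1k/n$, plus one extra $c_1k/n$. Plugging into the bound from the smoothing step finishes the proof. The delicate part is the constant bookkeeping and the choice of the threshold $B/2$ versus $B/4$ so that \eqref{lemma:count:linear:v2:condition} applies exactly.
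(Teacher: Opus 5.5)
The gap is in the smoothing step, and it carries over into your bound on $\bar\mu_\gamma$.

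You smooth at threshold $B/2$, i.e.\ $X_i(\bfTheta)=\unit_{\{\|\bfTheta\|_\infty(\bx_i)>B/2\}}$. You then split $\langle\bx,\bu\rangle+\gamma\|\bx\|_2N$ by the triangle inequality, which forces threshold $B/4$ on each piece. Hypothesis \eqref{lemma:count:linear:v2:condition} only controls level $B/2$. So Markov gives $\prob(|\langle\bx,\bu\rangle|>B/4)\le(4\nu_p/B)^p\le 2^p\,c_1k/n$. Likewise you lose a factor $2^p$ in the $\sigma_p$ branch and $2^{p/2}$ in the logarithmic branch of the Gaussian piece.

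Since $p$ is arbitrary, these are not absolute constants. They cannot be absorbed into $\lesssim$, let alone into $3(3mq+1)$. There is also no factor-$2$ slack left, because you already spent it on the smoothing. As written, your argument proves the lemma only with $B/4$ in place of $B/2$ in the hypothesis.

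The fix is that smoothing costs nothing in the threshold. Consider the event, of $\Gamma_{\bu,\gamma}$-probability at least $1/2$, where $\gamma\|\bz\|_2N$ has the same sign as $\langle\bz,\bu\rangle$. On it you already have $|\langle\bz,\btheta\rangle|\ge|\langle\bz,\bu\rangle|>B$, so $\unit_{\{|\langle\bz,\bu\rangle|>B\}}\le2\,\Gamma_{\bu,\gamma}\unit_{\{|\langle\bz,\btheta\rangle|>B\}}$. Applied at a maximizing index $(j_i,\ell_i)$, this is the paper's inequality. With $X_i$ defined at threshold $B$, the split $B=B/2+B/2$ matches the hypothesis exactly.

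After this repair your proof goes through. Your handling of the Gaussian piece $\Gamma_{\mathbf{0},\gamma}\prob(\max_{j,\ell}|\langle\bx,\btheta_{j,\ell}\rangle|>B/2)$ is then a genuinely more elementary route than the paper's:
\begin{itemize}
\item The paper routes it through the exponential residual $\calE_{\gamma,B/2}$, using Lemma \ref{lemma:normal:residual:counting:v2} and the Lambert-$W$ estimate of Lemma \ref{lemma:exp:Q}. This costs $2c_1k/n$ per coordinate.
\item In the $\sigma_p$ branch, your direct $p$-th moment Markov bound costs $c_1k/n$ per coordinate.
\item In the logarithmic branch, truncating each $|N_{j,\ell}|$ at $\sqrt{2\log(n/c_1k)}$ costs $\sqrt{2/\pi}\,c_1k/n$ per coordinate, using $\log(n/c_1k)\ge1/2$. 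The single Markov bound on $\|\bx\|_2$ then costs at most $2^{-p/2}c_1k/n$, because the hypothesis carries $\sqrt{4\log(n/c_1k)}$.
\end{itemize}
Altogether $\bar\mu_\gamma\le(2mq+1)c_1k/n$. The paper takes its detour because $\calE_{\gamma,B}$ itself, not just a tail probability, is needed again in Lemma \ref{lemma:smoothed:trunc:process:diff}.

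Two bookkeeping points. First, the event on $\gamma\|\bx\|_2$ must be counted once, not once per coordinate, so multiplying a one-direction bound by $mq$ overcounts; your final tally is right. Second, with $a=(\gamma^{-2}+2)/n$, the inequality $\sqrt{\bar\mu_\gamma a}\le\bar\mu_\gamma+a/4$ yields $4\bar\mu_\gamma+\frac{5(\gamma^{-2}+2)}{6n}$. The stated form $3\bar\mu_\gamma+\frac{4\gamma^{-2}+8}{3n}$ instead needs $\bar\sigma_\gamma\sqrt{a}\le\bar\sigma_\gamma^2/2+a/2$.
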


In addition, in our PAC-Bayesian arguments, the following quantity arises as a smoothing residual:
\begin{align}
\calE_{\gamma,B} := \esp\left[\unit_{{\gamma\Vert\bx\Vert_2\le B}}
\exp\left(
-\frac{B^2}{2\gamma^2\Vert\bx\Vert_2^2}
\right)
\frac{\gamma\Vert\bx\Vert_2}{B}
\right].
\label{def:exponential:residual}
\end{align}
Lemma \ref{lemma:exp:Q} in the following gives a bound on this quantity.

Denote by \(W:[0,\infty)\to[0,\infty)\) the Lambert $W$-function, that is, the inverse of the map 
$
  \phi(t) := te^t
$
on $t\ge0$.
\begin{lemma}[Smoothing residual; proven subsequently]\label{lemma:exp:Q}
Suppose 
\begin{align}
    B \ge \gamma \tr^{1/2}(\bfSigma)
\left(
\kappa_p^p\frac{n}{c_1k}
\right)^{\frac1p}
\sqrt{\min\left\{\sigma_p^2,
    2 W\!\left( \left( \frac{n}{c_1k} \right)^2 \right)
\right\}}.\label{lemma:exp:Q:condition}
\end{align}
Then 
$
  \calE_{\gamma,B} \le 2\sqrt{\frac{\pi}{2}}c_1\frac{k}{n}.
$
\end{lemma}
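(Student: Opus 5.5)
We reduce to a one-dimensional bound on the scalar random variable $Y:=\gamma\|\bx\|_2/B$. Set $g(y):=\unit_{\{y\le1\}}\,y\,e^{-1/(2y^2)}$, so that $\calE_{\gamma,B}=\esp[g(Y)]$. We bound $\esp[g(Y)]$ in two ways, one for each branch of the minimum in \eqref{lemma:exp:Q:condition}. In both cases the starting point is the moment bound
\[
\esp\|\bx\|_2^p\le \kappa_p^p\tr^{p/2}(\bfSigma).
\]
This holds by Gaussian symmetrization. Take $\bz\sim\calN(0,\bfI_d)$ independent of $\bx$. Then $\|\bx\|_2=\sigma_p^{-1}(\esp_{\bz}|\langle \bx,\bz\rangle|^p)^{1/p}$. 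Conditioning on $\bz$, hypercontractivity gives $\esp_{\bx}|\langle\bx,\bz\rangle|^p\le \kappa_p^p\|\bfSigma^{1/2}\bz\|_2^p$, and $\esp\|\bfSigma^{1/2}\bz\|_2^p\le\sigma_p^p\tr^{p/2}(\bfSigma)$ by Gaussian moment comparison. The $\sigma_p^p$ factors cancel, which is the only delicate part of this step. Write $M:=\kappa_p^p n/(c_1k)$, so that $B\ge \gamma\tr^{1/2}(\bfSigma)M^{1/p}\,s$ with $s:=\sqrt{\min\{\sigma_p^2,2W((n/c_1k)^2)\}}$.

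\emph{Case $s^2=2W((n/c_1k)^2)$.} Here we use no moments beyond $\esp\|\bx\|_2^p$. We use $M\ge1$, which holds since $\kappa_p\ge1$ and $n\ge c_1k$. This gives $B\ge\gamma\tr^{1/2}(\bfSigma)\,s$, and hence, in principle, only a second-moment comparison is needed. The cleaner route, however, is to bound $g$ pointwise by splitting on $y$. Let $y_0:=1/s$, i.e. $1/(2y_0^2)=W(\cdot)$. For $y\le y_0$ we have
\[
g(y)\le y_0e^{-W((n/c_1k)^2)}.
\]
Since $e^{-W(x)}=W(x)/x$, this is $y_0W/(n/c_1k)^2$. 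The target $c_1k/n$ then follows from an inequality of the form $\sqrt{W(x^2)}\lesssim x$, which holds because $W(x^2)\le x^2$. For $y> y_0$ we use the tail probability $\prob(Y>y_0)\le \esp Y^p\,y_0^{-p}\le s^p/(Ms^p)=c_1k/(n\kappa_p^p)$, together with $g\le1$. Summing the two pieces and tracking constants (the factor $\sqrt{\pi/2}$ presumably comes from a Gaussian-integral identity) should give $2\sqrt{\pi/2}\,c_1k/n$.

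\emph{Case $s=\sigma_p$.} Here we bound $\esp g(Y)\le \sup_{y\le1}\big(g(y)y^{-p}\big)\,\esp Y^p$. The supremum of $y^{1-p}e^{-1/(2y^2)}$ is attained at $y^{-2}=p-1$ and is of order $(p-1)^{(p-1)/2}e^{-(p-1)/2}$. Meanwhile $\esp Y^p\le c_1k/(n\sigma_p^p)$. Since $\sigma_p^p=\esp|N|^p=2^{p/2}\Gamma((p+1)/2)/\sqrt\pi$, Stirling's formula should show the ratio is at most about $\sqrt{\pi/2}$. This is where the constant $\sqrt{\pi/2}$ naturally appears: $\sigma_p^p\sqrt{\pi/2}$ equals a Gamma value exactly.

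The main obstacle is the constant bookkeeping in both cases, particularly the Stirling/Gamma comparison $\sup_y y^{1-p}e^{-1/(2y^2)}\le 2\sqrt{\pi/2}\,\sigma_p^p$ uniformly in $p\ge2$. To check this I would try the substitution $t=1/(2y^2)$, which turns the supremum into $\sup_t (2t)^{(p-1)/2}e^{-t}$, and compare it with $\Gamma((p+1)/2)$ via $\sup_t t^ae^{-t}\le\Gamma(a+1)$.
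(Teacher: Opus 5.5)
Your proposal is correct; in the $W$-branch it follows the paper's argument, and in the $\sigma_p$-branch it takes a different and more elementary route. Your symmetrization proof of $\esp\|\bx\|_2^p\le\kappa_p^p\tr^{p/2}(\bfSigma)$, which the paper uses without comment, is fine. Its comparison step is Minkowski's inequality in $L^{p/2}$ applied to $\sum_i\lambda_i z_i^2$.

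\textbf{The $W$-branch.} This is essentially the paper's Step~1. Both arguments take a good event for $\|\bx\|_2$, control its complement by Markov's inequality with the $p$-th moment, and use the monotonicity of $y\mapsto ye^{-1/(2y^2)}$ together with $e^{-W(X)}=W(X)/X$ on the good event. Your split at $Y=1/s$ is slightly cleaner than the paper's event $\{\|\bx\|_2^2\le(1+M^{2/p})\tr(\bfSigma)\}$. It needs neither $M\ge1$ nor the slack $1+M^{-2/p}\le2$, and it gives $(1+2^{-1/2})c_1k/n$. Two small corrections:
\begin{itemize}
\item In the Markov step you dropped the $\kappa_p^p$ coming from $\esp\|\bx\|_2^p$. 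The correct bound is $\prob(Y>y_0)\le\kappa_p^p/M=c_1k/n$, which still suffices.
\item Your aside that a second-moment comparison would be enough is false. It is precisely the factor $M^{1/p}$, used with $p$-th moments, that makes the tail of order $c_1k/n$. You never use this aside, so it does no harm.
\end{itemize}

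\textbf{The $\sigma_p$-branch.} Here the routes genuinely differ. The paper uses the lower bound of Lemma~\ref{lemma:normal:residual:counting:v2}. On $\{\gamma\|\bx\|_2\le B\}$ one has $B+\gamma^2\|\bx\|_2^2/B\le 2B$, so the residual is at most $2\sqrt{\pi/2}\,\Gamma_{\mathbf 0,1}\prob(\gamma|\langle\btheta,\bx\rangle|>B)$. Markov's inequality with $\Gamma_{\mathbf 0,1}\esp|\langle\bx,\btheta\rangle|^p=\sigma_p^p\esp\|\bx\|_2^p$ then finishes.

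You instead bound $g(y)\le y^p\sup_{v>0}v^{1-p}e^{-1/(2v^2)}$ and evaluate the supremum directly. Your plan closes exactly, and no Stirling estimate is needed:
\begin{itemize}
\item With $a=(p-1)/2$, the substitution $t=1/(2v^2)$ gives $\sup_{v>0}v^{1-p}e^{-1/(2v^2)}=2^{a}a^ae^{-a}$.
\item Since $a^ae^{-a}\le\int_a^\infty t^ae^{-t}\,dt\le\Gamma(a+1)$, the supremum is at most $2^{(p-1)/2}\Gamma(\frac{p+1}{2})=\sqrt{\pi/2}\,\sigma_p^p$.
\item Combined with $\esp Y^p\le c_1k/(n\sigma_p^p)$, this gives $\calE_{\gamma,B}\le\sqrt{\pi/2}\,c_1k/n$.
\end{itemize}

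Your version is self-contained one-dimensional calculus and improves the constant by a factor $2$. The paper's version avoids any Gamma-function computation by reusing the Gaussian tail sandwich it needs anyway for Proposition~\ref{prop:counting:linear:v2}. That keeps this step inside the smoothing framework used throughout the paper.
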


To prove these lemmas we need with some auxiliary results.

\subsubsection{Auxiliary results \& proof of Lemma \ref{lemma:exp:Q}}
To prove Lemma \ref{lemma:exp:Q}, we need the following proposition.
\begin{proposition}[Smoothed counting probability; proven subsequently]\label{prop:counting:linear:v2}
  Under the assumptions of Lemma \ref{lemma:count:linear:v2}, 
    \[
        \sup_{\bfU\in\mbB_2^{m\times q}}\Gamma_{\bfU,\gamma}\prob\big(
            \|\bfTheta\|_{\infty}(\bx) > B
        \big) \le 
    ( 3mq  + 1 )\frac{c_1k}{n}.
    \]
\end{proposition}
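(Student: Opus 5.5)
Under $\Gamma_{\bfU,\gamma}$ we write each $\btheta_{j,\ell}=\bu_{j,\ell}+\gamma\bz_{j,\ell}$, where the $\bz_{j,\ell}$ are i.i.d. standard Gaussians in $\re^d$ independent of $\bx$. A union bound over the $mq$ coordinates reduces the claim to a bound, uniform in $\bu$ with $\|\bu\|_2\le1$, on
\[
\prob\big(|\langle\bx,\bu\rangle+\gamma\langle\bx,\bz\rangle|>B\big)
\le \prob\big(|\langle\bx,\bu\rangle|>B/2\big)+\prob\big(\gamma|\langle\bx,\bz\rangle|>B/2\big).
\]
The target is a bound of at most $3\,c_1k/n$ per coordinate, plus at most $c_1k/n$ of slack overall. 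This gives the stated $(3mq+1)c_1k/n$; alternatively, the "$+1$" can absorb one shared term common to all coordinates.

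\textbf{Deterministic part.} By Markov's inequality with $p$ moments,
\[
\prob(|\langle\bx,\bu\rangle|>B/2)\le \nu_p^p\,(B/2)^{-p}\le c_1k/n,
\]
using the first branch of \eqref{lemma:count:linear:v2:condition}.

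\textbf{Gaussian part.} Condition on $\bx$. Then $\langle\bx,\bz\rangle\sim \|\bx\|_2 N$ with $N\sim\calN(0,1)$. We treat the two branches of the minimum separately.

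If the minimum is $\sigma_p^2$, take $p$-th moments:
\[
\esp|\langle\bx,\bz\rangle|^p=\sigma_p^p\,\esp\|\bx\|_2^p .
\]
Bound $\esp\|\bx\|_2^p$ by $\tr^{p/2}(\bfSigma)\kappa_p^p$. For this, write $\|\bx\|_2^2=\sum_i\langle\bx,e_i\rangle^2$ in an eigenbasis of $\bfSigma$ and apply Minkowski in $L^{p/2}$ (valid since $p\ge2$):
\[
\|\,\|\bx\|_2^2\|_{p/2}\le\sum_i\|\langle\bx,e_i\rangle\|_p^2\le\kappa_p^2\sum_i\lambda_i=\kappa_p^2\tr(\bfSigma).
\]
Markov then gives
\[
\prob\le \frac{\gamma^p\sigma_p^p\tr^{p/2}(\bfSigma)\kappa_p^p}{(B/2)^p}\le \frac{c_1k}{n}
\]
by the second branch of \eqref{lemma:count:linear:v2:condition}.

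If the minimum is $4\log(n/(c_1k))$, use the Gaussian tail $\prob(|N|>s)\le 2e^{-s^2/2}$ conditionally on $\bx$. Split according to whether $\|\bx\|_2\le R$ or $\|\bx\|_2>R$, with
\[
R:=\tr^{1/2}(\bfSigma)\,\kappa_p\,(n/(c_1k))^{1/p}.
\]
The event $\|\bx\|_2>R$ has probability at most $\esp\|\bx\|_2^p/R^p\le c_1k/n$, by the same Minkowski bound. On $\|\bx\|_2\le R$, the condition gives $B/(2\gamma R)\ge 2\sqrt{\log(n/(c_1k))}$, so the conditional tail is at most
\[
2\exp\!\big(-2\log(n/(c_1k))\big)=2(c_1k/n)^2\le 2c_1k/n .
\]
The total is at most $3c_1k/n$ per coordinate. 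The constant $3$ in $3mq$ suggests exactly this three-term accounting: deterministic part, large-norm event, and Gaussian tail. Note also that the event $\|\bx\|_2>R$ does not depend on $(j,\ell)$, so it can be counted once as the "$+1$". In the $\sigma_p$ branch everything fits within $2c_1k/n$.

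\textbf{Main obstacle.} The step needing care is the moment bound $\esp\|\bx\|_2^p\le\kappa_p^p\tr^{p/2}(\bfSigma)$ through Minkowski. Matching the exact constants $3mq+1$ is only bookkeeping; the factor $2$ in $B/2$ and the $4\log$ are what make the exponents work.
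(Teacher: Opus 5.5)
Your proof is correct. Its skeleton is the paper's: the same split of each $\btheta_{j,\ell}$ at level $B/2$, the same union bound plus Markov with $\nu_p$ for the deterministic part (giving $mq\,c_1k/n$), and a single norm-truncation event shared by all coordinates that produces the ``$+1$''.

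The difference is in the Gaussian part. The paper does not split on the active branch of the minimum inside this proof. Instead it proceeds in four steps:
\begin{enumerate}
\item It uses Lemma~\ref{lemma:Lambert} to pass from the $4\log(n/(c_1k))$ condition to the weaker $2W((n/(c_1k))^2)$ condition.
\item It truncates on $\{\gamma\|\bx\|_2\le B/2\}$.
\item It applies the Mills-ratio upper bound of Lemma~\ref{lemma:normal:residual:counting:v2} coordinatewise.
\item It controls the resulting smoothing residual $\calE_{\gamma,B/2}$ by Lemma~\ref{lemma:exp:Q}. That lemma's two steps are a second norm truncation with a Lambert-$W$ calibration of the threshold, and, for the $\sigma_p$ branch, the lower Mills bound followed by exactly your Markov argument on $\sigma_p^p\,\esp\|\bx\|_2^p$.
\end{enumerate}

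Your route is more elementary and self-contained: direct Markov in the $\sigma_p$ branch, and the crude tail $2e^{-s^2/2}$ conditional on $\|\bx\|_2\le R$ in the logarithmic branch. It needs none of Lemmas~\ref{lemma:normal:residual:counting:v2}, \ref{lemma:exp:Q} or \ref{lemma:Lambert}. Since $c_1k/n<1/e$, it actually yields $(2mq+1)c_1k/n$. You also supply the Minkowski proof of $\esp\|\bx\|_2^p\le\kappa_p^p\tr^{p/2}(\bfSigma)$, which the paper uses without comment.

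What the paper's detour buys is reuse. Keeping the prefactor $\gamma\|\bx\|_2/B$ from the Mills ratio permits the sharper Lambert-$W$ threshold. More importantly, the same residual $\calE$, with that prefactor, reappears in the truncation analysis (Lemma~\ref{lemma:smoothed:trunc:process:diff} and Proposition~\ref{prop:truncated:complexity}). There it comes from Lemma~\ref{lemma:normal:residual} and is not a tail probability, so a direct tail argument like yours would not cover that second use.

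One bookkeeping point. With your stated bounds (deterministic $c_1k/n$, large norm $c_1k/n$, Gaussian tail $2(c_1k/n)^2\le 2c_1k/n$), the claim ``at most $3c_1k/n$ per coordinate'' does not follow if the norm event is charged to every coordinate. It is rescued either by charging that event once, as you then note, or by using $c_1k/n<1/e$, so that $2(c_1k/n)^2<c_1k/n$.
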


To prove Lemma \ref{lemma:exp:Q} and Proposition \ref{prop:counting:linear:v2}, we need Lemma \ref{lemma:normal:residual:counting:v2}, stated next. Its proof is given in Section \ref{ss:suplement:lemma:normal:residual:counting:v2} of the supplement. 
\begin{lemma}[One-dimensional Gaussian tail; proof in \S \ref{ss:suplement:lemma:normal:residual:counting:v2} of supplement]
\label{lemma:normal:residual:counting:v2}
For any $\bz\in\re^{d}\setminus\{\mathbf{0}\}$, 
\[
  \exp\left(-\frac{B^2}{2\gamma^2\Vert\bz\Vert_2^2}\right)
    \frac{\gamma\Vert\bz\Vert_2}{B+\frac{\gamma^2\Vert\bz\Vert_2^2}{B}}
  \le \frac{\Gamma_{\mathbf{0},\gamma}\unit_{\{|\langle\bz,\btheta\rangle| > B\}}}{\sqrt{2/\pi}}
  \le \exp\left(-\frac{B^2}{2\gamma^2\Vert\bz\Vert_2^2}\right)\frac{\gamma\Vert\bz\Vert_2}{B}.
\]
\end{lemma}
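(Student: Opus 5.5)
The statement to prove is Lemma~\ref{lemma:normal:residual:counting:v2}, a purely one-dimensional fact. The plan is to reduce to a scalar Gaussian tail and apply the classical Mills-ratio bounds.

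\textbf{Reduction.} Under $\Gamma_{\mathbf{0},\gamma}$ the vector $\btheta$ is $\calN(\mathbf{0},\gamma^2\bfI_d)$. Hence for fixed $\bz\neq\mathbf{0}$ the scalar $\langle\bz,\btheta\rangle$ is distributed as $\gamma\|\bz\|_2 N$ with $N\sim\calN(0,1)$. Set $x:=B/(\gamma\|\bz\|_2)>0$. By symmetry,
\[
\Gamma_{\mathbf{0},\gamma}\unit_{\{|\langle\bz,\btheta\rangle|>B\}}=\prob(|N|>x)=2\prob(N>x)=\sqrt{\tfrac{2}{\pi}}\int_x^\infty e^{-s^2/2}\,ds .
\]
Dividing by $\sqrt{2/\pi}$, the claim is equivalent to the two-sided Mills-ratio inequality
\[
\frac{x}{1+x^2}\,e^{-x^2/2}\le\int_x^\infty e^{-s^2/2}\,ds\le\frac{1}{x}\,e^{-x^2/2}.
\]
To check this, note that $\gamma\|\bz\|_2/B=1/x$ and $\gamma\|\bz\|_2/(B+\gamma^2\|\bz\|_2^2/B)=1/(x+1/x)=x/(1+x^2)$, and that the exponential factor is exactly $e^{-x^2/2}$.

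\textbf{Upper bound.} For $s\ge x$ we have $1\le s/x$, so
\[
\int_x^\infty e^{-s^2/2}\,ds\le\frac1x\int_x^\infty s\,e^{-s^2/2}\,ds=\frac{e^{-x^2/2}}{x}.
\]

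\textbf{Lower bound.} Define $g(x):=\int_x^\infty e^{-s^2/2}\,ds-\frac{x}{1+x^2}e^{-x^2/2}$ and differentiate. Since $\frac{d}{dx}\frac{x}{1+x^2}=\frac{1-x^2}{(1+x^2)^2}$, we get
\[
g'(x)=-e^{-x^2/2}\Bigl[1+\frac{1-x^2}{(1+x^2)^2}-\frac{x^2}{1+x^2}\Bigr]=-\frac{2}{(1+x^2)^2}\,e^{-x^2/2}<0.
\]
So $g$ is decreasing. Moreover $g(x)\to0$ as $x\to\infty$, because both terms vanish there. Therefore $g\ge0$ on $(0,\infty)$, which is the lower bound.

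The only step needing care is this derivative computation and the limit at infinity. The rest is bookkeeping with the factor $\sqrt{2/\pi}$ and the substitution $x=B/(\gamma\|\bz\|_2)$.
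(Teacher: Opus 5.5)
Your proof is correct and follows the paper's route. Both arguments reduce to a scalar Gaussian tail (the paper keeps $\sigma=\gamma^2\|\bz\|_2^2$ and $B$ unnormalized instead of setting $x=B/(\gamma\|\bz\|_2)$) and obtain the upper bound from $1\le s/x$. The only difference is the lower bound: the paper integrates by parts and absorbs the remainder using $1/t^2\le 1/B^2$, whereas you show $g'(x)=-2(1+x^2)^{-2}e^{-x^2/2}<0$ and use $g(x)\to 0$ as $x\to\infty$, which is an equally valid standard verification of the Mills-ratio lower bound.
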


Assuming these results, we can prove Lemma \ref{lemma:exp:Q}. 
\begin{proof}[Proof of Lemma \ref{lemma:exp:Q}]
\textbf{Step 1.} We start proving that
\[
    B \ge \gamma \tr^{1/2}(\bfSigma)
\left(
\kappa_p^p\frac{n}{c_1k}
\right)^{\frac1p}
\sqrt{
  2 W\!\left( \left( \frac{n}{c_1k} \right)^2 \right)
}
\quad \Longrightarrow \quad 
\calE_{\gamma,B}\le2\sqrt{\frac{\pi}{2}} c_1\frac{k}{n}.\label{lemma:exp:Q:eq1}
\]

Define the event
$
\mathsf{E}:=
\left\{
\Vert\bx\Vert_2^2
\le
\left(
1+
\left(
\kappa_p^p\frac{n}{c_1k}
\right)^{\frac{2}{p}}
\right)
\tr(\bfSigma)
\right\}.
$
By Markov's inequality,
\[
\prob(\mathsf{E}^c)
\le
c_1\frac{k}{n}.\label{lemma:exp:Q:eq2}
\]

Let us assume that, for some $\theta>0$, 
$
B
\ge
\theta\cdot
\gamma
\tr^{1/2}(\bfSigma)
\left(
\kappa_p^p\frac{n}{c_1k}
\right)^{\frac1p}.
$
On the event
$\{\gamma\Vert\bx\Vert_2\le B\}\cap\mathsf{E}$,
\begin{align}
\gamma^2\Vert\bx\Vert_2^2
\le
\gamma^2\tr(\bfSigma)
\left(
1+
\left(
\kappa_p^p\frac{n}{c_1k}
\right)^{\frac{2}{p}}
\right)
\le
\frac{B^2}{\theta^2}
\left(
\left(
\kappa_p^p\frac{n}{c_1k}
\right)^{-\frac2p}
+1
\right),
\end{align}
and, in particular,
$
\frac{B^2}{\gamma^2\Vert\bx\Vert_2^2}
\ge
\frac{\theta^2}{
\left(
\left(
\kappa_p^p\frac{n}{c_1k}
\right)^{-\frac2p}
+1
\right)
}.
$

Splitting the expectation defining \(\calE_{\gamma,B}\)
according to the event \(\mathsf{E}\), we obtain
\begin{align}
\calE_{\gamma,B}
&\le
\esp\left[
\unit_{\{\gamma\Vert\bx\Vert_2\le B\}\cap\mathsf{E}}
\exp\left(
-\frac{B^2}{2\gamma^2\Vert\bx\Vert_2^2}
\right)
\frac{\gamma\Vert\bx\Vert_2}{B}
\right]
+
\prob(\mathsf{E}^c)\\
&\le
\exp\left(
-
\frac{\theta^2/2}{
\left(
\left(
\kappa_p^p\frac{n}{c_1k}
\right)^{-\frac2p}
+1
\right)
}
\right)
\frac{
\sqrt{
\left(
\kappa_p^p\frac{n}{c_1k}
\right)^{-\frac2p}
+1
}
}{\theta}
+
c_1\frac{k}{n},\label{lemma:exp:Q:eq2'}
\end{align}
where in the second inequality we used that the function 
$t\mapsto
\exp\left(-t/2\right)/\sqrt{t}
$
is decreasing on $(0,\infty)$ and the bound \eqref{lemma:exp:Q:eq2}.

Let
$
a:=
(
\kappa_p^p\frac{n}{c_1k}
)^{-\frac2p}
$
and choose $\theta>0$ such that
$
\frac{\sqrt{1+a}}{\theta}
\exp\left(
-\frac{\theta^2}{2(1+a)}
\right)
=
c_1\frac{k}{n}.
$
An elementary computation entails 
$
  \theta = \sqrt{
    (1+a)W\left(
      \left(\frac{n}{c_1k}\right)^2
    \right)
  }. 
$
By \eqref{lemma:exp:Q:eq2'}, 
$
\calE_{\gamma,B}
\le
2c_1\frac{k}{n} \le 2\sqrt{\frac{\pi}{2}} c_1\frac{k}{n},
$
and the assumption on $B$ becomes
\[
B
\ge
\gamma
\tr^{1/2}(\bfSigma)
\left(
\kappa_p^p\frac{n}{c_1k}
\right)^{\frac1p}
\sqrt{
\left(
1+a
\right)
W\!\left(
\left(
\frac{n}{c_1k}
\right)^2
\right)
}.
\]
Since $a\le1$, for the displayed condition on $B$ to hold, it is sufficient that 
$
   B \ge \gamma \tr^{1/2}(\bfSigma)
\left(
\kappa_p^p\frac{n}{c_1k}
\right)^{\frac1p}
\sqrt{
  2 W\!\left( \left( \frac{n}{c_1k} \right)^2 \right)
}.
$ 
This proves \eqref{lemma:exp:Q:eq1}.

\textbf{Step 2.} Next, we prove that 
\[
    B \ge \gamma \tr^{1/2}(\bfSigma)
\left(
\sigma_p^p\kappa_p^p\frac{n}{c_1k}
\right)^{\frac1p}
\quad \Longrightarrow \quad 
\calE_{\gamma,B} \le 2\sqrt{\frac{\pi}{2}} c_1\frac{k}{n}.
\label{lemma:exp:Q:eq3}
\]

Conditionally on $\bx$, $\langle\bx,\btheta\rangle\sim \|\bx\|_2N$. From this fact and the independence between $\bx$ and $\btheta$ (Fubini's theorem),
\begin{align}
  \esp\left[
    \Gamma_{\mathbf{0},1}|\langle\bx,\btheta\rangle|^p
  \right]
  = \esp\left[\esp\left[
    \Gamma_{\mathbf{0},1}|\langle\bx,\btheta\rangle|^p \Big| \bx
  \right]\right] 
  = \sigma_p^p \cdot \esp[\Vert\bx\Vert_2^p]. 
\end{align}
Since $\esp[\Vert\bx\Vert_2^p]\le \kappa_p^p\tr^{\frac{p}{2}}(\bfSigma)$, we obtain 
$
  \esp\left[
    \Gamma_{\mathbf{0},1}|\langle\bx,\btheta\rangle|^p
  \right]
  \le \sigma_p^p \kappa_p^p\tr^{\frac{p}{2}}(\bfSigma). 
$
From Markov's inequality on the distribution $\prob$ and Fubini's theorem, 
\begin{align}
    \Gamma_{\mathbf{0},1}\prob\left(
            \gamma|\langle\btheta,\bx\rangle| > B
        \right)
    = \frac{\gamma^p
        \esp[\Gamma_{\mathbf{0},1}|\langle\btheta,\bx\rangle|^{p}]}
        {B^p}
    \le \frac{
      \gamma^p \sigma_p^p \kappa_p^p\tr^{\frac{p}{2}}(\bfSigma)
    }{ B^p } \le c_1\frac{k}{n}, 
\end{align}
where we used the assumption  
$
  B \ge \gamma \tr^{1/2}(\bfSigma)
(
\sigma_p^p\kappa_p^p\frac{n}{c_1k}
)^{\frac1p}. 
$

By the bound above, Fubini's theorem and Lemma \ref{lemma:normal:residual:counting:v2},
\begin{align}
  \esp\left[\exp\left(-\frac{B^2}{2\gamma^2\Vert\bx\Vert_2^2}\right)
    \frac{\gamma\Vert\bx\Vert_2}{B}
    \unit_{\{\gamma\Vert\bx\Vert_2\le B\}}
  \right]
  &\le 2\esp\left[\exp\left(-\frac{B^2}{2\gamma^2\Vert\bx\Vert_2^2}\right)
    \frac{\gamma\Vert\bx\Vert_2}{B+\frac{\gamma^2\Vert\bx\Vert_2^2}{B}}
  \right] \\
  &\le 2\sqrt{\frac{\pi}{2}}\Gamma_{\mathbf{0},1}\prob\left(
            \gamma|\langle\btheta,\bx\rangle| > B
        \right)\\
  &\le 2\sqrt{\frac{\pi}{2}}c_1\frac{k}{n}.
\end{align}
This proves \eqref{lemma:exp:Q:eq3}.

Having proved both implications \eqref{lemma:exp:Q:eq1} and \eqref{lemma:exp:Q:eq3}, the implication stated in the lemma follows immediately. 
\end{proof}

We now prove Proposition \ref{prop:counting:linear:v2}. We need an elementary lemma whose proof is ommited. 
\begin{lemma}\label{lemma:Lambert}
  For all $x\ge e$, $W(x)\le\log x$. 
\end{lemma}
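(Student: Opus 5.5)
The statement to prove is: for all $x\ge e$, $W(x)\le\log x$. The plan is to use only two facts about $W$: it inverts the increasing map $\phi(t)=te^t$ on $[0,\infty)$, and hence $W$ is itself increasing there.

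Since $\phi$ is strictly increasing, continuous and onto $[0,\infty)$, the inequality $W(x)\le\log x$ is equivalent to $x\le\phi(\log x)$, provided $\log x\ge0$. That proviso holds because $x\ge e$. Applying the increasing map $\phi$ to both sides, and using $\phi(W(x))=x$, we get
\[
W(x)\le\log x \iff x=\phi(W(x))\le \phi(\log x)=x\log x .
\]

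The right-hand condition $x\le x\log x$ is the same as $\log x\ge1$ (divide by $x>0$), i.e. $x\ge e$. So the claim follows directly from the hypothesis.

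Every step is elementary. The only point needing care is that $\log x\ge0$, so that $\log x$ lies in the domain $[0,\infty)$ where $\phi$ is monotone and inverted by $W$; this is guaranteed by $x\ge e$.
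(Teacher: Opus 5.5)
Your proof is correct: since $W(x)$ and $\log x$ both lie in $[0,\infty)$, where $\phi(t)=te^t$ is strictly increasing, the claim reduces to $x=\phi(W(x))\le\phi(\log x)=x\log x$, which is exactly $x\ge e$. The paper states this lemma without proof, calling it elementary, and your argument is the natural verification that fills that gap.
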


\begin{proof}[Proof of Proposition \ref{prop:counting:linear:v2}]
    For any $\bfU:=\{\bu_{j,\ell}\}_{j\in[m],\,\ell\in[q]}$, 
    \begin{align}
        \Gamma_{\bfU,\gamma}\prob\left(
            \max_{j\in[m],\,\ell\in[q]}|\langle\bx,\btheta_{j,\ell}\rangle| > B
        \right) &\le 
        \prob\left(
            \max_{j\in[m],\,\ell\in[q]}|\langle\bx,\bu_{j,\ell}\rangle| > \frac{B}{2}
        \right)\\
        &+ \Gamma_{\mathbf{0}_{m\times q},\gamma}\prob\left(
            \max_{j\in[m],\,\ell\in[q]}|\langle\bx,\btheta_{j,\ell}\rangle| > \frac{B}{2}
        \right),
      \label{prop:counting:linear:v2:eq1}
    \end{align}
where $\mathbf{0}_{m\times q}$ denotes the collection of $mq$ null vectors in $\re^d$. 

From \eqref{lemma:count:linear:v2:condition}, $B/2 \ge (\nu_p^p\frac{n}{c_1k})^{\nicefrac{1}{p}}$. From this fact, the union bound and Markov's inequality, 
\[
     \prob\left(
            \max_{j\in[m],\,\ell\in[q]}|\langle\bx,\bu_{j,\ell}\rangle| > \frac{B}{2}
        \right)
    \le mq \frac{c_1k}{n}.
    \label{prop:counting:linear:v2:eq2}
\]

Similarly, from \eqref{lemma:count:linear:v2:condition} 
and the fact that 
$
    \sqrt{\min\{\sigma_p^2,
      4 \log( \frac{n}{c_1k} )
    \}} \ge1
$
(since $\sqrt{e}\frac{c_1k}{n}\le1$ by assumption), 
$
B/2 \ge \gamma\tr^{\nicefrac{1}{2}}(\bfSigma)(\kappa_p^p\frac{n}{c_1k})^{\nicefrac{1}{p}}.
$ 
From this fact and Markov's inequality,
\[
     \prob(
            \gamma\Vert \bx\Vert_2 > B/2
        )
    \le  \frac{c_1k}{n}.
    \label{prop:counting:linear:v2:eq3}
\]

From \eqref{lemma:count:linear:v2:condition} and Lemma \ref{lemma:Lambert} (noting that $\sqrt{e}\frac{c_1k}{n}\le1$ by assumption), 
\begin{align}
  B/2 \ge \gamma\tr^{\nicefrac{1}{2}}(\bfSigma)
  \left(
        \kappa_p^p\frac{n}{c_1k}
      \right)^{\frac1p}
  \sqrt{\min\left\{\sigma_p^2,
      2W\left(
      \left(\frac{n}{c_1k}\right)^2
    \right)
    \right\}}.
\end{align}

From this fact and Lemma \ref{lemma:exp:Q}, 
$
  \calE_{\gamma,B/2}
  \le 2\sqrt{\frac{\pi}{2}}c_1\frac{k}{n}.
$
A partition of the expectation, a union bound, Fubini's theorem and Lemma \ref{lemma:normal:residual:counting:v2} yield
\begin{align}
  &\Gamma_{\mathbf{0}_{m\times q},\gamma}\prob\left(
            \max_{j\in[m],\,\ell\in[q]}|\langle\bx,\btheta_{j,\ell}\rangle| > \frac{B}{2}
        \right)\\
  &\le mq\sqrt{\frac{2}{\pi}}\esp\left[\unit_{\{
    \gamma\Vert\bx\Vert_2 \le B/2
  \}}\exp\left(
            -\frac{(B/2)^2}{2\gamma^2\Vert \bx\Vert_2^2}
        \right)\frac{\gamma\Vert \bx\Vert_2}{(B/2)}
  \right]
  + 
  \prob\left(
    \gamma\Vert \bx\Vert_2 > B/2
  \right)\\
  &\le \left(
    2mq  + 1
    \right)\frac{c_1k}{n},
  \label{prop:counting:linear:v2:eq4}
\end{align}
where in the last inequality we used $
  \calE_{\gamma,B/2}
  \le 2\sqrt{\frac{\pi}{2}}c_1\frac{k}{n}
$ and  \eqref{prop:counting:linear:v2:eq3}. 

Combining the bounds \eqref{prop:counting:linear:v2:eq2} and \eqref{prop:counting:linear:v2:eq4} in \eqref{prop:counting:linear:v2:eq1}, we finish the proof. 
\end{proof}

\subsubsection{Proof of Lemma \ref{lemma:count:linear:v2}}
\label{proof:lemma:count:linear:v2}

We are now ready to prove Lemma \ref{lemma:count:linear:v2}. As in the proof of Lemma 4.2 in \cite{2024Oliveira:Rico}, an argument based on the symmetry of a Gaussian distribution implies that
\begin{align}
  \unit_{\{|\langle\bx_i,\bv\rangle| > B\}}\le 2\Gamma_{\mathbf{0},\gamma}\unit_{\{|\langle\bx_i,\bv+\btheta\rangle| > B\}},\quad 
  \forall\bv\in\mbB_2,\forall i\in[n]. 
\end{align}

In particular, given any $i\in[n]$ and $\bfU=\{\bu_{j,\ell}\}_{j\in[m],\,\ell\in[q]}\in(\re^d)^{m\times q}$, let $(j_i,\ell_i)\in 
\argmax_{j\in[m],\ell\in[q]}|\langle\bx_i,\bu_{j,\ell}\rangle|$. Then, if $\bfTheta=\{\btheta_{j,\ell}\}_{j\in[m],\ell\in[q]}$ are iid Gaussian with mean zero and covariance $\gamma^2\bfI_d$, 
\begin{align}
\unit_{\{\max_{j\in[m],\,\ell\in[q]}|\langle\bx_i,\bu_{j,\ell}\rangle| > B\}}
&= \unit_{\{|\langle\bx_i,\bu_{j_i,\ell_i}\rangle| > B\}}\\
& \le 2\Gamma_{\mathbf{0},\gamma}\unit_{\{|\langle\bx_i,\bu_{j_i,\ell_i}+\btheta_{j_i,\ell_i}\rangle| > B\}}\\
&\le 2\Gamma_{\mathbf{0}_{m\times q},\gamma}\unit_{\{\max_{j\in[m],\,\ell\in[q]}|\langle\bx_i,\bu_{j,\ell}+\btheta_{j,\ell}\rangle| > B\}}\\
&= 2\Gamma_{\bfU,\gamma}\unit_{\{\max_{j\in[m],\,\ell\in[q]}|\langle\bx_i,\btheta_{j,\ell}\rangle| > B\}},
\label{lemma:count:linear:v2:eq1}
\end{align}
where $\mathbf{0}_{m\times q}$ denotes the collection of $mq$ null vectors in $\re^d$.

We will apply Proposition  \ref{prop:bernstein:smoothed} with  $X_i(\bfTheta):=\unit_{\{\max_{j\in[m],\,\ell\in[q]}|\langle\bx_i,\btheta_{j,\ell}\rangle|>B\}}$ and $\bar A:=1$. First, Proposition \ref{prop:counting:linear:v2} with $B$ as stated ensures that 
\begin{align}
  \bar\sigma_{\gamma}^2\le \bar\mu_{\gamma} = \sup_{\bfU\in\mbB_2^{m\times q}}\Gamma_{\bfU,\gamma}\esp X_1(\bfTheta) 
    \le ( 3mq  + 1 )\frac{c_1k}{n}.  
  \label{lemma:count:linear:v2:eq2}
\end{align}
Secondly, Proposition \ref{prop:bernstein:smoothed} and 
the relation 
$
  \bar\sigma_\gamma \sqrt{\frac{\gamma^{-2}+2}{n}}
  \le \frac{\bar\sigma_\gamma^2}{2} + \frac{\gamma^{-2}+2}{2n}
$
yield
\begin{align}
\esp\left[\sup_{\bfU\in\mbB_2^{m\times q}}\frac{1}{n}\sum_{i=1}^n
            \Gamma_{\bfU,\gamma}X_i(\bfTheta)\right]
  \le \frac{3}{2}\bar\mu_\gamma
            + \frac{4\gamma^{-2}+8}{6n}.
\end{align}

From the above bound, \eqref{lemma:count:linear:v2:eq1} and \eqref{lemma:count:linear:v2:eq2}, the inequality claimed in Lemma \ref{lemma:count:linear:v2} is proved.

\subsection{Concentration of the counting process}\label{ss:counting:process}

Fix $Q>0$. In this section we establish concentration of the counting process in \eqref{equation:counting:process}. 

\begin{lemma}[Concentration of the counting process; proof below]\label{lemma:count:linear:v3}
  Assume 
  \begin{align}
    \frac{Q^{1/q}}{2m^{\nicefrac{1}{q}}}
    \ge \left(\nu_p^p\frac{n}{c_1k}\right)^{\frac{1}{p}}
    \bigvee 
    \left(
      \gamma \tr^{1/2}(\bfSigma)
      \left(
        \kappa_p^p\frac{n}{c_1k}
      \right)^{\frac1p}
      \sqrt{\min\left\{\sigma_p^2,
      4 \log\left( \frac{n}{c_1k} \right)
    \right\}}
    \right).
  \label{lemma:count:linear:condition:B:c:v3}
  \end{align}
  Let any $\delta\in(0,1)$. Then, with probability at least $1-\delta$,
  \begin{align}
    Z_{(AQ)} \le Z_{Q} \le 
    \frac{14\gamma^{-2}+28+4\log(1/\delta)}{3n}
    + \frac{21}{2}( 3mq  + 1 )c_1\frac{k}{n}.
  \end{align}
\end{lemma}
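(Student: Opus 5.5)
The plan is to reduce the counting process $Z_Q$ to the counting process over maximal linear marginals controlled by Lemma \ref{lemma:count:linear:v2}, and then to pass from expectation to high probability via Bousquet's inequality (Theorem \ref{thm:bousquet}).

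\textbf{Step 1: monotonicity.} The inequality $Z_{(AQ)}\le Z_Q$ is immediate from $A\ge1$, since $\calI_{AQ}\le\calI_Q$ pointwise.

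\textbf{Step 2: pointwise domination.} Set $B:=(Q/m)^{1/q}$. For $\bfU\in\mbB_2^{m\times q}$ and any $\bz$, if $\|\bfU\|_\infty(\bz)\le B$ then
\[
\Big|\sum_{j=1}^m\prod_{\ell=1}^q\langle\bz,\bu_{j,\ell}\rangle\Big|\le mB^q=Q .
\]
Hence
\[
\calI_Q\Big(\sum_j\prod_\ell\langle\bz,\bu_{j,\ell}\rangle\Big)\le\unit_{\{\|\bfU\|_\infty(\bz)>B\}},
\]
and therefore
\[
Z_Q\le Y:=\sup_{\bfU\in\mbB_2^{m\times q}}\frac1n\sum_{i=1}^n\unit_{\{\|\bfU\|_\infty(\bx_i)>B\}} .
\]
With this choice of $B$, condition \eqref{lemma:count:linear:condition:B:c:v3} is exactly \eqref{lemma:count:linear:v2:condition}, because $Q^{1/q}/(2m^{1/q})=B/2$. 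Lemma \ref{lemma:count:linear:v2} then gives
\[
\esp Y\le 3(3mq+1)\frac{c_1k}{n}+\frac{4\gamma^{-2}+8}{3n}.
\]

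\textbf{Step 3: concentration.} Apply Theorem \ref{thm:bousquet} to the centered class $f_\bfU(\bz):=\unit_{\{\|\bfU\|_\infty(\bz)>B\}}-\prob(\|\bfU\|_\infty(\bx)>B)$, which satisfies $f_\bfU\le1$. Separability follows by restricting to a countable dense set of $\bfU$, using that the indicator of an open set is lower semicontinuous in $\bfU$.
\begin{itemize}
\item Mean term: $\bar\mu\le\esp Y$.
\item Variance term: $\bar\sigma^2\le\sup_\bfU\prob(\|\bfU\|_\infty(\bx)>B)\le\esp Y$.
\end{itemize}
With $t=\log(1/\delta)$ this yields
\[
Y\le\esp Y+\bar\mu+\sqrt{2(2\bar\mu+\bar\sigma^2)t/n}+\frac{t}{3n}
\le 2\esp Y+\sqrt{6\,\esp Y\,t/n}+\frac{t}{3n}.
\]
Using $\sqrt{6ab}\le\frac{3a}{2}+b$, we get
\[
Y\le \frac72\esp Y+\frac{4t}{3n}.
\]
Substituting the bound on $\esp Y$:
\[
Y\le\frac{21}{2}(3mq+1)\frac{c_1k}{n}+\frac{7(4\gamma^{-2}+8)}{6n}+\frac{4\log(1/\delta)}{3n},
\]
and
\[
\frac{7(4\gamma^{-2}+8)}{6}=\frac{14\gamma^{-2}+28}{3}.
\]
This matches the claimed constant exactly, which suggests this is precisely the intended route.

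The only mildly delicate point is the Bousquet bookkeeping: the supremum of the empirical sum equals $Y$ minus a mean term, so one must bound the sup of the centered process rather than $Y$ directly. The constants $21/2$ and $14/3$ come out right only if the centering means are absorbed crudely by $\esp Y$, as done above.
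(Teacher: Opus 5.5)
Your proposal is correct and follows essentially the paper's proof. Both use the same three ingredients: the pointwise domination by the max-marginal counting process, Lemma~\ref{lemma:count:linear:v2} with $B=(Q/m)^{1/q}$, and Bousquet's inequality with $\bar\mu$, $\bar\sigma^2$ and the centering term all bounded crudely by the expectation, which gives the factor $\frac72$. The only cosmetic difference is that the paper applies Bousquet to the centered class of $\calI_Q$-indicators and concentrates $Z_Q$ itself (using $\esp Z_Q\le\esp Y$), whereas you concentrate the dominating process $Y$ and then invoke $Z_Q\le Y$; both routes give identical constants.
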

\begin{proof}
We start claiming that, for any $t>0$,  
\begin{align}
  Z_{t} \leq \sup_{\bfU\in\mbB_2^{m\times q}}
        \frac{1}{n}\sum_{i=1}^n\unit_{
          \left\{
          \|\bfU\|_{\infty}(\bx_i) > 
          \left(\frac{t}{m}\right)^{\nicefrac{1}{q}} 
          \right\}}. 
  \label{lemma:count:linear:v3:eq1}
\end{align}
Indeed, fix $t>0$. Given any $\bfU\in\mbB_2^{m\times q}$ and $i\in[n]$, 
$
\left|
      \sum_{j=1}^{m}
      \prod_{\ell=1}^{q}
      \langle \bx_i,\bu_{j,\ell}\rangle
    \right|\leq m\, \|\bfU\|^q_\infty(\bx_i)
$, 
yielding
\[
\calI_t\left(\sum_{j=1}^{m}
      \prod_{\ell=1}^{q}
      \langle \bx_i,\bu_{j,\ell}\rangle
    \right)\leq
  \unit_{
          \left\{
          \|\bfU\|_{\infty}(\bx_i) > 
          \left(\frac{t}{m}\right)^{\nicefrac{1}{q}} 
          \right\}}.
\]
From this fact and definition of $Z_t$ it follows that \eqref{lemma:count:linear:v3:eq1} holds. 

Next, let any $Q>0$ satisfying \eqref{lemma:count:linear:condition:B:c:v3} and $\delta\in(0,1)$.

From \eqref{lemma:count:linear:condition:B:c:v3}, Lemma \ref{lemma:count:linear:v2} with truncation threshold $B':=\left(\frac{Q}{m}\right)^{1/q}$ and \eqref{lemma:count:linear:v3:eq1}, we obtain 
\begin{align}
  \esp[Z_{Q}] \le 3( 3mq  + 1 )\frac{c_1k}{n}
    + \frac{4\gamma^{-2}+8}{3n}.
    \label{lemma:count:linear:v3:eq2}
\end{align}

We now combine this estimate with Theorem \ref{thm:bousquet}. For every $\bfU\in\mbB_2^{m\times q}$ and $i\in[n]$, define 
  $
    Y_i(\bfU) := \calI_{Q}\left(
          \sum_{j=1}^{m}
          \prod_{\ell=1}^{q}
          \langle\bx_i,\bu_{j,\ell}\rangle
          \right).
  $
We can apply Theorem \ref{thm:bousquet} with\footnote{As usual, the separability of the associated class follows by restricting the parameter $\bfU$ to a countable dense subset of the finite-dimensional Euclidean ball $\mbB_2^{m\times q}$, so that Theorem~\ref{thm:bousquet} applies to the process above.} 
$
  X_i(\bfU) := Y_i(\bfU) - \esp[Y_1(\bfU)],
$
\begin{align}
        \bar\mu :=
        \esp\left[
            \sup_{\bfU\in\mbB_2^{m\times q}}
            \frac{1}{n}\sum_{i=1}^nX_i(\bfU)
        \right]
        \mbox{ and }
        \bar\sigma^2 :=
        \sup_{\bfU\in\mbB_2^{m\times q}}
        \esp\left[\frac{1}{n}\sum_{i=1}^n
        X_i^2(\bfU)\right].
\end{align}
Indeed, for any $\bfU\in\mbB_2^{m\times q}$ and $i\in[n]$, 
$X_i(\bfU)\le1$ and therefore, by Theorem \ref{thm:bousquet}, 
with probability at least $1-\delta$, 
    \begin{align}
        \sup_{\bfU\in\mbB_2^{m\times q}}
        \frac{1}{n}\sum_{i=1}^n X_i(\bfU)
        \le \bar\mu
        +\sqrt{ 2(2\bar\mu + \bar\sigma^2)}
        \sqrt{\frac{\log(1/\delta)}{n}}
        + \frac{\log(1/\delta)}{3n}
        \le 2\bar\mu + \frac{\bar\sigma^2}{2}
        + \frac{4\log(1/\delta)}{3n}.
        \label{lemma:count:linear:v3:eq3}
    \end{align}
In the second inequality, we used the inequality 
$\sqrt{2xa} \le \frac{x}{2} + a$ with $x=2\bar\mu+\bar\sigma^2$ and 
$a=\frac{\log(1/\delta)}{n}$.

In addition, we have that $\bar\mu\le\esp[Z_{Q}]$, 
\begin{align}
  \bar\sigma^2 \le \sup_{\bfU\in\mbB_2^{m\times q}}
        \frac{1}{n}\sum_{i=1}^n \esp\left[Y_i(\bfU)\right] \le \esp[Z_{Q}]
  ~~\mbox{ and }~~
  Z_{Q} \le \sup_{\bfU\in\mbB_2^{m\times q}}
    \frac{1}{n}\sum_{i=1}^n X_i(\bfU)  + \esp[Z_{Q}], 
\end{align}
where we used that $Y_i(\bfU)\in\{0,1\}$. Combining these bounds and \eqref{lemma:count:linear:v3:eq3}, we obtain, with probability at least $1-\delta$,
\begin{align}
  Z_{Q} \le \frac{7}{2}\esp[Z_{Q}] + \frac{4\log(1/\delta)}{3n}.
\end{align}
This and \eqref{lemma:count:linear:v3:eq2} finish the proof.
\end{proof}

Choosing $\gamma^{-2}:=\erank(\bfSigma)$, we obtain the next corollary. Its proof is presented in Section \ref{ss:suplement:cor:count:linear:v3} of the supplementary material. 
\begin{corollary}[of Lemma \ref{lemma:count:linear:v3}; proof in Section \S \ref{ss:suplement:cor:count:linear:v3} of the supplement]
\label{cor:count:linear:v3}
Let $\delta\in(0,5/7)$ and $k\in[n]$ such that 
$
k\ge
(c_2^{-1}\log(1/\delta))
\vee
(c_3^{-1}\erank(\bfSigma)).
$
Assume 
  \begin{align}
    \frac{Q^{1/q}}
    {2m^{\nicefrac{1}{q}}(mq)^{\nicefrac{1}{p}}}
    \ge
    \left(\nu_p^p\frac{n}{c_1k}\right)^{\frac{1}{p}}
    \bigvee 
    \left(\|\bfSigma\|^{\frac{1}{2}}
    \left(
        \kappa_p^p\frac{n}{c_1k}
      \right)^{\frac1p}
      \sqrt{\min\left\{\sigma_p^2,
      4 \log\left( mq \frac{n}{c_1k} \right)
      \right\}}
    \right).
  \label{cor:count:linear:condition:B:c:v3}
  \end{align}
Then with probability at least $1-\delta$,
\begin{align}
Z_{(4^qQ)} \le Z_{Q} \le 
    \left(
      \frac{21}{2}\left( 3  + \frac{1}{mq} \right)c_1 
      + \frac{175}{6}c_2 + \frac{14}{3}c_3
    \right)\frac{k}{n}.
\end{align}
\end{corollary}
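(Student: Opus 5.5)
The plan is to apply Lemma~\ref{lemma:count:linear:v3} with a rescaled constant $c_1' := c_1/(mq)$ in place of $c_1$, together with the choice $\gamma^{-2} := \erank(\bfSigma)$, and then absorb the additive terms into $k/n$ using the lower bounds on $k$. The two new features of the corollary's hypothesis \eqref{cor:count:linear:condition:B:c:v3} relative to \eqref{lemma:count:linear:condition:B:c:v3} are the factor $(mq)^{1/p}$ and the $\log(mq\,n/(c_1k))$. Both are exactly what this substitution produces.

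\textbf{Step 1: checking the hypotheses of Lemma~\ref{lemma:count:linear:v3}.} We have $c_1'\le c_1<1/e$ and $\sqrt e\, c_1'k/n\le \sqrt e\, c_1 k/n\le 1$, so the standing assumptions of Remark~\ref{rem:logical:constants} hold for $c_1'$. With $\gamma^{-2}=\erank(\bfSigma)=\tr(\bfSigma)/\|\bfSigma\|$ we get $\gamma\tr^{1/2}(\bfSigma)=\|\bfSigma\|^{1/2}$. Moreover
\[
\Big(\kappa_p^p\tfrac{n}{c_1'k}\Big)^{1/p}=(mq)^{1/p}\Big(\kappa_p^p\tfrac{n}{c_1k}\Big)^{1/p},
\]
and likewise for the $\nu_p$ term. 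Finally, $\log(n/(c_1'k))=\log(mq\,n/(c_1k))$. Multiplying \eqref{cor:count:linear:condition:B:c:v3} through by $(mq)^{1/p}$ therefore gives precisely \eqref{lemma:count:linear:condition:B:c:v3} with $c_1'$.

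\textbf{Step 2: invoking the lemma.} Lemma~\ref{lemma:count:linear:v3} with $A:=4^q\ge1$ then gives, with probability at least $1-\delta$,
\[
Z_{(4^qQ)}\le Z_Q\le \frac{14\erank(\bfSigma)+28+4\log(1/\delta)}{3n}+\frac{21}{2}(3mq+1)\frac{c_1}{mq}\frac{k}{n}.
\]
The last term equals $\frac{21}{2}\big(3+\frac1{mq}\big)c_1\frac kn$, which matches the claim. The first inequality $Z_{(4^qQ)}\le Z_Q$ is also just monotonicity of $t\mapsto \calI_t$.

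\textbf{Step 3: absorbing the additive terms.} We use $\erank(\bfSigma)\le c_3k$, so $\frac{14\erank(\bfSigma)}{3n}\le\frac{14}{3}c_3\frac kn$, and $\log(1/\delta)\le c_2k$, so $\frac{4\log(1/\delta)}{3n}\le \frac43 c_2\frac kn$. The constant $28/(3n)$ is handled using $\delta<5/7$. This gives $\log(1/\delta)\ge\log(7/5)$, hence $1\le \log(1/\delta)/\log(7/5)\le c_2k/\log(7/5)$, and so $\frac{28}{3n}\le \frac{28}{3\log(7/5)}c_2\frac kn\le\frac{167}{6}c_2\frac kn$. Adding the two $c_2$ contributions gives $\big(\frac{167}{6}+\frac{8}{6}\big)c_2=\frac{175}{6}c_2$.

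The main point requiring care is Step 1: one has to recognise that the $(mq)^{1/p}$ factor and the $\log(mq\cdot)$ arise from rescaling $c_1$, and check that this rescaling preserves the standing constraints on $c_1$. The rest is the elementary numerical bookkeeping of Step 3.
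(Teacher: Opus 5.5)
Your proposal is correct and follows essentially the same route as the paper: it invokes Lemma~\ref{lemma:count:linear:v3} with $c_1\leftarrow c_1/(mq)$, $A=4^q$ and $\gamma^{-2}=\erank(\bfSigma)$, and then absorbs the additive terms using the lower bounds on $k$ and $\delta<5/7$. Your numerical check $\frac{28}{3\log(7/5)}\le\frac{167}{6}$ is exactly the paper's inequality $56+8\log(1/\delta)\le 175\log(1/\delta)$ written in another form.
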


\section{Truncation arguments}\label{s:truncation}

Fix $Q>0$. We now establish concentration of the truncated process in \eqref{equation:truncated:process}. The same smoothing strategy is used, but the truncation error is measured in magnitude rather than by an indicator.

For any $s\ge2$, we define 
\begin{align}
  \nu_s (\gamma):= (\nu_s^s + \gamma^s\{\tr(\bfSigma)\}^{s/2}\kappa_{s}^s)^{1/s}.  
  \label{def:truncated:kappa}
\end{align}

\begin{lemma}[Concentration of the truncated process; proof in \S \ref{proof:lemma:truncated:process}]\label{lemma:truncated:process}
Suppose $Q>0$ satisfies \eqref{lemma:count:linear:condition:B:c:v3} in Lemma \ref{lemma:count:linear:v3}. Then, for any $\delta\in(0,1)$, with probability at least $1-\delta$,
\begin{align}
  \epsilon_{(AQ)} &\le 2m^{\frac{r}{2q}}(AQ)^{1-\frac{r}{2q}}
    \Big(2\nu_{r}(\gamma)\Big)^{\frac r2}
    \sqrt{\frac{\gamma^{-2}+2+2\log 2}{n}} 
    + m^{\frac{r}{2q}}(AQ)^{1-\frac{r}{2q}}\nu_{r}^\frac{r}{2}\sqrt{
          \frac{2\log(2/\delta)}{n} 
        } \\
    &+\frac{18\gamma^{-2}+36+4\log 2+8\log(2/\delta)}{3n}(AQ)
    +2\left(
      20mq+10 + \frac{4^q}{A}\sqrt{\frac{\pi}{2}}
    \right)c_1\frac{k}{n}(AQ). 
\end{align}
In above, $\nu_{r}(\gamma)$ is as in \eqref{def:truncated:kappa}.
\end{lemma}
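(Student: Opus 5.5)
Write $f_{\bfU}(\bx):=\sum_{j=1}^m\prod_{\ell=1}^q\langle\bx,\bu_{j,\ell}\rangle$ and $Q':=AQ$. The plan follows the proof of Lemma \ref{lemma:count:linear:v2}, with the indicator replaced by the bounded function $\psi_{Q'}\circ f_{\bfU}$. The work splits into three parts: bound the expectation of $\epsilon_{(Q')}$ by Gaussian smoothing plus Proposition \ref{prop:bernstein:smoothed}, then get concentration from Theorem \ref{thm:bousquet}, applied to the process and to its negative.

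\textbf{Smoothing decomposition.} Let $\bfTheta\sim\Gamma_{\bfU,\gamma}$, i.e.\ $\btheta_{j,\ell}=\bu_{j,\ell}+\boldsymbol g_{j,\ell}$ with $\boldsymbol g_{j,\ell}$ i.i.d.\ $\calN(0,\gamma^2\bfI_d)$. By coordinate independence and $\esp\boldsymbol g=0$,
\[
\Gamma_{\bfU,\gamma}f_{\bfTheta}(\bx)=\sum_j\prod_\ell\langle\bx,\bu_{j,\ell}\rangle=f_{\bfU}(\bx).
\]
Hence, on the event $G_i:=\{\|\bfU\|_\infty(\bx_i)\le B\}\cap\{\gamma\|\bx_i\|_2\le B\}$ with $B:=(Q'/(4^qm))^{1/q}$, the difference
\[
\psi_{Q'}(f_{\bfU}(\bx_i))-\Gamma_{\bfU,\gamma}\psi_{Q'}(f_{\bfTheta}(\bx_i))=\Gamma_{\bfU,\gamma}\bigl[f_{\bfTheta}-\psi_{Q'}(f_{\bfTheta})\bigr](\bx_i)
\]
is a smoothing residual. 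This residual is supported where some $|\langle\bx_i,\boldsymbol g_{j,\ell}\rangle|>B$. It is bounded by Lemma \ref{lemma:normal:residual:counting:v2} and integrated against $\bx$ through $\calE_{\gamma,B}$ and Lemma \ref{lemma:exp:Q}; this is where the term $\frac{4^q}{A}\sqrt{\pi/2}\,c_1\frac kn Q'$ should come from. Off $G_i$, the difference is at most $2Q'$ times the indicator. Summing over $i$ and taking the supremum, these terms are handled by the counting estimate of Lemma \ref{lemma:count:linear:v2}, and also Lemma \ref{lemma:count:linear:v3}, since condition \eqref{lemma:count:linear:condition:B:c:v3} gives the needed threshold. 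They contribute $O(mq)c_1\frac kn Q'+O(\gamma^{-2}+1)Q'/n$. The population term $\esp\psi_{Q'}(f_{\bfU})$ is compared with $\Gamma_{\bfU,\gamma}\esp\psi_{Q'}(f_{\bfTheta})$ in the same way, now using Proposition \ref{prop:counting:linear:v2}.

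\textbf{Smoothed process.} Apply the two-sided part of Proposition \ref{prop:bernstein:smoothed} to $X_i(\bfTheta):=\psi_{Q'}(f_{\bfTheta}(\bx_i))-\esp\psi_{Q'}(f_{\bfTheta}(\bx))$, with $\bar A=2Q'$ and $\bar\mu_\gamma=0$. For the variance, use $\psi_{Q'}(t)^2\le Q'^{\,2-r/q}|t|^{r/q}$ with $r\in[q,2q]$. Hölder then gives
\[
\esp|f_{\bfTheta}(\bx)|^{r/q}\le m^{r/q}\max_{j,\ell}\esp|\langle\bx,\btheta_{j,\ell}\rangle|^{r},
\]
and $\Gamma_{\bfU,\gamma}\esp|\langle\bx,\btheta\rangle|^r\le 2^{r}\bigl(\nu_r^r+\gamma^r\sigma_r^r\kappa_r^r\tr^{r/2}(\bfSigma)\bigr)$. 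Absorbing $\sigma_r$ into constants gives $\bar\sigma_\gamma\lesssim m^{r/(2q)}Q'^{\,1-r/(2q)}(2\nu_r(\gamma))^{r/2}$. This yields the first displayed term and the $Q'/n$ term.

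\textbf{Concentration.} Apply Theorem \ref{thm:bousquet} to $\pm$ the centered class $\psi_{Q'}\circ f_{\bfU}/(2Q')$, at level $\delta/2$ each. Bound the weak variance directly by $Q'^{\,2-r/q}m^{r/q}\nu_r^{r}$, without smoothing, and use $\sqrt{2\bar\mu t}\le\bar\mu+t/2$ to absorb the expectation. This gives the $\nu_r^{r/2}\sqrt{2\log(2/\delta)/n}$ term and the $\log(2/\delta)Q'/n$ term.

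The main obstacle is the residual step: obtaining the bias $\Gamma_{\bfU,\gamma}[f_{\bfTheta}-\psi_{Q'}(f_{\bfTheta})]$ with only an $O(c_1k/n)\,Q'$ loss and no dependence on $|f|$ tails. My first attempt would bound $|f_{\bfTheta}|\unit_{|f_{\bfTheta}|>Q'}$ by expanding the product over the events $\{|\langle\bx,\boldsymbol g_{j,\ell}\rangle|>B\}$. I would then use Gaussian tail integration, $\int_B^\infty t\,\phi(t/s)\,dt/s\lesssim s\,e^{-B^2/2s^2}$, which is exactly the form of $\calE_{\gamma,B}$.
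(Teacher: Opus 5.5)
Your outline follows the paper's proof step for step:
\begin{itemize}
\item the smoothing decomposition with counting and Markov residuals is Lemma~\ref{lemma:smoothed:trunc:process:diff} together with Proposition~\ref{prop:truncated:complexity};
\item the smoothed bound is Lemma~\ref{lemma:smoothed:trunc:process}, i.e.\ Proposition~\ref{prop:bernstein:smoothed} with $\bar A=2AQ$, $\bar\mu_\gamma=0$ and the variance controlled by Lemma~\ref{lemma:variance:auxiliary};
\item the last step is the same two-sided use of Theorem~\ref{thm:bousquet} with the unsmoothed weak variance.
\end{itemize}

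\textbf{The step you leave open.} This is the one genuinely new estimate: the almost-sure bound on the smoothing residual. Lemma~\ref{lemma:normal:residual:counting:v2} does not supply it. That lemma controls a Gaussian tail probability, whereas the residual is the Gaussian mean of the excess $(|f_{\bfTheta}(\bz)|-AQ)_+$ of a degree-$q$ product of shifted Gaussians.

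The paper proves it as Lemma~\ref{lemma:normal:residual}: if $\|\bfU\|_\infty(\bz)\le\frac12(Q/m)^{1/q}$ and $\gamma\|\bz\|_2\le\frac12(Q/m)^{1/q}$, then $|f_{\bfU}(\bz)-\Gamma_{\bfU,\gamma}\psi_{AQ}(f_{\bfTheta}(\bz))|\le\frac{4^qQ}{2}\,\frac{\gamma\|\bz\|_2}{(Q/m)^{1/q}}\exp\bigl(-\frac{(Q/m)^{2/q}}{8\gamma^2\|\bz\|_2^2}\bigr)$. This is exactly the $\calE_{\gamma,B}$ form, with $B=\frac12(Q/m)^{1/q}$, that Lemma~\ref{lemma:exp:Q} then integrates. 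The paper first reduces to a single block $j$ via $(|f|-AQ)_+\le\sum_j(\prod_\ell|\langle\bz,\btheta_{j,\ell}\rangle|-Q/m)_+$. It then splits $\re^q$ according to which coordinates exceed $(Q/m)^{1/q}$ (Lemmas~\ref{lemma:normal:residual:aux}--\ref{lemma:normal:residual:general}).

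Your expansion plan is the same idea and does go through, with one change: bound $(|f|-Q')_+$ block by block, not $|f|\unit_{\{|f|>Q'\}}$. The latter couples different $j$ and costs an extra factor $m$. In fact a plain union bound over the single large coordinate suffices. Using independence across $\ell$ and $\esp[|N|\unit_{\{|N|>t\}}]=\sqrt{2/\pi}\,e^{-t^2/2}$, it gives the same bound with $2\sqrt{2/\pi}\,q$ in place of $4^q/2$.

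\textbf{Threshold scaling.} Your $B=(AQ/(4^qm))^{1/q}$ grows with $A$, but \eqref{lemma:count:linear:condition:B:c:v3} is a condition at the $A$-free scale $\frac12(Q/m)^{1/q}$. Lemma~\ref{lemma:count:linear:v2} at level $B$ needs $B/2$ above its right-hand side, which the hypothesis gives only when $A\ge4^q$.
\begin{itemize}
\item This covers $A=4^q$, the value used in Corollary~\ref{cor:truncated:process}; there your $B=(Q/m)^{1/q}$ matches the hypothesis exactly.
\item It does not cover every $A\ge1$.
\end{itemize}
The paper localizes at $\frac12(Q/m)^{1/q}$ whatever $A$ is, which is also why its residual term is $4^qQ=\frac{4^q}{A}AQ$. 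Note, though, that its own use of Lemma~\ref{lemma:count:linear:v2} at that level in Proposition~\ref{prop:truncated:complexity} needs $\frac14(Q/m)^{1/q}$ above the right-hand side. That is a factor $2$ beyond the hypothesis, so for $A$ near $1$ some constant must be adjusted in either version.

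\textbf{The factor $\sigma_r$.} You are right that $\Gamma_{\mathbf 0,\gamma}|\langle\bx,\btheta\rangle|^r=\sigma_r^r\gamma^r\|\bx\|_2^r$; the proof of Lemma~\ref{lemma:smoothed:trunc:process} uses $\gamma^r\|\bx\|_2^r$ instead. But you need not absorb $\sigma_r$: the variance bound of that lemma, and hence the first term of the statement, holds as written. Since $r/q\le2$, integrate the Gaussians first, conditionally on $\bx$:
\begin{itemize}
\item by independence across $\ell$ and Jensen, $\Gamma_{\bfU,\gamma}\prod_\ell|\langle\bx,\btheta_{j,\ell}\rangle|^{r/q}\le\prod_\ell(\langle\bx,\bu_{j,\ell}\rangle^2+\gamma^2\|\bx\|_2^2)^{r/(2q)}$;
\item H\"older in $\bx$ and Minkowski in $L^{r/2}$ then give $(\nu_r^2+\gamma^2\kappa_r^2\tr(\bfSigma))^{r/2}\le2^{r/2-1}\nu_r^r(\gamma)$ per block.
\end{itemize}
Only second Gaussian moments enter. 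This ordering also removes your implicit passage of $\Gamma_{\bfU,\gamma}$ through $\max_{j,\ell}$, which is not justified as written.
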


Choosing $\gamma^{-2}:=\erank(\bfSigma)$, we obtain the next corollary of Lemma \ref{lemma:truncated:process}. This corollary is obtained by dividing in the cases $q\le p < 2q$ and $p\ge2q$ and using Young's inequality. See Section \ref{ss:supplement:cor:truncated:process} in the supplement for its proof.
\begin{corollary}[of Lemma \ref{lemma:truncated:process}; proof in Section \S \ref{ss:supplement:cor:truncated:process} in the supplement]\label{cor:truncated:process}
Define 
$
  c_{mq} := 2\left(
        20 + \frac{10+\sqrt{\frac{\pi}{2}}}{mq}
    \right).
$
Let $\delta\in(0,5/7)$ and $k\in[n]$ such that 
$
k\ge
(c_2^{-1}\log(1/\delta))
\vee
(c_3^{-1}\erank(\bfSigma)).
$ 
Let $Q>0$ satisfying \eqref{cor:count:linear:condition:B:c:v3} in Corollary \ref{cor:count:linear:v3}.

  Then, if $p\ge2q$, with probability at least $1-\delta$,
  \begin{align}
    \epsilon_{(4^qQ)} \le 
    3m\left(4\|\bfSigma\|^{\frac{1}{2}}\kappa_{2q}\right)^q
    \sqrt{\frac{\erank(\bfSigma)+11\log(1/\delta)}{n}}
    + \left(
    c_{mq}c_1 + \frac{140}{3}c_2 + 6c_3
  \right)\frac{k}{n}(4^qQ). 
  \end{align}  

  If $q\le p < 2q$, then with probability at least $1-\delta$,
  \begin{align}
    \epsilon_{(4^qQ)} \le 3m\left(4\|\bfSigma\|^{\frac{1}{2}}\kappa_r\right)^{q} \left(
      (11c_2+c_3)\frac{k}{n}
    \right)^{1-\frac{q}{p}}
    +\left(
    c_{mq}c_1 + \frac{239}{3}c_2 + 9c_3
  \right)\frac{k}{n}(4^qQ). 
  \end{align}
\end{corollary}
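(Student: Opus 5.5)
The plan is to apply Lemma \ref{lemma:truncated:process} with $A:=4^q$, $\gamma^{-2}:=\erank(\bfSigma)$, and with $c_1$ replaced by $c_1':=c_1/(mq)$. After that it is a term-by-term simplification, split into the two regimes via the choice of $r$ ($r=2q$ when $p\ge 2q$, $r=p$ when $q\le p<2q$).

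\textbf{Step 1: verifying the hypothesis.} With $\gamma^{-2}=\erank(\bfSigma)$ we have $\gamma\tr^{1/2}(\bfSigma)=\|\bfSigma\|^{1/2}$. Also $\frac{n}{c_1'k}=mq\frac{n}{c_1k}$, so that $(\nu_p^p\frac{n}{c_1'k})^{1/p}=(mq)^{1/p}(\nu_p^p\frac{n}{c_1k})^{1/p}$ and $\log(\frac{n}{c_1'k})=\log(mq\frac{n}{c_1k})$. Hence \eqref{cor:count:linear:condition:B:c:v3} is exactly \eqref{lemma:count:linear:condition:B:c:v3} with $c_1'$ in place of $c_1$. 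Since $c_1'\le c_1$, the standing condition $\sqrt e\,c_1'k/n\le1$ still holds. With $A=4^q$ the last term of Lemma \ref{lemma:truncated:process} becomes
\[
2\bigl(20mq+10+\sqrt{\pi/2}\bigr)\frac{c_1}{mq}\frac{k}{n}(4^qQ)=c_{mq}c_1\frac kn(4^qQ),
\]
which is the $c_1$-contribution in both displays.

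\textbf{Step 2: the variance-type and $1/n$ terms.} First bound $\nu_r(\gamma)^r=\nu_r^r+\gamma^r\tr^{r/2}(\bfSigma)\kappa_r^r\le 2\|\bfSigma\|^{r/2}\kappa_r^r$, using $\nu_r\le\|\bfSigma\|^{1/2}\kappa_r$. This gives $(2\nu_r(\gamma))^{r/2}\le(4\|\bfSigma\|^{1/2}\kappa_r)^{r/2}$ up to the harmless factor $2^{1/2}$, which is absorbed in the constants. For the $1/n$ term, use $\erank(\bfSigma)\le c_3k$ and $\log(1/\delta)\le c_2k$. Because $\delta<5/7$, we have $\log(1/\delta)\ge\log(7/5)$, so the additive numerical constants $36+4\log2$ and $\log 2$ (inside $\log(2/\delta)$) can be dominated by a multiple of $\log(1/\delta)\le c_2 k$. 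This yields a term of the form $(\text{const}\cdot c_2+6c_3)\frac kn(4^qQ)$.

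\textbf{Step 3: case $p\ge2q$, with $r=2q$.} The exponent $1-\frac{r}{2q}$ vanishes, so $Q$ disappears from the first two terms. They become
\[
2m\,(4\|\bfSigma\|^{1/2}\kappa_{2q})^{q}\sqrt{\frac{\erank(\bfSigma)+2+2\log2}{n}}
\quad\text{and}\quad
m\,\|\bfSigma\|^{q/2}\kappa_{2q}^q\sqrt{\frac{2\log(2/\delta)}{n}}.
\]
Combining them by $\sqrt a+\sqrt b\le\sqrt{2(a+b)}$, and again using $\log(1/\delta)\ge\log(7/5)$ to absorb the constants, gives $3m(4\|\bfSigma\|^{1/2}\kappa_{2q})^q\sqrt{(\erank(\bfSigma)+11\log(1/\delta))/n}$.

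\textbf{Step 4: case $q\le p<2q$, with $r=p$.} Set $\theta:=1-\frac p{2q}\in(0,\tfrac12]$. The first two terms have the shape
\[
m^{p/2q}(4^qQ)^{\theta}(C\|\bfSigma\|^{1/2}\kappa_p)^{p/2}\sqrt{L/n},\qquad L\le(11c_2+c_3)k .
\]
Write this as $(4^qQ\,\tfrac kn)^{\theta}X^{1-\theta}$ and apply Young's inequality $a^\theta b^{1-\theta}\le\theta a+(1-\theta)b$. Solving for $X$ gives
\[
X=m\,(C\kappa_p\|\bfSigma\|^{1/2})^q\Bigl(\frac Ln\Bigr)^{q/p}\Bigl(\frac nk\Bigr)^{2q/p-1}\lesssim m\,(4\|\bfSigma\|^{1/2}\kappa_p)^q\Bigl((11c_2+c_3)\frac kn\Bigr)^{1-q/p}.
\]
The other Young piece, $\theta\,\frac kn(4^qQ)$, is charged to the $c_2,c_3$ coefficients; this is why those coefficients increase to $\frac{239}{3}$ and $9$.

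I expect the main obstacle to be constant bookkeeping rather than any conceptual step. Specifically, Step 4 requires choosing the Young scaling so that the prefactor $(11c_2+c_3)^{q/p}$ emerges in the form $(\cdot)^{1-q/p}$ with the stated constant $3m\cdot4^q$, and Step 3 requires checking that the constants fit under the factor $11$.
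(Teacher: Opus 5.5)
Your route is the paper's: Lemma~\ref{lemma:truncated:process} with $A=4^q$, $\gamma^{-2}=\erank(\bfSigma)$, $c_1\leftarrow c_1/(mq)$, and $r=2q$ resp.\ $r=p$. Steps 1--3 are correct up to bookkeeping. The genuine problem is the Young split in Step 4. Writing the term as $(4^qQ\,\frac kn)^{\theta}X^{1-\theta}$ leaves the Young piece $\theta\,\frac kn(4^qQ)$ with an absolute coefficient. It carries no factor $c_2$ or $c_3$, so it cannot be charged to $(\frac{239}{3}c_2+9c_3)\frac kn(4^qQ)$ as you claim. The statement's coefficient $c_{mq}c_1+\frac{239}{3}c_2+9c_3$ vanishes as $c_1,c_2,c_3\to0$, so it leaves no room for an $O(1)\cdot\frac kn(4^qQ)$ term. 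The same mis-scaling produces $(11c_2+c_3)^{q/p}$ rather than $(11c_2+c_3)^{1-q/p}$ in your $X$. These are comparable only if $11c_2+c_3\le1$, which is not assumed.

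The fix is to put $L/n$, not $k/n$, into the first Young factor. Let $L:=\erank(\bfSigma)+11\log(1/\delta)\le(11c_2+c_3)k$ and $K:=4\|\bfSigma\|^{1/2}\kappa_p$, and take $a:=4^qQ\,\frac Ln$ and $b:=mK^q(\frac Ln)^{1-q/p}$. Since $1-\theta=\frac{p}{2q}$, the powers of $L/n$ add to $\theta+\frac p{2q}(1-\frac qp)=\frac12$. Hence $a^\theta b^{1-\theta}=m^{p/(2q)}(4^qQ)^{\theta}K^{p/2}\sqrt{L/n}$ exactly. Young plus monotonicity in $L$ then gives $4^qQ\,(11c_2+c_3)\frac kn+mK^q\bigl((11c_2+c_3)\frac kn\bigr)^{1-q/p}$. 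After multiplying by the factor $3$ from merging the first two terms of the lemma, the extra $3(11c_2+c_3)\frac kn(4^qQ)$ is exactly what turns $\frac{140}{3}c_2+6c_3$ into $\frac{239}{3}c_2+9c_3$.

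Relatedly, because the constants are explicit, nothing can be ``absorbed''. There is no stray $2^{1/2}$ in Step 2: $\nu_r(\gamma)\le2^{1/r}\|\bfSigma\|^{1/2}\kappa_r\le2\|\bfSigma\|^{1/2}\kappa_r$ gives $(2\nu_r(\gamma))^{r/2}\le(4\|\bfSigma\|^{1/2}\kappa_r)^{r/2}$. The merge in Step 3 is simplest done before the case split, as the paper does. Use $2+2\log2\le11\log(1/\delta)$, $2\log(2/\delta)\le7\log(1/\delta)$ and $\nu_r\le4\|\bfSigma\|^{1/2}\kappa_r$ to bound both terms by the single radicand $L$, with coefficients $2+1=3$. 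Finally, $36+4\log2+8\log(2/\delta)\le140\log(1/\delta)$ gives the $\frac{140}{3}c_2$.
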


The proof of Lemma \ref{lemma:truncated:process} will require several steps. In \S \ref{ss:auxiliary:truncation:smoothing} we start with some auxiliary results on truncation and smoothing; the latter will allow us to apply PAC-Bayesian methods. The expectation of the smoothed empirical process is controlled in \S \ref{ss:complexity:smoothed:truncated:process}. This bound is transferred to the (non-smoothed) truncated empirical process in \S \ref{ss:complexity:truncated:process}. Finally, Lemma \ref{lemma:truncated:process} is proven in \S \ref{proof:lemma:truncated:process}.

\subsection{Auxiliary results on truncation and smoothing}
\label{ss:auxiliary:truncation:smoothing}

We start with a lemma giving a bound on the truncation bias.\footnote{Together with Corollary~\ref{cor:truncated:process}, Lemma~\ref{lemma:truncation:bias} shows that the truncation bias \eqref{equation:truncation:bias} is dominated by the fluctuations of the centered truncated process, which are of order $\frac{k}{n}Q$.} 
Its proof is given in Section \ref{ss:supplement:lemma:truncation:bias} in the supplement. 
\begin{lemma}[Truncation bias; proof in section \ref{ss:supplement:lemma:truncation:bias}]\label{lemma:truncation:bias}
\quad 
  \begin{itemize}
  \item[\rm (i)] For any $t>0$, 
  $
  \calT_t \le m^{\frac{p}{q}}\frac{\nu_p^p}{t^{\frac{p}{q}-1}}. 
  $
  \item[\rm (ii)] Let $Q>0$ such that \eqref{cor:count:linear:condition:B:c:v3} in Corollary \ref{cor:count:linear:v3} holds. Then 
  $
    \calT_{(4^qQ)} \le \left(\frac{c_1}{
      2^{3p+\frac{p}{q}}mq
    }\right)
  \frac{k}{n}(4^qQ). 
  $
\end{itemize}
\end{lemma}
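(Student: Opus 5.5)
For part (i), I would use the pointwise bound $|s-\psi_t(s)| = (|s|-t)_+ \le |s|\,\unit_{\{|s|>t\}} \le |s|^{p/q}/t^{p/q-1}$, valid because $p/q\ge1$ and, on $\{|s|>t\}$, $(|s|/t)^{p/q-1}\ge1$. Put $s:=\sum_{j=1}^m\prod_{\ell=1}^q\langle\bx,\bu_{j,\ell}\rangle$ with $\bfU\in\mbB_2^{m\times q}$. Then
\[
\calT_t\le \sup_{\bfU\in\mbB_2^{m\times q}}\frac{\esp|s|^{p/q}}{t^{\frac pq-1}}.
\]
Each $\|\bu_{j,\ell}\|_2\le1$, so $|s|\le m\|\bfU\|_\infty^q(\bx)\le m\max_{j,\ell}|\langle\bx,\bu_{j,\ell}\rangle|^q$, and hence $|s|^{p/q}\le m^{p/q}\max_{j,\ell}|\langle\bx,\bu_{j,\ell}\rangle|^p$. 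A plain maximum would cost a factor $mq$ through a sum bound. I would avoid that with the generalized H\"older inequality
\[
\esp\prod_{\ell=1}^{q}|\langle\bx,\bu_{j,\ell}\rangle|^{p/q}\le\prod_{\ell=1}^{q}\bigl(\esp|\langle\bx,\bu_{j,\ell}\rangle|^p\bigr)^{1/q}\le\nu_p^p,
\]
together with Minkowski or convexity over $j$: $\esp|\sum_j a_j|^{p/q}\le m^{p/q}\max_j\esp|a_j|^{p/q}$, since $\|\sum_j a_j\|_{L^{p/q}}\le m\max_j\|a_j\|_{L^{p/q}}$. Together these give $\esp|s|^{p/q}\le m^{p/q}\nu_p^p$, which is (i).

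For part (ii), I would apply (i) with $t=4^qQ$ and then use \eqref{cor:count:linear:condition:B:c:v3} to bound $\nu_p^p$ in terms of $Q$. Raising the first branch of \eqref{cor:count:linear:condition:B:c:v3} to the power $p$ gives
\[
\frac{Q^{p/q}}{2^p m^{p/q}\,mq}\ge \nu_p^p\frac{n}{c_1k},
\qquad\text{so}\qquad
\nu_p^p\le \frac{c_1k}{n}\,\frac{Q^{p/q}}{2^pm^{p/q}mq}.
\]
Substituting into (i), the $m^{p/q}$ factors cancel and
\[
\calT_{4^qQ}\le \frac{c_1k}{n}\,\frac{Q^{p/q}}{2^p\,mq\,(4^qQ)^{p/q-1}}
=\frac{c_1k}{n}\,\frac{4^qQ}{2^p\,mq\,4^{p}}
=\frac{c_1}{2^{3p}mq}\,\frac kn\,(4^qQ),
\]
using $(4^qQ)^{p/q}=4^pQ^{p/q}$.

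The statement has $2^{3p+p/q}$ in the denominator, slightly better than the $2^{3p}$ I get. The only delicate step is matching this constant. If a sharper bound is needed, I would use $(|s|-t)_+$ instead of $|s|\unit_{\{|s|>t\}}$, or a factor $2^{p/q}$ slack from the $2$ in \eqref{cor:count:linear:condition:B:c:v3} being applied to $Q^{1/q}$. Concretely, $(Q^{1/q}/2)^{p}$ versus $(Q/2)^{p/q}$: I would recheck the exponent bookkeeping. Otherwise the argument is routine, and the main obstacle is just this constant accounting; the structure (Markov-type tail plus H\"older) is standard.
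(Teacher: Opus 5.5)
Your proof of (i) is correct and reaches the paper's constant by a slightly different route. The paper first splits the positive part, $(|\sum_j a_j|-t)_+\le\sum_j(|a_j|-t/m)_+$. It then applies $(|Z|-\tau)_+\le|Z|^{p/q}/\tau^{p/q-1}$ to each summand at level $\tau=t/m$, paying $m\cdot m^{p/q-1}$. You instead apply the pointwise bound once to the whole sum and pay $m^{p/q}$ through Minkowski in $L^{p/q}$, which is legitimate since $p\ge q$. Generalized H\"older finishes both arguments, so they are interchangeable. Yours concentrates the $m$-dependence in a single norm inequality, which can be sharpened (see below).

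For (ii), your arithmetic is right, and it is exactly the paper's argument. The paper's proof is a single sentence: (ii) follows from (i) and the first branch of \eqref{cor:count:linear:condition:B:c:v3}. Raising $Q^{1/q}\ge 2m^{1/q}(mq)^{1/p}(\nu_p^p n/(c_1k))^{1/p}$ to the power $p$ and substituting $t=4^qQ$ into (i) gives precisely $c_1/(2^{3p}mq)$, not $c_1/(2^{3p+p/q}mq)$. So you did not overlook the missing $2^{p/q}$ on that route; (i) plus the condition simply do not deliver the stated constant. This does not matter for the paper, because the proof of Theorem \ref{thm:main} only uses $\calT_{(4^qQ_k)}\le C_0k(4^qQ_k)/n$.

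The two remedies you float would not recover the factor:
\begin{itemize}
\item The condition controls $Q^{1/q}/2$, so its $p$-th power gives exactly $2^p$, and no $(Q/2)^{p/q}$ ever appears.
\item Replacing $|s|\unit_{\{|s|>t\}}$ by $(|s|-t)_+$ gains nothing in general. When $p=q$, (i) is just a bound on $\esp|s|$, and $\esp(|s|-t)_+$ can be arbitrarily close to $\esp|s|$ if the mass of $s$ sits far beyond $t$.
\end{itemize}

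If you want the constant as stated, the slack is in the normalization. Membership $\bfU\in\mbB_2^{m\times q}$ means $\sum_{j,\ell}\|\bu_{j,\ell}\|_2^2\le1$, whereas both you and the paper only use $\|\bu_{j,\ell}\|_2\le1$. In your Minkowski step, H\"older gives $\|a_j\|_{L^{p/q}}\le\nu_p^q\prod_\ell\|\bu_{j,\ell}\|_2$. For $q\ge2$, AM--GM gives $\sum_j\prod_\ell\|\bu_{j,\ell}\|_2\le q^{-q/2}\sum_j(\sum_\ell\|\bu_{j,\ell}\|_2^2)^{q/2}\le q^{-q/2}$. Hence $\esp|s|^{p/q}\le q^{-p/2}\nu_p^p$ and $\calT_t\le q^{-p/2}\nu_p^p/t^{p/q-1}$. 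This improves (i) by the factor $m^{p/q}q^{p/2}\ge 2^{p/q}$. The substitution you already carried out then yields the stated bound, with room to spare, for every $q\ge2$, which is the range used in Theorem \ref{thm:main}.
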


Next, we define the smoothed process 
\begin{align}
    \epsilon_{\gamma,t} :=
    \sup_{\bfU\in\mbB_2^{m\times q}}
    \left|
      \Gamma_{\bfU,\gamma}\left(
          \frac{1}{n}\sum_{i=1}^n
          \psi_t\left(
          \sum_{j=1}^{m}
          \prod_{\ell=1}^{q}
          \langle\bx_i,\btheta_{j,\ell}\rangle
          \right)
          -
          \esp\left[
            \psi_t\left(
            \sum_{j=1}^{m}
            \prod_{\ell=1}^{q}
            \langle\bx,\btheta_{j,\ell}\rangle
            \right)
          \right]
      \right)
    \right|,
\end{align}
for properly chosen $t>0$, where integration is with respect to $\bfTheta=\{\btheta_{j,\ell}\}_{j\in[m],\ell\in[q]}$.

The next lemma bounds $\esp[\epsilon_{(AQ)}]$ in terms of $\esp[\epsilon_{\gamma,(AQ)}]$ and additional smoothing residuals. 

\begin{lemma}[Truncation smoothing residual in expectation; proven subsequently]\label{lemma:smoothed:trunc:process:diff}
  \begin{align}
  \esp[\epsilon_{(AQ)}]-\esp[\epsilon_{\gamma,(AQ)}]
  &\le 2AQ\sup_{\bfU\in\mbB_2^{m\times q}}
  \prob\!\left(
      \|\bfU\|_{\infty}^2(\bx) > \frac14
      \left(
        \frac{Q}{m}
      \right)^{2/q}
  \right)\\
  & + 2AQ\esp\!\left[
  \sup_{\bfU\in\mbB_2^{m\times q}}
  \probn_n\!\left(
      \|\bfU\|_{\infty}^2(\bx) > \frac14
      \left(
        \frac{Q}{m}
      \right)^{2/q}
  \right)
  \right] \\
  & +4AQ
  \prob\!\left(
      \gamma^2\|\bx\|_2^2
      >
      \frac14
      \left(
        \frac{Q}{m}
      \right)^{2/q}
  \right) \\
  & + 4^qQ
  \esp\!\left[
  \unit_{\left\{
      \gamma^2\|\bx\|_2^2
      \le
      \frac14
      \left(
        \frac{Q}{m}
      \right)^{2/q}
  \right\}}
  \frac{
      \gamma\|\bx\|_2
  }{
      \left(
        \frac{Q}{m}
      \right)^{1/q}
  }
  \exp\!\left(
      -
      \frac{
      \left(
        \frac{Q}{m}
      \right)^{2/q}
      }{
      8\gamma^2\|\bx\|_2^2
      }
  \right)
  \right].
\end{align}
\end{lemma}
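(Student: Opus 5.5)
The plan is to compare, for each fixed $\bfU\in\mbB_2^{m\times q}$, the unsmoothed centered truncated quantity with its Gaussian-smoothed analogue, and to show the difference is small outside explicitly controlled bad events. Write $f_{\bfU}(\bx):=\psi_{AQ}\bigl(\sum_{j}\prod_{\ell}\langle\bx,\bu_{j,\ell}\rangle\bigr)$ and $\Gamma_{\bfU,\gamma}f_{\bfTheta}(\bx)$ for its smoothed version. Since $|\sup|g|-\sup|h||\le\sup|g-h|$, we get
\[
\epsilon_{(AQ)}-\epsilon_{\gamma,(AQ)}\le \sup_{\bfU}\bigl|(\probn_n-\probn)(f_{\bfU}-\Gamma_{\bfU,\gamma}f_{\bfTheta})\bigr|\le \sup_{\bfU}\espn_n|f_{\bfU}-\Gamma_{\bfU,\gamma}f_{\bfTheta}|+\sup_{\bfU}\esp|f_{\bfU}-\Gamma_{\bfU,\gamma}f_{\bfTheta}|.
\]
After taking expectations, the task reduces to a pointwise bound on $|f_{\bfU}(\bx)-\Gamma_{\bfU,\gamma}f_{\bfTheta}(\bx)|$ for a fixed vector $\bx$.

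The pointwise bound splits into three regimes, using $|\psi_{AQ}|\le AQ$ throughout. On $\{\|\bfU\|_\infty(\bx)>\tfrac12(Q/m)^{1/q}\}$, bound the difference crudely by $2AQ$. This produces the empirical term (through $\espn_n$) and the population term (through $\esp$), which are exactly the first two lines of the statement. On $\{\gamma\|\bx\|_2>\tfrac12(Q/m)^{1/q}\}$, again bound by $2AQ$. Both the empirical and population sides involve only $\bx$ and not $\bfU$, so taking expectations gives $4AQ\,\prob(\cdot)$, the third line.

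On the good event, both $\|\bfU\|_\infty(\bx)\le \tfrac12(Q/m)^{1/q}$ and $\gamma\|\bx\|_2\le\tfrac12(Q/m)^{1/q}$ hold. I want to use that $\psi_{AQ}$ is inactive whenever all $|\langle\bx,\btheta_{j,\ell}\rangle|\le (Q/m)^{1/q}$ with $A\ge1$. In that case the function is the polynomial $\sum_j\prod_\ell\langle\bx,\btheta_{j,\ell}\rangle$. By coordinate independence of the product Gaussian, its $\Gamma_{\bfU,\gamma}$-mean is $\sum_j\prod_\ell\langle\bx,\bu_{j,\ell}\rangle$, which equals $f_{\bfU}(\bx)$ untruncated because $m\cdot 2^{-q}Q/m<AQ$. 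The difference therefore comes only from the event $\{\exists (j,\ell):|\langle\bx,\btheta_{j,\ell}-\bu_{j,\ell}\rangle|>\tfrac12(Q/m)^{1/q}\}$. On that event, I compare $\psi_{AQ}$ of the polynomial with the polynomial itself. The discrepancy is at most $|\text{poly}|\,\unit$, which I bound by products of $|\langle\bx,\btheta\rangle|$ with Gaussian tails.

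Here is where the $4^qQ$ factor and the residual $\exp(-(Q/m)^{2/q}/(8\gamma^2\|\bx\|_2^2))\gamma\|\bx\|_2/(Q/m)^{1/q}$ should come from. Each $\langle\bx,\btheta_{j,\ell}\rangle$ is Gaussian with mean at most $\tfrac12(Q/m)^{1/q}$ and standard deviation $\gamma\|\bx\|_2\le\tfrac12(Q/m)^{1/q}$. I expect to bound truncated moments of the form $\Gamma|\langle\bx,\btheta\rangle|\,\unit_{\{|\langle\bx,\btheta-\bu\rangle|>B/2\}}$ via Lemma~\ref{lemma:normal:residual:counting:v2}, with $B=(Q/m)^{1/q}$ and threshold $B/2$. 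This yields the exponent $B^2/(8\gamma^2\|\bx\|_2^2)$. The other factors, indexed by $\ell'\neq\ell$, are independent of the one in the tail event and have absolute first moment at most $B$. Summing over $j$ and $\ell$ and using $(2B)^q m\cdot(\dots)$ then gives a constant of order $4^q Q$ after absorbing $m B^q=Q$.

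The main obstacle is this good-event step: controlling the expectation of the untruncated polynomial on the Gaussian tail event with constants clean enough to match $4^qQ$. Because of the truncation, the difference may be a first-moment-times-tail quantity rather than a pure tail probability. If the direct bound is messy, I would first try the cruder estimate $|f-\Gamma f|\le \Gamma\bigl[(|\text{poly}|+AQ)\unit_{\text{tail}}\bigr]$ and handle $AQ\cdot$tail through the same lemma.
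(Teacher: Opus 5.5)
Your reduction $\epsilon_{(AQ)}-\epsilon_{\gamma,(AQ)}\le\sup_{\bfU}\espn_n|\Delta|+\sup_{\bfU}\esp|\Delta|$, with $\Delta=f_{\bfU}-\Gamma_{\bfU,\gamma}f_{\bfTheta}$, matches the paper's proof, and so do your crude $2AQ$ bounds on the two bad events. On the good event the paper does no new Gaussian computation. It invokes Lemma~\ref{lemma:normal:residual}, which is exactly the pointwise bound $\frac{4^qQ}{2}\cdot\frac{\gamma\|\bx\|_2}{(Q/m)^{1/q}}\exp(-(Q/m)^{2/q}/(8\gamma^2\|\bx\|_2^2))$. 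The empirical and population sides together then give the $4^qQ$ term. So the step you flag as the main obstacle is available as a lemma. Your direct derivation of it, as written, does not give the $m$-independent constant in the statement.

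Write $B=(Q/m)^{1/q}$, $\sigma=\gamma\|\bx\|_2$, $N_{j,\ell}=\langle\bx,\btheta_{j,\ell}\rangle$ with mean $\mu_{j,\ell}=\langle\bx,\bu_{j,\ell}\rangle$, $P=\sum_j\prod_\ell N_{j,\ell}$, and $E_{j,\ell}=\{|N_{j,\ell}-\mu_{j,\ell}|>B/2\}$. You take $E=\bigcup_{j,\ell}E_{j,\ell}$ and use $|P-\psi_{AQ}(P)|\unit_E\le|P|\unit_E$. Expanding $|P|\le\sum_j\prod_\ell|N_{j,\ell}|$ and $\unit_E\le\sum_{j',\ell'}\unit_{E_{j',\ell'}}$ gives $m^2q$ terms, not the $mq$ you count. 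The $m(m-1)q$ cross terms with $j'\neq j$ factor as $\Gamma[\prod_\ell|N_{j,\ell}|]\,\Gamma(E_{j',\ell'})$, each of order $B^q\cdot\frac{\sigma}{B}e^{-B^2/(8\sigma^2)}$. Their total is therefore of order $(m-1)q\,Q\,\frac{\sigma}{B}e^{-B^2/(8\sigma^2)}$, using $mB^q=Q$, which exceeds $4^qQ$ for large $m$. This loss is in $\Gamma[|P|\unit_E]$ itself, not just in how you bound it: when only a coordinate of block $j'$ is large, the truncation error is driven by block $j'$ alone, not by all of $|P|$.

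The fix is to localize before introducing tail events, as the paper does in Lemma~\ref{lemma:normal:residual:general}:
$|P-\psi_{AQ}(P)|=(|P|-AQ)_+\le(|P|-Q)_+\le\sum_j(\prod_\ell|N_{j,\ell}|-Q/m)_+$.
Since $|\mu_{j,\ell}|\le B/2$, the $j$th summand is at most $\prod_\ell|N_{j,\ell}|\sum_{\ell'}\unit_{E_{j,\ell'}}$. Your within-block independence argument now applies, using two bounds:
\begin{itemize}
\item $\Gamma|N_{j,\ell}|\le B/\sqrt2$;
\item $\Gamma[|N_{j,\ell}|\unit_{E_{j,\ell}}]\le\frac B2\Gamma(E_{j,\ell})+\Gamma[|N_{j,\ell}-\mu_{j,\ell}|\unit_{E_{j,\ell}}]\le 2\sqrt{2/\pi}\,\sigma e^{-B^2/(8\sigma^2)}$.
\end{itemize}
In the second bound, the piece $\Gamma[|N_{j,\ell}-\mu_{j,\ell}|\unit_{E_{j,\ell}}]=\sqrt{2/\pi}\,\sigma e^{-B^2/(8\sigma^2)}$ is an explicit Gaussian integral. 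It does not follow from Lemma~\ref{lemma:normal:residual:counting:v2}, which only controls tail probabilities. Summing over $j$ and $\ell'$ gives at most $2\sqrt{2/\pi}\,q\,Q\,\frac{\sigma}{B}e^{-B^2/(8\sigma^2)}\le\frac{4^qQ}{2}\cdot\frac{\sigma}{B}e^{-B^2/(8\sigma^2)}$ per side, which is what you need.

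With this correction your route is valid and shorter than the paper's proof of Lemma~\ref{lemma:normal:residual}. That proof partitions $\re^q$ according to which coordinates exceed $B$ and integrates $(\prod_\ell|t_\ell|-Q/m)_+$ explicitly (Lemmas~\ref{lemma:normal:residual:aux}--\ref{lemma:normal:residual:general}). For $m=1$, the only case used in Theorem~\ref{thm:main}, there are no cross terms and your argument works as written.
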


To prove the lemma, we require the following technical result. Its proof, based on elementary properties of the Gaussian distribution, is given in Section \ref{appendix:ss:gaussian:lemmas} of the supplementary material.  

\begin{lemma}[Truncation residual a.s.; proof in Section \S \ref{appendix:ss:gaussian:lemmas} of the supplement]\label{lemma:normal:residual}
Let $\bfU=\{\bu_{j,\ell}\}_{j\in[m],\ell\in[q]}
\in(\re^d)^{m\times q}$ and $\bz\in\re^d$ satisfy
$$
\max_{j\in[m],\,\ell\in[q]}
\langle \bz,\bu_{j,\ell}\rangle^2
\le
\frac14
\left(\frac{Q}{m}\right)^{2/q}
\mbox{ and }
\gamma^2\|\bz\|_2^2
\le
\frac14
\left(\frac{Q}{m}\right)^{2/q}.
$$

Then 
\begin{align}
\Bigg|
\sum_{j=1}^{m}
\prod_{\ell=1}^{q}
\langle \bz,\bu_{j,\ell}\rangle
-
\Gamma_{\bfU,\gamma}
\!\left[
\psi_{AQ}\!\left(
\sum_{j=1}^{m}
\prod_{\ell=1}^{q}
\langle \bz,\btheta_{j,\ell}\rangle
\right)
\right]
\Bigg|
&\le \frac{4^q Q}{2} \cdot \frac{\gamma\|\bz\|_2}
    {\left(
      \frac{Q}{m}
    \right)^{\nicefrac{1}{q}}}
\exp\!\left(
-\frac{
\left(\frac{Q}{m}\right)^{2/q}
}{
8\gamma^2\|\bz\|_2^2
}
\right).
\end{align}
\end{lemma}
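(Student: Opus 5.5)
The first step removes the polynomial part exactly. Write $a_{j,\ell}:=\langle\bz,\bu_{j,\ell}\rangle$, $\sigma:=\gamma\|\bz\|_2$ and $b:=(Q/m)^{1/q}$, so the hypotheses read $|a_{j,\ell}|\le b/2$ and $\sigma\le b/2$. Under $\Gamma_{\bfU,\gamma}$ we have $\langle\bz,\btheta_{j,\ell}\rangle=a_{j,\ell}+\sigma N_{j,\ell}$ with $\{N_{j,\ell}\}$ i.i.d.\ standard normal, because the $\btheta_{j,\ell}$ are independent. Set $P_j:=\prod_{\ell}\langle\bz,\btheta_{j,\ell}\rangle$ and $P:=\sum_j P_j$. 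Independence across $\ell$ gives $\Gamma_{\bfU,\gamma}P_j=\prod_\ell a_{j,\ell}$, and everything is integrable since these are Gaussian polynomials. Hence the quantity to bound equals
\[
\bigl|\Gamma_{\bfU,\gamma}[P-\psi_{AQ}(P)]\bigr|\le\Gamma_{\bfU,\gamma}\bigl[|P|\,\unit_{\{|P|>AQ\}}\bigr]\le\Gamma_{\bfU,\gamma}\bigl[|P|\,\unit_{\{|P|>Q\}}\bigr],
\]
where the last inequality uses $A\ge1$.

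The second step decouples the indicator from the sum over $j$, so that no cross terms between different $j$ arise. If $|P|>Q$, then
\[
\sum_j|P_j|\unit_{\{|P_j|>Q/(2m)\}}\ge|P|-m\cdot\tfrac{Q}{2m}\ge\tfrac{|P|}{2}.
\]
Therefore $|P|\unit_{\{|P|>Q\}}\le 2\sum_j|P_j|\unit_{\{|P_j|>Q/(2m)\}}$. For a fixed $j$, $|P_j|>Q/(2m)=b^q/2$ forces some $\ell$ with $|a_{j,\ell}+\sigma N_{j,\ell}|>2^{-1/q}b$, and hence $\sigma|N_{j,\ell}|>(2^{-1/q}-\tfrac12)b=:\beta b$. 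A union bound over $\ell$ together with independence across $\ell$ then gives
\[
\Gamma|P_j|\unit_{\{|P_j|>b^q/2\}}\le\sum_{\ell}\Gamma\bigl[(\tfrac b2+\sigma|N|)\unit_{\{\sigma|N|>\beta b\}}\bigr]\prod_{\ell'\neq\ell}\Gamma\bigl[\tfrac b2+\sigma|N|\bigr].
\]
Each factor with $\ell'\neq\ell$ is at most $b/2+\sigma\sqrt{2/\pi}\le b$.

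For the remaining factor I would use the Gaussian identity $\esp[|N|\unit_{\{|N|>t\}}]=\sqrt{2/\pi}\,e^{-t^2/2}$ and a Mills-type bound for $\prob(|N|>t)$ in the spirit of Lemma~\ref{lemma:normal:residual:counting:v2}. This bounds the factor by a constant times $\sigma e^{-\beta^2b^2/(2\sigma^2)}$. Summing over $\ell\in[q]$ and $j\in[m]$, and using $mb^q=Q$, yields a bound of the form $C q\,Q\,(\sigma/b)\exp(-\beta^2b^2/(2\sigma^2))$.

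The main obstacle is the constant bookkeeping needed to land on exactly $\tfrac{4^q Q}{2}\,(\sigma/b)\exp(-b^2/(8\sigma^2))$. The threshold $\beta=2^{-1/q}-\tfrac12$ is smaller than the $1/2$ implicit in the target exponent $b^2/(8\sigma^2)$. To repair this I would use the slack in $4^q$ and rebalance the decoupling threshold: replace $Q/(2m)$ by $Q/(\lambda m)$ with $\lambda$ close to $1$, at the cost of a factor $\lambda/(\lambda-1)$. Alternatively, I would compare $a+\sigma N$ with $b$ directly in the case where all other factors are at most $b$, since then $|P_j|>b^q$ already requires a single factor to exceed $b$, i.e.\ $\sigma|N|>b/2$. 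The absorption of the factor $q$ into $4^q/2$ and the choice of how to split the contribution of the large coordinate into the region $\sigma|N|\le b$ and the region $\sigma|N|>b$ are routine and can be checked at the end.
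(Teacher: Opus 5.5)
The gap is in Step~2, and it is a loss in the exponent, not in constants. Your indicator-based decoupling needs a per-block threshold $b^q/\lambda$ with $\lambda>1$. An inequality of the form $|P|\unit_{\{|P|>Q\}}\le C\sum_j|P_j|\unit_{\{|P_j|>Q/m\}}$ forces $C\ge m$: take $P_1$ slightly above $Q/m$ and all other $P_j$ slightly below it. After that, the union bound reduces you to events $\{\sigma|N_{j,\ell}|>\beta_\lambda b\}$ with $\beta_\lambda=\lambda^{-1/q}-\frac12<\frac12$. These have probability of order $\exp(-\beta_\lambda^2b^2/(2\sigma^2))$ up to polynomial factors. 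The hypotheses only give $\sigma\le b/2$, so $b/\sigma$ is unbounded. Hence the ratio $\exp\{(\frac14-\beta_\lambda^2)\,b^2/(2\sigma^2)\}$ to the target cannot be absorbed into $4^q$ or any other constant.

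Rebalancing does not rescue this. Any fixed $\lambda>1$ still loses in the exponent. Choosing $\lambda-1\asymp\sigma^2/b^2$ to neutralize the exponent costs $\lambda/(\lambda-1)\asymp b^2/\sigma^2$, which turns the prefactor $\sigma/b$ into $b/\sigma$. For $q=1$ and $\lambda=2$ you even get $\beta=0$, i.e.\ no decay at all. Your second alternative presupposes that the per-block event is $\{|P_j|>b^q\}$, which your decoupling does not deliver.

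The missing idea is to keep the positive part from Step~1 instead of passing to $|P|\unit_{\{|P|>Q\}}$. Since $|t-\psi_{AQ}(t)|=(|t|-AQ)_+\le(|t|-Q)_+$ and $mb^q=Q$,
\[
(|P|-Q)_+\le\Big(\sum_j|P_j|-Q\Big)_+\le\sum_j\big(|P_j|-b^q\big)_+\le\sum_j|P_j|\,\unit_{\{|P_j|>b^q\}},
\]
which decouples at exactly $b^q$ with no multiplicative loss; this is the reduction the paper makes.

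Now $|P_j|>b^q$ forces some $|a_{j,\ell}+\sigma N_{j,\ell}|>b$, hence $\sigma|N_{j,\ell}|>b/2$, and the rest of your computation goes through with $\beta=\frac12$:
\begin{itemize}
\item By the Mills bound and the Gaussian identity, the large factor is at most $\frac b2\prob(\sigma|N|>\frac b2)+\sigma\esp[|N|\unit_{\{\sigma|N|>b/2\}}]\le2\sqrt{2/\pi}\,\sigma e^{-b^2/(8\sigma^2)}$.
\item The other factors are at most $b$.
\item Summing over $\ell$ and $j$ gives $2q\sqrt{2/\pi}\,Q(\sigma/b)e^{-b^2/(8\sigma^2)}\le\frac{4^q}{2}\,Q(\sigma/b)e^{-b^2/(8\sigma^2)}$, since $4q\sqrt{2/\pi}\le4^q$.
\end{itemize}

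Repaired this way, your argument departs from the paper's only after the reduction. The paper partitions $\re^q$ according to which coordinates exceed $B$. It then integrates $(\prod_\ell|t_\ell|-B^q)_+$ explicitly via the binomial expansion of $\prod_\ell(u_\ell+B)-B^q$ (Lemmas~\ref{lemma:normal:residual:aux} and \ref{lemma:normal:residual:aux:2}). Your union bound plus independence is shorter and yields the better constant $2q\sqrt{2/\pi}$ in place of $4^q/2$.
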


\begin{proof}[Proof of Lemma \ref{lemma:smoothed:trunc:process:diff}]
Define 
$
f_Q(\bfU,\bz)
:= \psi_{AQ}\!\left(
\sum_{j=1}^{m}
\prod_{\ell=1}^{q}
\langle\bz,\bu_{j,\ell}\rangle
\right),
$
and 
\begin{align}
\epsilon(\bfU)
&:=
\espn_n\Big(
f_Q(\bfU,\bx)
-
\esp\big[f_Q(\bfU,\bx)\big]
\Big),\\
\epsilon_Q(\bfU)
&:=
\espn_n\!\left(
\Gamma_{\bfU,\gamma}
\Big(
f_Q(\bfTheta,\bx)
-
\esp\big[f_Q(\bfTheta,\bx)\big]
\Big)
\right).
\end{align}
Define also
$
\Delta(\bfU,\bx)
:=
f_Q(\bfU,\bx)
-
\Gamma_{\bfU,\gamma}
\!\left(
f_Q(\bfTheta,\bx)
\right).
$

Since
$
\epsilon(\bfU)
=
\epsilon_Q(\bfU)
+
\espn_n(
\Delta(\bfU,\bx)
-
\esp[\Delta(\bfU,\bx)]
),
$
it follows that
\begin{align}
\esp[\epsilon_{(AQ)}]
&\le
\esp[\epsilon_{\gamma,(AQ)}]
+
\esp\!\left[
\sup_{\bfU\in\mbB_2^{m\times q}}
\espn_n
|\Delta(\bfU,\bx)|
\right]
+
\sup_{\bfU\in\mbB_2^{m\times q}}
\esp
|\Delta(\bfU,\bx)|.
\label{lemma:smoothed:trunc:process:diff:eq1}
\end{align}

We bound the second term in
\eqref{lemma:smoothed:trunc:process:diff:eq1}.
On the event where either
\begin{align}
\gamma^2\Vert\bx\Vert_2^2 > \frac14
\left(
\frac{Q}{m}
\right)^{2/q}
\mbox{ or }
\max_{j\in[m],\,\ell\in[q]}\langle\bx,\bu_{j,\ell}\rangle^2 >
\frac14
\left(
\frac{Q}{m}
\right)^{2/q},
\end{align}
we have
\(
|\Delta(\bfU,\bx)|
\le 2AQ.
\)
On the complementary event, it is easy to check that 
$
  \left|
    \sum_{j=1}^m\prod_{\ell=1}^q\langle\bx,\bu_{j,\ell}\rangle 
  \right| \le 2^{-q}Q \le AQ,
$
for $A\ge1$; hence 
$
  f_Q(\bfU,\bx) = \sum_{j=1}^m\prod_{\ell=1}^q\langle\bx,\bu_{j,\ell}\rangle. 
$
By Lemma \ref{lemma:normal:residual}, on the complementary event,
$
|\Delta(\bfU,\bx)|
\le
\frac{4^qQ}{2}
\left(
\frac{
\gamma\Vert\bx\Vert_2
}{
\left(\frac{Q}{m}\right)^{1/q}
}
\right)
\exp\!\left(
-\frac{
\left(\frac{Q}{m}\right)^{2/q}
}{
8\gamma^2\Vert\bx\Vert_2^2
}
\right).
$

Partitioning the expectation over the two disjoint events,
\begin{align}
\sup_{\bfU\in\mbB_2^{m\times q}}
\esp
|\Delta(\bfU,\bx)|
&\le
2AQ
\sup_{\bfU\in\mbB_2^{m\times q}}
\prob\!\left(
\max_{j\in[m],\,\ell\in[q]}
\langle\bx,\bu_{j,\ell}\rangle^2
>
\frac14
\left(
\frac{Q}{m}
\right)^{2/q}
\right)
\\
&\quad
+
2AQ
\prob\!\left(
\gamma^2\Vert\bx\Vert_2^2
>
\frac14
\left(
\frac{Q}{m}
\right)^{2/q}
\right)
\\
&\quad
+
\frac{4^qQ}{2}
\esp\!\left[
\unit_{\left\{
\gamma^2\Vert\bx\Vert_2^2
\le
\frac14
\left(
\frac{Q}{m}
\right)^{2/q}
\right\}}
\left(
\frac{
\gamma\Vert\bx\Vert_2
}{
\left(\frac{Q}{m}\right)^{1/q}
}
\right)
\exp\!\left(
-\frac{
\left(\frac{Q}{m}\right)^{2/q}
}{
8\gamma^2\Vert\bx\Vert_2^2
}
\right)
\right].
\end{align}

Similarly, we bound the first term in
\eqref{lemma:smoothed:trunc:process:diff:eq1}.
We apply the same argument for the empirical distribution instead of the population distribution and then take the expectation. This yields
\begin{align}
\esp\!\left[
\sup_{\bfU\in\mbB_2^{m\times q}}
\espn_n
|\Delta(\bfU,\bx)|
\right]
&\le
2AQ
\esp\!\left[
\sup_{\bfU\in\mbB_2^{m\times q}}
\probn_n\!\left(
\max_{j\in[m],\,\ell\in[q]}
\langle\bx,\bu_{j,\ell}\rangle^2
>
\frac14
\left(
\frac{Q}{2m}
\right)^{2/q}
\right)
\right]
\\
&\quad
+
2AQ
\prob\!\left(
\gamma^2\Vert\bx\Vert_2^2
>
\frac14
\left(
\frac{Q}{m}
\right)^{2/q}
\right)
\\
&\quad
+
\frac{4^qQ}{2}
\esp\!\left[
\unit_{\left\{
\gamma^2\Vert\bx\Vert_2^2
\le
\frac14
\left(
\frac{Q}{m}
\right)^{2/q}
\right\}}
\left(
\frac{
\gamma\Vert\bx\Vert_2
}{
\left(\frac{Q}{m}\right)^{1/q}
}
\right)
\exp\!\left(
-\frac{
\left(\frac{Q}{m}\right)^{2/q}
}{
8\gamma^2\Vert\bx\Vert_2^2
}
\right)
\right].
\end{align}

To finish, we combine the previous displayed inequalities with
\eqref{lemma:smoothed:trunc:process:diff:eq1}. 
\end{proof}

\subsection{Complexity of the smoothed truncated empirical process}
\label{ss:complexity:smoothed:truncated:process}

Next, we will bound $\esp[\epsilon_{\gamma,Q}]$ using Proposition \ref{prop:bernstein:smoothed}. This is the content of Lemma \ref{lemma:smoothed:trunc:process} in the following. Before, we state a elementary lemma which will be useful to bound the variance parameter. Its proof is given in Section \ref{ss:supplement:lemma:variance:auxiliary} of the supplement. 
\begin{lemma}[Proof in \S \ref{ss:supplement:lemma:variance:auxiliary} of supplement]\label{lemma:variance:auxiliary}
  Let $\{X_{\ell,j}\}_{\ell\in[m],j\in[q]}$ be a finite collection of random variables such that 
  $
    \max_{\ell\in[m],j\in[q]}\esp[|X_{\ell,j}|^{r}]<\infty. 
  $
  Then  
  \begin{align}
    \esp\left[
      \psi_{AQ}^2\left(\sum_{j=1}^m\prod_{\ell=1}^q X_{\ell,j}\right)
    \right] &\le 
    m^{\frac{r}{q}-1} (AQ)^{2-\frac{r}{q}}
  \sum_{j=1}^m\prod_{\ell=1}^q
      \left(\esp\left[
      \left|X_{\ell,j}\right|^{r}
  \right]\right)^{\frac{1}{q}}. 
  \end{align}
\end{lemma}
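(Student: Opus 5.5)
The plan is a short chain of three pointwise or elementary inequalities: first remove the truncation at the cost of a power of $AQ$, then split the sum over $j$ by convexity, and finally split each product over $\ell$ by Hölder. Throughout write $S:=\sum_{j=1}^m\prod_{\ell=1}^q X_{\ell,j}$ and $P_j:=\prod_{\ell=1}^q X_{\ell,j}$. The standing assumption from Remark~\ref{rem:logical:constants} gives $r\in[q,2q]$, so the exponent $s:=r/q$ lies in $[1,2]$.

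\textbf{Step 1 (truncation interpolation).} For every real $t$ we have $\psi_{AQ}^2(t)=\min\{t^2,(AQ)^2\}$. We claim
\[
\min\{t^2,(AQ)^2\}\le (AQ)^{2-s}|t|^{s}.
\]
If $|t|\le AQ$, then $t^2=|t|^s|t|^{2-s}\le |t|^s(AQ)^{2-s}$, using $2-s\ge0$. If $|t|>AQ$, then $(AQ)^2=(AQ)^{2-s}(AQ)^s\le (AQ)^{2-s}|t|^s$, using $s\ge0$. Hence
\[
\esp[\psi_{AQ}^2(S)]\le (AQ)^{2-\frac rq}\,\esp|S|^{\frac rq}.
\]

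\textbf{Step 2 (splitting the sum).} Since $s\ge1$, the map $x\mapsto x^s$ is convex on $[0,\infty)$. Applying Jensen's inequality (the power-mean inequality) to $m$ nonnegative terms gives
\[
|S|^{s}\le\Big(\sum_{j=1}^m|P_j|\Big)^{s}\le m^{s-1}\sum_{j=1}^m|P_j|^{s}.
\]
This produces the factor $m^{\frac rq-1}$ in the statement.

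\textbf{Step 3 (splitting each product).} Fix $j$. Apply the generalized Hölder inequality to the $q$ nonnegative random variables $|X_{\ell,j}|^{r/q}$, $\ell\in[q]$, each with exponent $q$; the exponents are admissible because $\sum_{\ell=1}^q 1/q=1$. This gives
\[
\esp\prod_{\ell=1}^q|X_{\ell,j}|^{\frac rq}\le\prod_{\ell=1}^q\big(\esp|X_{\ell,j}|^{r}\big)^{\frac1q}.
\]
The right-hand side is finite by the moment hypothesis of the lemma.

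Combining Steps 1--3 yields the claimed bound. None of the steps is a genuine obstacle. The only point needing care is Step 1, where the interpolation inequality requires $s=r/q\in[1,2]$ (so that both $2-s\ge0$ and $s\ge0$), and Step 2, where convexity requires $s\ge1$; both are exactly guaranteed by $r\in[q,2q]$.
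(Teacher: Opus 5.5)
Your proposal is correct and takes essentially the same route as the paper: the paper also bounds $(|S|\wedge AQ)^{2}=(|S|\wedge AQ)^{2-r/q}(|S|\wedge AQ)^{r/q}\le (AQ)^{2-r/q}|S|^{r/q}$ using $r\le 2q$. It then uses convexity of $t\mapsto|t|^{r/q}$ (valid since $r\ge q$) to get the factor $m^{r/q-1}$, and finishes with the generalized H\"older inequality, which matches your three steps; your case analysis in Step 1 just spells out that factorization more explicitly.
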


\begin{lemma}[Smoothed truncated process: complexity]\label{lemma:smoothed:trunc:process}
    \begin{align}
    \esp[\epsilon_{\gamma,(AQ)}]
    \le m^{\frac{r}{2q}}(AQ)^{1-\frac{r}{2q}}\,\Big(2\nu_{r}(\gamma)\Big)^\frac{r}{2}
      \sqrt{\frac{\gamma^{-2}+2(1+\log 2)}{n}}
      + \frac{\gamma^{-2}+2(1+\log 2)}{3n}AQ.
    \end{align}
\end{lemma}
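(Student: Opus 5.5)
We will obtain the bound from the second (two-sided) statement of Proposition \ref{prop:bernstein:smoothed}. For $\bfTheta\in(\re^d)^{m\times q}$ and $i\in[n]$, set
\[
X_i(\bfTheta):=\psi_{AQ}\Big(\sum_{j=1}^m\prod_{\ell=1}^q\langle\bx_i,\btheta_{j,\ell}\rangle\Big)-\esp\Big[\psi_{AQ}\Big(\sum_{j=1}^m\prod_{\ell=1}^q\langle\bx,\btheta_{j,\ell}\rangle\Big)\Big].
\]
Then $\epsilon_{\gamma,(AQ)}=\sup_{\bfU\in\mbB_2^{m\times q}}\big|\frac1n\sum_i\Gamma_{\bfU,\gamma}X_i(\bfTheta)\big|$. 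Measurability, integrability and continuity in $\bfU$ of the Gaussian integrals are immediate because $|\psi_{AQ}|\le AQ$ and the integrand is continuous in $\bfTheta$.

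\textbf{Step 1 (parameters).} Since $\psi_{AQ}$ takes values in $[-AQ,AQ]$, we have $|X_i(\bfTheta)-\esp X_i(\bfTheta)|=|X_i(\bfTheta)|\le 2AQ$, so $\bar A=2AQ$. Plugging this in gives the last term $\frac{\gamma^{-2}+2(1+\log2)}{6n}\cdot 2AQ$, which matches the statement. Moreover $\esp X_1(\bfTheta)=0$ for every $\bfTheta$, so $\bar\mu_\gamma=0$. It therefore remains to show
\[
\bar\sigma_\gamma^2\le m^{r/q}(AQ)^{2-r/q}\big(2\nu_r(\gamma)\big)^r .
\]

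\textbf{Step 2 (variance).} We bound $\var(X_1(\bfTheta))\le\esp\,\psi_{AQ}^2(\cdots)$ and apply Lemma \ref{lemma:variance:auxiliary} with the random variables $\langle\bx,\btheta_{j,\ell}\rangle$. This gives
\[
\Gamma_{\bfU,\gamma}\var(X_1(\bfTheta))\le m^{\frac rq-1}(AQ)^{2-\frac rq}\sum_{j=1}^m\Gamma_{\bfU,\gamma}\prod_{\ell=1}^q\big(\esp|\langle\bx,\btheta_{j,\ell}\rangle|^r\big)^{1/q}.
\]
Because $\Gamma_{\bfU,\gamma}$ is a product measure, the Gaussian integral of the product factorizes into $\prod_{\ell}\Gamma_{\bu_{j,\ell},\gamma}(\esp|\langle\bx,\btheta\rangle|^r)^{1/q}$. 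This coordinate independence is the key simplification. By Jensen (concavity of $t\mapsto t^{1/q}$), each factor is at most $\big(\Gamma_{\bu_{j,\ell},\gamma}\esp|\langle\bx,\btheta\rangle|^r\big)^{1/q}$.

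\textbf{Step 3 (moment bound).} Write $\btheta=\bu+\gamma\bz$ with $\bz\sim\Gamma_{\mathbf 0,1}$. Using $|a+b|^r\le 2^{r-1}(|a|^r+|b|^r)$, we get
\[
\Gamma_{\bu,\gamma}\esp|\langle\bx,\btheta\rangle|^r\le 2^{r-1}\Big(\esp|\langle\bx,\bu\rangle|^r+\gamma^r\sigma_r^r\,\esp\|\bx\|_2^r\Big).
\]
For $\|\bu\|_2\le1$ the first term is at most $\nu_r^r$. For the second, the Fubini computation from Step 2 of the proof of Lemma \ref{lemma:exp:Q} gives $\esp\|\bx\|_2^r\le\kappa_r^r\tr^{r/2}(\bfSigma)$, with $r\ge2$.

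The main obstacle is the factor $\sigma_r^r$, which the definition of $\nu_r(\gamma)$ does not contain. To handle it, I would instead bound $\Gamma_{\mathbf 0,1}\esp|\langle\bx,\bz\rangle|^r$ directly by $\kappa_r^r\tr^{r/2}(\bfSigma)$. One route is to write $\langle\bx,\bz\rangle=\langle\bfSigma^{1/2}\bx',\bz\rangle$ and use conditioning on $\bz$ together with hypercontractivity: this gives $\esp_{\bx}|\langle\bx,\bz\rangle|^r\le\kappa_r^r\|\bfSigma^{1/2}\bz\|_2^r$, and then one needs $\Gamma_{\mathbf 0,1}\|\bfSigma^{1/2}\bz\|_2^r\lesssim\tr^{r/2}(\bfSigma)$. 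That last step again costs a Gaussian moment constant, so the $2^{r}$ slack in $(2\nu_r(\gamma))^r$ is what must absorb it, possibly after using $\kappa_r\ge1$. If this fails, the fallback is to accept a constant depending only on $r\le 2q$.

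With the moment bound in hand, the factor-wise bound is $\le (2\nu_r(\gamma))^{r/q}$, so the product over $\ell$ is $\le (2\nu_r(\gamma))^r$ and the sum over $j$ is $\le m(2\nu_r(\gamma))^r$. This yields $\bar\sigma_\gamma^2\le m^{r/q}(AQ)^{2-r/q}(2\nu_r(\gamma))^r$. Taking square roots and inserting into Proposition \ref{prop:bernstein:smoothed} gives the claim.
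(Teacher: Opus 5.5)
\textbf{Verdict: genuine gap.} Your Steps 1 and 2 reproduce the paper's argument: Proposition~\ref{prop:bernstein:smoothed} with $\bar A=2AQ$ and $\bar\mu_\gamma=0$, then Lemma~\ref{lemma:variance:auxiliary}, the product structure of $\Gamma_{\bfU,\gamma}$, and Jensen. Step~3 does not close, and it cannot be closed once the Jensen step has been taken.

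\textbf{Why Step 3 fails.} After Jensen you need $\Gamma_{\bu,\gamma}\esp|\langle\bx,\btheta\rangle|^r\le(2\nu_r(\gamma))^r$ for all $\|\bu\|_2\le1$. Since $\Gamma_{\mathbf 0,\gamma}\esp|\langle\bx,\btheta\rangle|^r=\gamma^r\sigma_r^r\esp\|\bx\|_2^r$ exactly, this is false in general. Take $d=1$ and $\bx=\pm1$ with probability $1/2$ each, so $\nu_r=\kappa_r=\tr(\bfSigma)=1$. Take also $\bu=0$, $\gamma=1$, $q=6$, $r=12$. Then the left side is $\sigma_{12}^{12}=11!!=10395$, while $(2\nu_{12}(1))^{12}=2^{13}=8192$.
\begin{itemize}
\item The same example shows that your proposed bound $\Gamma_{\mathbf 0,1}\esp|\langle\bx,\bz\rangle|^r\le\kappa_r^r\tr^{r/2}(\bfSigma)$ fails for every $r>2$.
\item The available slack is only a factor $2$ over the $2^{r-1}$ you have already spent. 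That does not even absorb $\sigma_4^4=3$.
\item Using $\kappa_r\ge1$ does not help, because the excess comes from the Gaussian, not from $\bx$.
\end{itemize}
So, as written, you only obtain your fallback, with $\gamma^r\sigma_r^r$ in place of $\gamma^r$ inside $\nu_r(\gamma)$. Your suspicion is justified. The paper's proof follows the same route and asserts $\Gamma_{\mathbf 0,\gamma}|\langle\bx,\gamma\btheta\rangle|^r=\gamma^r\|\bx\|_2^r$, which omits the factor $\sigma_r^r$ and is correct only for $r=2$.

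\textbf{The missing idea.} The Jensen step is what turns a harmless Gaussian moment of order $r/q\le2$ into one of order $r$. Skip it, and bound each factor $\Gamma_{\bu,\gamma}\big(\esp|\langle\bx,\btheta\rangle|^r\big)^{1/q}$ directly, using your own idea of conditioning on the Gaussian.

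Write $\btheta=\bu+\gamma\bz$ with $\bz\sim\Gamma_{\mathbf 0,1}$. Minkowski in $L^r(\bx)$ and the definitions of $\nu_r$ and $\kappa_r$ give, for $\|\bu\|_2\le1$,
\[
\big(\esp|\langle\bx,\btheta\rangle|^r\big)^{1/r}\le\nu_r+\gamma\kappa_r\|\bfSigma^{1/2}\bz\|_2 .
\]
Set $s:=r/q\in[1,2]$. Convexity of $t\mapsto t^s$ and Lyapunov's inequality (this is exactly where $s\le2$ is used) then yield
\[
\Gamma_{\bu,\gamma}\big(\esp|\langle\bx,\btheta\rangle|^r\big)^{1/q}\le2^{s-1}\Big(\nu_r^s+\gamma^s\kappa_r^s\big(\Gamma_{\mathbf 0,1}\|\bfSigma^{1/2}\bz\|_2^2\big)^{s/2}\Big)=2^{s-1}\Big(\nu_r^s+\gamma^s\kappa_r^s\tr^{s/2}(\bfSigma)\Big).
\]
Multiply over $\ell\in[q]$ and use $(a+b)^q\le2^{q-1}(a^q+b^q)$. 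The product is at most $2^{r-q}\cdot2^{q-1}\nu_r(\gamma)^r=2^{r-1}\nu_r(\gamma)^r$. Summing over $j$ gives
\[
\bar\sigma_\gamma^2\le m^{r/q}(AQ)^{2-r/q}\,2^{r-1}\nu_r(\gamma)^r,
\]
which is within the required $m^{r/q}(AQ)^{2-r/q}(2\nu_r(\gamma))^r$. The rest of your argument then goes through unchanged.
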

\begin{proof}
We apply Proposition \ref{prop:bernstein:smoothed} with $\bar A:=2AQ$,  $\bar\mu_\gamma = 0$,
\begin{align}
X_i(\bfTheta)
&:=
\psi_{AQ}\left(
\sum_{j=1}^{m}
\prod_{\ell=1}^{q}
\langle\bx_i,\btheta_{j,\ell}\rangle
\right)
-
\esp\left[
\psi_{AQ}\left(
\sum_{j=1}^{m}
\prod_{\ell=1}^{q}
\langle\bx,\btheta_{j,\ell}\rangle
\right)
\right],\\
\bar\sigma_\gamma^2
&:=
\sup_{\bfU\in\mbB_2^{m\times q}}
\Gamma_{\bfU,\gamma}
\var\left(
\psi_{AQ}\left(
\sum_{j=1}^{m}
\prod_{\ell=1}^{q}
\langle\bx,\btheta_{j,\ell}\rangle
\right)
\right).
\end{align}
We obtain 
\begin{align}
    \esp[\epsilon_{\gamma,(AQ)}] &\le
    \bar\sigma_\gamma
    \sqrt{\frac{\gamma^{-2}+2(1+\log 2)}{n}}
    + \frac{\gamma^{-2}+2(1+\log 2)}{6n}2AQ.
\end{align}

It remains next to obtain a bound on the variance parameter $\bar\sigma_\gamma$. By Lemma \ref{lemma:variance:auxiliary}, 
\begin{align}
  \bar\sigma_\gamma^2
  &\le \sup_{\bfU\in\mbB_2^{m\times q}}
  \Gamma_{\mathbf 0,\gamma}
  \esp\left[
  \psi_{AQ}^2\left(
  \sum_{j=1}^{m}
  \prod_{\ell=1}^{q}
  \langle\bx,\bu_{j,\ell}+\gamma\btheta_{j,\ell}\rangle
  \right)
  \right]\\ 
  &\le m^{r/q-1} (AQ)^{2-\frac{r}{q}}
  \sup_{\bfU\in\mbB_2^{m\times q}}
  \sum_{j=1}^{m}
  \Gamma_{\mathbf 0,\gamma}
  \prod_{\ell=1}^{q}
  \left(
  \esp\left[
  |\langle\bx,\bu_{j,\ell}+\gamma\btheta_{j,\ell}\rangle|^{r}
  \right]
  \right)^{1/q}\\
  &= m^{r/q-1} (AQ)^{2-\frac{r}{q}}
  \sup_{\bfU\in\mbB_2^{m\times q}}
  \sum_{j=1}^{m}
  \prod_{\ell=1}^{q}
  \Gamma_{\mathbf 0,\gamma}
  \left(
  \esp\left[
  |\langle\bx,\bu_{j,\ell}+\gamma\btheta_{j,\ell}\rangle|^{r}
  \right]
  \right)^{1/q}\\
  &\le m^{r/q-1} (AQ)^{2-\frac{r}{q}}
  \sup_{\bfU\in\mbB_2^{m\times q}}
  \sum_{j=1}^{m}
  \prod_{\ell=1}^{q}
  \left(
  \Gamma_{\mathbf 0,\gamma}
  \esp\left[
  |\langle\bx,\bu_{j,\ell}+\gamma\btheta_{j,\ell}\rangle|^{r}
  \right]
  \right)^{1/q}.
  \label{lemma:smoothed:trunc:process:eq1}
\end{align}
The equality follows from the product structure of $\Gamma_{\mathbf{0},\gamma}$ while the inequality follows from Jensen. 

For any $\bfU\in\mbB_2^{m\times q}$, using $|a+b|^r\le2^{r-1}(|a|^r+|b|^r)$, $\Gamma_{\mathbf{0},\gamma}|\langle\bx,\gamma\btheta\rangle|^r=\gamma^r\|\bx\|_2^r$, Fubini's theorem and $\esp[\|\bx\|_2^r]\le\kappa_r^r\tr^{\frac{r}{2}}(\bfSigma)$, 
\begin{align}
  \Gamma_{\mathbf 0,\gamma}
  \esp\left[
  |\langle\bx,\bu_{j,\ell}+\gamma\btheta_{j,\ell}\rangle|^{r}
  \right]
  \le
  2^{r-1}
  \left(
  \nu_{r}^{r} + \kappa_{r}^{r}\gamma^{r}\tr^\frac{r}{2}(\bfSigma)
  \right)
  = 2^{r-1}\nu_r^r(\gamma).
  \label{lemma:smoothed:trunc:process:eq2}
\end{align}
Combining the bounds, 
$
  \bar\sigma_\gamma^2 \le \,m^{\frac{r}{q}}(AQ)^{2-\frac{r}{q}}\,2^{r}\nu_r^r(\gamma). 
$

To finish, we use this bound on the bound for $\esp[\epsilon_{\gamma,(AQ)}]$.
\end{proof}

\subsection{Complexity of the truncated empirical process}
\label{ss:complexity:truncated:process}

Markov's inequality together with  Lemmas \ref{lemma:count:linear:v2}, \ref{lemma:exp:Q} and \ref{lemma:Lambert} bound the residual terms in Lemma \ref{lemma:smoothed:trunc:process:diff}. Together with Lemma \ref{lemma:smoothed:trunc:process}, we obtain a bound on $\esp[\epsilon_{(AQ)}]$. This is the content of the next proposition. 

\begin{proposition}[Truncated process: complexity]
\label{prop:truncated:complexity}
Let $Q>0$ satisfying \eqref{lemma:count:linear:condition:B:c:v3} in Lemma \ref{lemma:count:linear:v3}.
Then
\begin{align}
  \esp[\epsilon_{(AQ)}]
  &\le m^{\frac{r}{2q}}(AQ)^{1-\frac{r}{2q}}
\Big(2\nu_{r}(\gamma)\Big)^{\frac r2}
\sqrt{\frac{\gamma^{-2}+2+2\log 2}{n}}
+\frac{9\gamma^{-2}+18+2\log 2}{3n}AQ \\
& +\left(
    20mq+10 + \frac{4^q}{A}\sqrt{\frac{\pi}{2}}
  \right)c_1\frac{k}{n}AQ.
\end{align}
\end{proposition}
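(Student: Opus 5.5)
The plan is to start from Lemma~\ref{lemma:smoothed:trunc:process:diff}. It bounds $\esp[\epsilon_{(AQ)}]$ by $\esp[\epsilon_{\gamma,(AQ)}]$ plus four residual terms. Lemma~\ref{lemma:smoothed:trunc:process} bounds $\esp[\epsilon_{\gamma,(AQ)}]$ directly, and that bound already contributes the first term of the proposition together with $\frac{\gamma^{-2}+2+2\log2}{3n}AQ$. What remains is to show that the four residual terms sum to at most
\[
\frac{8\gamma^{-2}+16}{3n}AQ+\Big(20mq+10+\tfrac{4^q}{A}\sqrt{\tfrac{\pi}{2}}\Big)c_1\tfrac{k}{n}AQ .
\]
Adding the two contributions gives $\frac{9\gamma^{-2}+18+2\log 2}{3n}AQ$ as claimed. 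Throughout I set $B:=(Q/m)^{1/q}$, so that condition \eqref{lemma:count:linear:condition:B:c:v3} is exactly \eqref{lemma:count:linear:v2:condition} for this $B$. All four residual events take the form $\{\|\bfU\|_\infty(\bx)>B/2\}$ or $\{\gamma\|\bx\|_2>B/2\}$.

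\textbf{Population counting term.} By the union bound over the $mq$ directions and Markov's inequality with $B/2\ge(\nu_p^p\frac{n}{c_1k})^{1/p}$, we get $\sup_{\bfU}\prob(\|\bfU\|_\infty(\bx)>B/2)\le mq\,c_1k/n$. This contributes $2mq\,c_1\frac kn AQ$.

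\textbf{Empirical counting term.} I apply Lemma~\ref{lemma:count:linear:v2}. The event has threshold $B/2$ rather than $B$, so I need to handle the factor $2$. The simplest route is to note that the proof of Lemma~\ref{lemma:count:linear:v2} only uses \eqref{lemma:count:linear:v2:condition} through Proposition~\ref{prop:counting:linear:v2}. That proposition splits at half of the given threshold, so it can be rerun at threshold $B/2$, which requires quarter-threshold bounds. If that fails, I would instead absorb the factor 2 by slack in the constants. The target is
\[
2AQ\Big(3(3mq+1)\tfrac{c_1k}{n}+\tfrac{4\gamma^{-2}+8}{3n}\Big)=(18mq+6)c_1\tfrac kn AQ+\tfrac{8\gamma^{-2}+16}{3n}AQ .
\]
This is precisely where the $n^{-1}$ part of the extra budget comes from. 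I expect this threshold bookkeeping to be the main obstacle. If necessary I would reinterpret the $B/2$ in the event as relative to a threshold for which \eqref{lemma:count:linear:v2:condition} holds, since the lemma's hypothesis is stated with $B/2$ on the left-hand side.

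\textbf{Norm term.} Markov's inequality with $B/2\ge\gamma\tr^{1/2}(\bfSigma)(\kappa_p^p\frac n{c_1k})^{1/p}$ gives $\prob(\gamma\|\bx\|_2>B/2)\le c_1k/n$. The factor $\sqrt{\min\{\cdot\}}\ge1$ holds because $\sqrt e\,c_1k/n\le1$. This contributes $4c_1\frac knAQ$.

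\textbf{Exponential residual.} The last term equals $\frac{4^qQ}{2}\,\calE_{\gamma,B/2}$, since the integrand is $\frac{\gamma\|\bx\|_2}{B}\exp(-\frac{(B/2)^2}{2\gamma^2\|\bx\|_2^2})$, which is half of the integrand of $\calE_{\gamma,B/2}$. To apply Lemma~\ref{lemma:exp:Q} at threshold $B/2$, I use Lemma~\ref{lemma:Lambert}: since $(n/c_1k)^2\ge e$, we have $2W((n/c_1k)^2)\le4\log(n/c_1k)$. Lemma~\ref{lemma:exp:Q} then gives $\calE_{\gamma,B/2}\le2\sqrt{\pi/2}\,c_1k/n$. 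The residual is therefore at most $4^qQ\sqrt{\pi/2}\,c_1k/n=\frac{4^q}{A}\sqrt{\pi/2}\,c_1\frac knAQ$.

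\textbf{Summing.} The counting coefficients are $2mq+18mq+6+4=20mq+10$, which together with the exponential residual gives the claimed bound.
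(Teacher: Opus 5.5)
Your decomposition is the paper's: Lemma \ref{lemma:smoothed:trunc:process:diff} plus Lemma \ref{lemma:smoothed:trunc:process}, Markov for the population counting and norm terms, and Lemmas \ref{lemma:exp:Q} and \ref{lemma:Lambert} at threshold $B/2$, with $B=(Q/m)^{1/q}$, for the exponential residual. You correctly identify that residual as $\frac{4^qQ}{2}\calE_{\gamma,B/2}$. You then use Lemma \ref{lemma:count:linear:v2} for the empirical counting term. Your tally $2mq+(18mq+6)+4=20mq+10$, together with $\frac{\gamma^{-2}+2+2\log 2}{3n}+\frac{8\gamma^{-2}+16}{3n}$, reproduces the stated constants.

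The step you flag, however, is a genuine gap, and none of your fallbacks closes it. Write $R$ for the right-hand side of \eqref{lemma:count:linear:v2:condition}. Lemma \ref{lemma:count:linear:v2} at threshold $B/2$ needs $B/4\ge R$, whereas \eqref{lemma:count:linear:condition:B:c:v3} only gives $B/2\ge R$.
\begin{itemize}
\item Rerunning Proposition \ref{prop:counting:linear:v2} at $B/2$ means applying Markov at $B/4$. That yields $mq\,2^pc_1k/n$ instead of $mq\,c_1k/n$, and the bounds on $\calE_{\gamma,B/4}$ and $\prob(\gamma\|\bx\|_2>B/4)$ pick up the same factor $2^p$.
\item There is no slack to absorb a factor, because your own tally is exact.
\item The event cannot be reinterpreted. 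Its threshold $a(Q/m)^{1/q}$ comes from Lemma \ref{lemma:normal:residual:general}, which requires $a<1$ (here $a=\frac12$). But \eqref{lemma:count:linear:condition:B:c:v3} supplies the hypothesis of Lemma \ref{lemma:count:linear:v2} at threshold $aB$ only when $a\ge1$.
\end{itemize}

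You have in fact located a slip in the paper's own proof, which makes the same move without comment. It applies Lemma \ref{lemma:count:linear:v2} ``with truncation threshold $B':=\frac{Q^{1/q}}{2m^{1/q}}$ and \eqref{lemma:count:linear:condition:B:c:v3}'', although that lemma at $B'$ requires $B'/2=\frac{Q^{1/q}}{4m^{1/q}}\ge R$. There are two clean repairs.
\begin{itemize}
\item Strengthen the hypothesis to $\frac{Q^{1/q}}{4m^{1/q}}\ge R$. Then Lemma \ref{lemma:count:linear:v2} applies directly at $B/2$, your other steps go through verbatim, and the stated constants follow.
\item Keep the hypothesis and apply Lemma \ref{lemma:count:linear:v2} at $B/2$ with $c_1$ replaced by $2^pc_1$. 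Recomputing $R$ with $2^pc_1$ gives at most $R/2\le B/4$, so this is legitimate provided $\sqrt e\,2^pc_1k/n\le1$. The cost is the coefficient $6(3mq+1)2^p$ in place of $18mq+6$.
\end{itemize}
Either version is harmless downstream, because $Q_k$ in the proof of Theorem \ref{thm:main} carries the adjustable constant $C_0$.
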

\begin{proof} We will apply  the bounds of Lemmas \ref{lemma:smoothed:trunc:process:diff} and \ref{lemma:smoothed:trunc:process} together with Lemma
  \ref{lemma:count:linear:v2} and Lemmas \ref{lemma:exp:Q} and \ref{lemma:Lambert} (using that $\sqrt{e}\frac{c_1k}{n}\le1$). 
  
From Lemma \ref{lemma:count:linear:v2} with truncation threshold $B':=\frac{Q^{1/q}}{2m^{1/q}}$ and \eqref{lemma:count:linear:condition:B:c:v3},
  \begin{align}
    \esp\left[
      \sup_{\bfU\in\mbB_2^{m\times q}}
      \probn_n\left(
      \max_{j\in[m],\,\ell\in[q]}
      \langle\bx,\bu_{j,\ell}\rangle^2 >
      \frac{1}{4}\left(\frac{Q}{m}\right)^{\frac{2}{q}}
      \right)\right]
      \le 3( 3mq  + 1 )\frac{c_1k}{n}
      + \frac{4\gamma^{-2}+8}{3n}. 
    \label{prop:truncated:complexity:eq2}
  \end{align}

Lemmas \ref{lemma:exp:Q} and \ref{lemma:Lambert} with truncation threshold $B':=\frac{Q^{1/q}}{2m^{1/q}}$ and \eqref{lemma:count:linear:condition:B:c:v3} imply
  \begin{align}
  \esp\left[\unit_{
    \left\{
      \gamma^2\Vert\bx\Vert_2^2\le
      \frac{1}{4}\left(\frac{Q}{m}\right)^{\frac{2}{q}}
    \right\}
  }
    \left(\frac{2\gamma\Vert\bx\Vert_2}{
        \left(\frac{Q}{m}\right)^{\nicefrac{1}{q}}
      }\right)
      \exp\left(
            -\frac{
            \left(\frac{Q}{m}\right)^{\nicefrac{2}{q}}
            }{8\gamma^2\Vert\bx\Vert_2^2}
          \right)
  \right]
  \le 2\sqrt{\frac{\pi}{2}}c_1\frac{k}{n}. 
  \label{prop:truncated:complexity:eq1}
  \end{align}

  From \eqref{lemma:count:linear:condition:B:c:v3} and the fact that 
    $
    \sqrt{\min\{\sigma_p^2,
      4 \log( \frac{n}{c_1k} )
    \}} \ge1
      $
(since $\sqrt{e}\frac{c_1k}{n}\le1$ by assumption), we obtain 
  $
    \frac{1}{2}\left(\frac{Q}{m}\right)^{\nicefrac{1}{q}}
    \ge \gamma\tr^{\frac{1}{2}}(\bfSigma)
    (\kappa_p^p\frac{n}{c_1k})^{\nicefrac{1}{p}}.
  $
  From this fact and Markov's inequality and a union bound,
  \begin{align}
    \sup_{\bfU\in\mbB_2^{m\times q}}
    \prob\left(
      \max_{j\in[m],\,\ell\in[q]}
      \langle\bx,\bu_{j,\ell}\rangle^2 >
      \frac{1}{4}\left(\frac{Q}{m}\right)^{\nicefrac{2}{q}}
      \right)
    \le mq\frac{c_1k}{n}.
    \label{prop:truncated:complexity:eq3}
  \end{align}
  Likewise, from 
  $
    \frac{1}{2}\left(\frac{Q}{m}\right)^{\nicefrac{1}{q}}
    \ge \gamma\tr^{\frac{1}{2}}(\bfSigma)
    (\kappa_p^p\frac{n}{c_1k})^{\nicefrac{1}{p}}
  $
  and Markov's inequality, 
  \begin{align}
    \prob\left(
      \gamma^2\Vert \bx\Vert_2^2 >
      \frac{1}{4}\left(\frac{Q}{m}\right)^{\nicefrac{2}{q}}
    \right) \le \frac{c_1k}{n}.
    \label{prop:truncated:complexity:eq4}
  \end{align}

  The claimed inequality follows from combining 
  \eqref{prop:truncated:complexity:eq2}, 
  \eqref{prop:truncated:complexity:eq1}, 
  \eqref{prop:truncated:complexity:eq3} and 
  \eqref{prop:truncated:complexity:eq4} with the bounds of Lemmas 
  \ref{lemma:smoothed:trunc:process:diff} and 
  \ref{lemma:smoothed:trunc:process}.
\end{proof}

\subsection{Proof of Lemma \ref{lemma:truncated:process}}
\label{proof:lemma:truncated:process}

We are now ready to prove Lemma \ref{lemma:truncated:process}.

Define:
    \begin{align}
        X_i^{\pm}(\bfU) &:=\pm\left(
        \psi_{AQ}\!\left(
          \sum_{j=1}^{m}
          \prod_{\ell=1}^{q}
          \langle\bx_i,\bu_{j,\ell}\rangle
        \right)
        -
        \esp\!\left[
          \psi_{AQ}\!\left(
            \sum_{j=1}^{m}
            \prod_{\ell=1}^{q}
            \langle\bx,\bu_{j,\ell}\rangle
          \right)
        \right]\right),\\
        \bar\sigma^2 &:=
        \sup_{\bfU\in\mbB_2^{m\times q}}
        \esp\!\left[
          \frac{1}{n}\sum_{i=1}^{n}
          X_i^2(\bfU)
        \right],\\
        \bar\mu^{\pm} &:=
        \esp\!\left[
          \sup_{\bfU\in\mbB_2^{m\times q}}
            \frac{1}{n}\sum_{i=1}^{n}
            X_i^{\pm}(\bfU)
        \right], 
        \mbox{ and }
        \bar\mu :=
        \esp\!\left[
          \sup_{\bfU\in\mbB_2^{m\times q}}\left|
            \frac{1}{n}\sum_{i=1}^{n}
            X_i(\bfU)\right|
        \right]. 
    \end{align}
    We will apply Theorem \ref{thm:bousquet} for the classes $\mbB_2^{m\times q}\ni\bfU\mapsto\frac{X_i^+(\bfU)}{2AQ}$ and $\mbB_2^{m\times q}\ni\bfU\mapsto\frac{X_i^{-}(\bfU)}{2AQ}$. 

    For any $\bfU\in\mbB_2^{m\times q}$ and $i\in[n]$,
    $X_i^{+}(\bfU)\le 2AQ$ and $X_i^{-}(\bfU)\le 2AQ$. Theorem \ref{thm:bousquet}, taking into account the rescaling factor $2AQ$, $\bar\mu^{\pm}\le\bar\mu$ and an union bound imply that, with probability at least $1-\delta$:
    \begin{align}
        \left|
        \sup_{\bfU\in\mbB_2^{m\times q}}
        \frac1n\sum_{i=1}^{n}X_i(\bfU)
        \right|
        &\le \bar\mu + \sqrt{
          \frac{ 2\left( 4AQ\bar\mu + \bar\sigma^2 \right)\log(2/\delta)}{n} 
        }
        + \frac{\log(2/\delta)}{3n}2AQ \\
        &\le \bar\mu 
        + 2\sqrt{
          \bar\mu\frac{2AQ\log(2/\delta)}{n} 
        }
        + \bar\sigma\sqrt{
          \frac{2\log(2/\delta)}{n} 
        }
        + \frac{\log(2/\delta)}{3n}2AQ\\
        &\le 2\bar\mu 
        + \bar\sigma\sqrt{
          \frac{2\log(2/\delta)}{n} 
        }
        + \frac{8\log(2/\delta)}{3n}AQ, 
    \end{align}
  where in the last line we used $2\sqrt{xy}\le x+y$ with $x:=\bar\mu$ and $y:=\frac{2AQ\log(2/\delta)}{n}$. 

  We claim that
    $
      \bar \sigma\le m^{\frac{r}{2q}}(AQ)^{1-\frac{r}{2q}}\nu_{r}^\frac{r}{2}. 
    $
    Indeed, by Lemma \ref{lemma:variance:auxiliary}, 
    \begin{align}
      \bar\sigma^2
      &\le \sup_{\bfU\in\mbB_2^{m\times q}}
  \esp\left[
  \psi_{AQ}^2\left(
  \sum_{j=1}^{m}
  \prod_{\ell=1}^{q}
  \langle\bx,\bu_{j,\ell}\rangle
  \right)
  \right]\\
  &\le m^{\frac{r}{q}-1} (AQ)^{2-\frac{r}{q}}
  \sup_{\bfU\in\mbB_2^{m\times q}}
  \sum_{j=1}^{m}
  \prod_{\ell=1}^{q}
  \left(
  \esp\left[
  |\langle\bx,\bu_{j,\ell}\rangle|^{r}
  \right]
  \right)^{1/q}\\
  &\le m^{\frac{r}{q}} (AQ)^{2-\frac{r}{q}}\nu_r^r.
  \end{align}

  Using the bound on $\bar\sigma$,
    $
      \bar\mu = \esp[\epsilon_{(AQ)}]
    $
    and Proposition \ref{prop:truncated:complexity} in the previously obtained concentration inequality, we finish the proof.

\section{The robust estimator}\label{s:robust:estimator}

We recall the estimator $\widehat\bfT_k$ defined in \eqref{label:minimax:trimmed:estimator} in the introduction. We start with the following proposition whose proof is given in Section \ref{ss:supplement:prop:minimax:dir:trimmed:mean} of the supplement.

\begin{proposition}[Minimax trimmed-mean estimator]\label{prop:minimax:dir:trimmed:mean}
  Given any $k\in[n]$, $\widehat\bfT_k$ is well-defined and 
  $$
    \| \widehat\bfT_k(\tilde\bx_{1:n}) - \esp[\bx^{\otimes q}]\|\le2\sup_{\bu\in\mbS_2}
    \left| \sfT_k(\tilde\bx_{1:n}|\bu) - \esp[\langle\bx,\bu\rangle^q] \right|.
  $$
\end{proposition}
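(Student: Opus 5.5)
The proof has two parts: existence of a minimizer in \eqref{label:minimax:trimmed:estimator}, and a triangle-inequality argument on the sphere. Define the objective
\[
F(\bfT):=\sup_{\bu\in\mbS_2}\left|\langle\bfT,\bu^{\otimes q}\rangle-\sfT_k(\tilde\bx_{1:n}|\bu)\right|,\qquad \bfT\in\Sym^d_q(\re^d).
\]

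\textbf{Finiteness and continuity.} For fixed data, $\bu\mapsto\sfT_k(\tilde\bx_{1:n}|\bu)$ is an average of order statistics of the finitely many numbers $\langle\tilde\bx_i,\bu\rangle^q$. Each order statistic is bounded by $\max_i\|\tilde\bx_i\|_2^q$ on $\mbS_2$, so $\sfT_k$ is bounded there. Hence $F$ is finite everywhere.

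Each map $\bfT\mapsto|\langle\bfT,\bu^{\otimes q}\rangle-\sfT_k(\bu)|$ is convex and $1$-Lipschitz with respect to the operator norm, since $|\langle\bfT-\bfT',\bu^{\otimes q}\rangle|\le\|\bfT-\bfT'\|$ for $\bu\in\mbS_2$. Taking the supremum over $\bu$ preserves both properties, so $F$ is convex and $1$-Lipschitz.

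\textbf{Existence of a minimizer.} For symmetric tensors, the operator norm equals $\sup_{\bu\in\mbS_2}|\langle\bfT,\bu^{\otimes q}\rangle|$, by polarization or Banach's theorem. This is the one nontrivial input. It gives the coercivity bound
\[
F(\bfT)\ge\|\bfT\|-\sup_{\bu\in\mbS_2}|\sfT_k(\bu)|.
\]
So the sublevel set $\{F\le F(\mathbf 0)\}$ is closed and bounded in the finite-dimensional space $\Sym^d_q(\re^d)$, hence compact. A continuous function attains its infimum on a compact set, so a minimizer $\widehat\bfT_k$ exists.

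\textbf{The error bound.} Write $\bfT:=\esp[\bx^{\otimes q}]$, which satisfies $\langle\bfT,\bu^{\otimes q}\rangle=\esp\langle\bx,\bu\rangle^q$. By minimality, $F(\widehat\bfT_k)\le F(\bfT)$, and the right-hand side equals the supremum appearing in the statement. For every $\bu\in\mbS_2$, the triangle inequality gives
\[
|\langle\widehat\bfT_k-\bfT,\bu^{\otimes q}\rangle|\le F(\widehat\bfT_k)+F(\bfT)\le 2F(\bfT).
\]
Taking the supremum over $\bu$ and using the same symmetric operator-norm identity yields $\|\widehat\bfT_k-\bfT\|\le 2F(\bfT)$, which is the claim.

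The only delicate point is that identity, $\|\bfT\|=\sup_{\bu\in\mbS_2}|\langle\bfT,\bu^{\otimes q}\rangle|$ for symmetric $\bfT$. I would cite it as a known fact; if the operator norm in the paper is defined through multilinear evaluations, one needs Banach's theorem for it. If it were unavailable, one would lose a constant $q^q/q!$ from polarization instead of getting the exact factor $2$. Everything else is routine.
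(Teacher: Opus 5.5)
Your proof is correct and follows the paper's argument: test the minimality of $\widehat\bfT_k$ against the competitor $\esp[\bx^{\otimes q}]$, apply the triangle inequality direction by direction, and use the identity $\|\bfT\|=\sup_{\bu\in\mbS_2}|\langle\bfT,\bu^{\otimes q}\rangle|$ for symmetric tensors, which the paper also invokes implicitly as the first equality of its proof. The only difference is that you prove existence of a minimizer directly, via coercivity and compact sublevel sets, whereas the paper omits existence (and measurability) and defers to Proposition 3.1 of \cite{2024Oliveira:Rico}.
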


By Proposition \ref{prop:minimax:dir:trimmed:mean}, it is enough to control the trimmed mean errors uniformly. Under the \emph{counting condition}, the next lemma reduces this to controlling the truncated mean errors. See Claim~1 in \cite{2024Oliveira:Rico} for a related result for one-sided truncation of positive numbers. The proof of Lemma \ref{lemma:trim:trunc} is given in Section \ref{ss:supplement:lemma:trim:trunc} of the supplement.

\begin{lemma}[Counting $\Rightarrow$ Trimming $\approx$ truncation]\label{lemma:trim:trunc}
Fix $k\in [\lfloor(n-1)/2\rfloor]$ and $Q>0$. Let $\bz_{1:n}=\{\bz_1,\ldots,\bz_n\}$ be any data set. Denote by $\Count_{q}(\bz_{1:n},Q,k)$ the event on which the counting condition holds: for all $\bu\in\mbS_2$, 
$
  \#\{i\in[n]:|\langle\bz_i,\bu\rangle|\ge Q^{1/q}\} \le k.
$
Then, on $\Count_{q}(\bz_{1:n},Q,k)$, for any $\bu\in\mbS_2$, 
\begin{align}
  \left|
    \frac{n}{n-2k} \sfT_Q(\bz_{1:n}\mid\bu) - \sfT_k(\bz_{1:n}\mid\bu)
  \right| \le \frac{2kQ}{n-2k},
  ~~\mbox{ and }~~
  \left|\sfT_k(\bz_{1:n}\mid\bu) -\sfT_Q(\bz_{1:n}\mid\bu)\right| \leq \frac{4kQ}{k}.
\end{align}
\end{lemma}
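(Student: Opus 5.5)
The plan is purely deterministic: fix a direction, pass to the scalars $a_i:=\langle\bz_i,\bu\rangle^q$, and compare the trimmed sum with the truncated sum term by term. Fix $\bu\in\mbS_2$ and work on $\Count_q(\bz_{1:n},Q,k)$. Since $|a_i|\ge Q$ if and only if $|\langle\bz_i,\bu\rangle|\ge Q^{1/q}$, the counting condition says that at most $k$ of the $a_i$ satisfy $|a_i|\ge Q$. Let $a_{(1)}\le\cdots\le a_{(n)}$ be the order statistics. This is exactly the ordering used in $\sfT_k(\bz_{1:n}\mid\bu)$.

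\textbf{Step 1: middle order statistics are untouched by truncation.} I will show that $|a_{(i)}|<Q$ for every $k+1\le i\le n-k$. Suppose $a_{(i)}\ge Q$ for some $i\le n-k$. Then $a_{(i)},\ldots,a_{(n)}$ all satisfy $|\cdot|\ge Q$, which gives at least $n-i+1\ge k+1$ such values and contradicts the counting condition. The case $a_{(i)}\le -Q$ with $i\ge k+1$ is symmetric, using $a_{(1)},\ldots,a_{(i)}$. Consequently $\psi_Q(a_{(i)})=a_{(i)}$ on the middle block.

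\textbf{Step 2: the first inequality.} Using Step 1, write
\[
n\,\sfT_Q(\bz_{1:n}\mid\bu)=\sum_{i=k+1}^{n-k}a_{(i)}+\sum_{i\le k\text{ or }i>n-k}\psi_Q(a_{(i)})=(n-2k)\,\sfT_k(\bz_{1:n}\mid\bu)+R .
\]
The remainder $R$ has $2k$ terms, each bounded in absolute value by $Q$, so $|R|\le 2kQ$. Dividing by $n-2k\ge1$, which is positive because $k\le\lfloor (n-1)/2\rfloor$, gives the first claim.

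\textbf{Step 3: the second inequality.} By the triangle inequality,
\[
|\sfT_k-\sfT_Q|\le\Big|\sfT_k-\tfrac{n}{n-2k}\sfT_Q\Big|+\tfrac{2k}{n-2k}|\sfT_Q|\le\frac{2kQ}{n-2k}+\frac{2kQ}{n-2k}=\frac{4kQ}{n-2k},
\]
using $|\sfT_Q|\le Q$. This is presumably the intended form of the right-hand side. As literally stated, the bound is $4kQ/k=4Q$, and that even follows directly: $|\sfT_k|<Q$ by Step 1 and $|\sfT_Q|\le Q$ by definition of $\psi_Q$, so the difference is at most $2Q$.

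The only step needing care is Step 1, namely pinning the large values to the extreme order statistics. Note that $q$ may be odd, so the large values can sit at either end. The rest is bookkeeping.
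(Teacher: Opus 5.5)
Your argument is correct and is the paper's own argument: the counting condition forces every retained order statistic $k+1\le i\le n-k$ to satisfy $|\langle\bz,\bu\rangle_{(i)}^q|\le Q$, so $\psi_Q$ acts as the identity on the retained block. The identity $n\sfT_Q=(n-2k)\sfT_k+R$ with $|R|\le 2kQ$ then gives the first bound, and your handling of the literal bound $4kQ/k=4Q$ is also fine.

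One correction concerns your guess at the intended second bound. The typo $4kQ/k$ stands for $4kQ/n$, not $4kQ/(n-2k)$. This is what the paper proves, and it is what the proof of Theorem~\ref{thm:main} uses, where the term appears as $8kQ_k/n$.

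You get $4kQ/n$ by dividing the same identity by $n$ instead of $n-2k$:
\[
\sfT_k-\sfT_Q=\frac{2k\,\sfT_k-R}{n},
\]
and then using $|\sfT_k|\le Q$ and $|R|\le 2kQ$. Your triangle inequality through the first bound loses a factor $n/(n-2k)$. That loss is harmless when $k\le n/4$, as in the application, but it makes the bound much weaker when $n-2k$ is small.
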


The proof of Theorem \ref{thm:main} will follow from Proposition \ref{prop:minimax:dir:trimmed:mean}, Lemma \ref{lemma:trim:trunc} and Corollaries \ref{cor:count:linear:v3} and \ref{cor:truncated:process} (for $m=1$). 

\subsection{Proof of Theorem \ref{thm:main}}\label{ss:thm:main:proof}

\begin{remark}
  Recall the constants $C_{\probn}>0$ and $c_0\in(0,1)$ in Assumption \ref{assump:trace:estimator}. In the remainder of this section, $C\ge C_{\probn}$ and $c\in(0,c_0)$ denote constants, as stated in Theorem \ref{thm:main}, and $C_0\ge1$ is an absolute constant. The values of these constants may change from one occurrence to another.
\end{remark}

We set $m=1$. Let $q\in\mathbb{N}$, $q\ge2$. Suppose Assumptions \ref{assump:finite:covariance:main}, \ref{assump:contamination:model}  and \ref{assump:trace:estimator} hold and there is $p \ge2\vee q$ such that $\kappa_p<\infty$. Fix  $\delta\in(0,c)$, $n\ge C(\erank(\bfSigma)\vee\log(4/\delta))$ and $\epsilon\le1/C$. We define, respectively, the trimming and truncation parameters:
\begin{align}
    k &:= \left\lceil \max\Big\{A_1\erank(\bfSigma),~A_2\log\left(\nicefrac{4}{\delta}\right),~A_3(\epsilon n)\Big\}\right\rceil,
  ~~\mbox{ and }~~\\
  Q_k &:= \left(C_0q^{\frac{1}{p}}\|\bfSigma\|^{\frac{1}{2}}\kappa_p\right)^q
    \left(\frac{n}{k}\right)^{\frac{q}{p}}\left(\min\left\{
      p, \log\left( \frac{qn}{k} \right)
      \right\}\right)^{\frac{q}{2}}.
  \label{ss:thm:main:proof:k:Qk}
\end{align}
In above, $A_1,A_2,A_3\ge1$ are constants depending only on $C\ge C_{\probn}$. Within this section, their values may also change from one occurrence to another. $(k,Q_k)$ are theoretical, ensured to exist by our assumptions, but not used in our estimator. Indeed, by enlarging $C\ge C_{\probn}$ if necessary, we can assume that $1\le k<\frac{n}{2}$ as required in the trimmed mean \eqref{label:trimmed:mean:tensor:u}.

By Assumption \ref{assump:trace:estimator}, there is estimator $\widehat\erank$ depending only on $(n,\epsilon,\delta)$ such that, on an event $\calE_1$ of probability at least $1-\delta/4$,
  \begin{align}
    \frac{\erank(\bfSigma)}{3}\le\widehat{\erank}(\tilde\bx_1,\ldots,\tilde\bx_n) \le 3\erank(\bfSigma). 
    \label{proof:thm:main:eq0}
  \end{align}

Next, we will invoke the general Corollaries \ref{cor:count:linear:v3} and \ref{cor:truncated:process} to the specific setting of our estimator. From the conditions on the constants $(c_1,c_2,c_3)$, $k$ and the truncation threshold required by these corollaries and Remark \ref{rem:logical:constants}, we must ensure:
\begin{align}
     0 < c_1 < \left(\frac{n}{\sqrt{e}k}\right)\wedge \frac{1}{e},~~ \left(\frac{\log(\nicefrac{4}{\delta})}{c_2}\right)\vee\left(\frac{\erank(\bfSigma)}{c_3}\right)\le k < \left\lfloor\frac{n-1}{2}\right\rfloor,
    ~\mbox{ and \eqref{cor:count:linear:condition:B:c:v3} 
      for $(k,Q_k)$.}
\end{align}
Note that $\nu_p\le\|\bfSigma\|_2^{1/2}\kappa_p$ and $\sigma_p^2\asymp p\gtrsim \log q\gtrsim 1$. By decreasing $c\le c_0$ and enlarging $C\ge C_{\probn}$ and $C_0\ge1$ if necessary, the displayed conditions are satisfied for $(k,Q_k)$ in \eqref{ss:thm:main:proof:k:Qk} and for any $0<c_1<1/e$ and $0<c_2,c_3<1$.

Corollary \ref{cor:count:linear:v3} implies, after decreasing $0<c_1<1/e$ and $0<c_2,c_3<1$ as necessary, that on an event $\calE_2$ of probability at least $1-\delta/4$,
\begin{align}
  \sup_{\bu\in\mbS_2}\sum_{i=1}^n\unit_{\left\{|\langle\bx_i,\bu\rangle|^q > 4^qQ_k\right\}} \le nZ_{(4^qQ_k)} \le \frac{k}{C_0}.\label{proof:thm:main:eq1}
\end{align}
Corollary \ref{cor:truncated:process} and a union bound imply that, on an event $\calE_3$ of probability at least $1-\delta/2$, if $p\ge2q$, 
  \begin{align}
    \epsilon_{(4^qQ_k)} \le (C_0\|\bfSigma\|^{1/2}\kappa_{2q})^q
    \sqrt{\frac{\erank(\bfSigma)\vee\log(4/\delta)}{n}}
    + C_0\frac{k(4^qQ_k)}{n},
    \label{proof:thm:main:eq4}
  \end{align}
  and if $q\le p \le 2q$,
  \begin{align}
    \epsilon_{(4^qQ_k)} \le (C_0\|\bfSigma\|^{1/2}\kappa_p)^{q} \left(
      \frac{k}{n}
    \right)^{1-\frac{q}{p}}
    + C_0\frac{k(4^qQ_k)}{n}. 
    \label{proof:thm:main:eq5}
  \end{align}

The rest of the proof will happen on the event $\calE_1\cap\calE_2\cap\calE_3$ of probability at least $1-\delta$. 

Recall our estimator defined in \eqref{label:minimax:trimmed:estimator:hat}. By enlarging $C\ge C_{\probn}$ if necessary, we can assume $\widehat k < n/2$ and, thus,   
\begin{align}
  \widehat\bfT_*:=\widehat\bfT_{\widehat k} 
  \quad \mbox{ with }\quad 
  \widehat k := \left\lceil \max\Big\{3A_1\widehat\erank,~A_2\log\left(\nicefrac{4}{\delta}\right),~A_3(\epsilon n)\Big\}\right\rceil.
\end{align}
From \eqref{proof:thm:main:eq0}, $\widehat k\ge k \ge A_3(\epsilon n)$. This fact, \eqref{proof:thm:main:eq1} and Assumption \ref{assump:contamination:model} imply, after we enlarge $A_3\ge1$ and $C\ge C_{\probn}$ if necessary, 
\begin{align}
  \sup_{\bu\in\mbS_2}\sum_{i=1}^n\unit_{\left\{|\langle\tilde\bx_i,\bu\rangle|^q > 4^qQ_k\right\}}
  \le \sup_{\bu\in\mbS_2}\sum_{i=1}^n\unit_{\left\{|\langle\bx_i,\bu\rangle|^q > 4^qQ_k\right\}} + \epsilon n 
  \le \frac{k}{C_0} + \frac{k}{A_3} 
  \le \frac{k}{C_0}
  \le \frac{\widehat k}{C_0}
  \le \widehat k.
\end{align}
Thus, $\Count_{q}(\tilde\bx_{1:n},4^qQ_k,\widehat k)$ holds. 

Proposition \ref{prop:minimax:dir:trimmed:mean} and the triangle inequality imply
\begin{align}
  \| \widehat\bfT_*(\tilde\bx_{1:n}) - \esp[\bx^{\otimes q}]\|
  &\le 2\sup_{\bu\in\mbS_2}
    \left| \sfT_{\widehat k}(\tilde\bx_{1:n}|\bu) 
    - \esp[\langle\bx,\bu\rangle^q] 
    \right|\\
  &\le 2\sup_{\bu\in\mbS_2}
    \left| 
      \sfT_{\widehat k}(\tilde\bx_{1:n}|\bu) - \sfT_{(4^qQ_k)}(\tilde\bx_{1:n}|\bu)
    \right| \\
  &+ 2\sup_{\bu\in\mbS_2}
    \left| 
      \sfT_{(4^qQ_k)}(\tilde\bx_{1:n}|\bu) 
      - \esp[\langle\bx,\bu\rangle^q]
    \right|.
    \label{proof:thm:main:eq2}
\end{align}

Lemma \ref{lemma:trim:trunc}, with the fact that $\Count_{q}(\tilde\bx_{1:n},4^qQ_k,\widehat k)$ holds, implies
\begin{align}
  2\sup_{\bu\in\mbS_2}
    \left| 
      \sfT_{\widehat k}(\tilde\bx_{1:n}|\bu) - \sfT_{(4^qQ_k)}(\tilde\bx_{1:n}|\bu)
    \right|
  &\le \frac{8kQ_k}{n}.
  \label{proof:thm:main:eq6}
\end{align}

From Lemma \ref{lemma:truncation:bias}(ii), noting that condition \eqref{cor:count:linear:condition:B:c:v3} holds for our choice of $(k,Q_k)$, imply 
$
  \calT_{(4^qQ_k)} \le C_0\frac{k(4^qQ_k)}{n}.
$
This fact, Assumption \ref{assump:contamination:model}, $k \ge A_3(\epsilon n)$ and the triangle inequality imply
\begin{align}
  2\sup_{\bu\in\mbS_2}
    \left| 
      \sfT_{(4^qQ_k)}(\tilde\bx_{1:n}|\bu) - \esp[\langle\bx,\bu\rangle^q] 
    \right|
  &\le 2\sup_{\bu\in\mbS_2}
    \left| 
      \sfT_{(4^qQ_k)}(\bx_{1:n}|\bu) - \esp[\langle\bx,\bu\rangle^q] 
    \right| \\
  & + 2\frac{2(\epsilon n)(4^qQ_k)}{n} \\
  &\le 2\epsilon_{(4^qQ_k)} + 2\calT_{(4^qQ_k)} + C_0\frac{k(4^qQ_k)}{n}\\
  &\le 2\epsilon_{(4^qQ_k)} + C_0\frac{k(4^qQ_k)}{n}.
  \label{proof:thm:main:eq3}
\end{align}

We conclude from \eqref{proof:thm:main:eq4}, \eqref{proof:thm:main:eq5}, \eqref{proof:thm:main:eq2}, \eqref{proof:thm:main:eq6} and \eqref{proof:thm:main:eq3} that 
\begin{align}
  \| \widehat\bfT_k(\tilde\bx_{1:n}) - \esp[\bx^{\otimes q}]\|
  &\le \Delta_k + C_0\frac{k(4^qQ_k)}{n},
  \label{proof:thm:main:eq5'}
\end{align}
where 
\begin{align}
  \Delta_k := \begin{cases}
    (C_0\|\bfSigma\|^{1/2}\kappa_{2q})^q
    \sqrt{\frac{\erank(\bfSigma)\vee \log(4/\delta)}{n}}, & \mbox{ if } p \ge 2q,\\
    (C_0\|\bfSigma\|^{1/2}\kappa_p)^{q} \left(
      \frac{k}{n}
    \right)^{1-\frac{q}{p}}, & \mbox{ if } q\le p < 2q.
    \label{proof:thm:main:eq5''}
  \end{cases}
\end{align}

By definition of $Q_k$ in \eqref{ss:thm:main:proof:k:Qk}, 
\begin{align}
  \frac{k(4^qQ_k)}{n}
  \le \left(C_0q^{\frac{1}{p}}\|\bfSigma\|^{\frac{1}{2}}\kappa_p\right)^q \left(
      \frac{k}{n}
    \right)^{1-\frac{q}{p}}
    \left(\min\left\{
      p, \log\left( \frac{qn}{k} \right)
      \right\}\right)^{\frac{q}{2}}.
\end{align}

Suppose first $A_3(\epsilon n)\ge (A_1\erank(\bfSigma))\vee(A_2\log(4/\delta))$; then $k\asymp A_3(\epsilon n)$, entailing
\begin{align}
  \frac{k(4^qQ_k)}{n} &\lesssim  
  \left(C_0q^{\frac{1}{p}}\|\bfSigma\|^{\frac{1}{2}}\kappa_p\right)^q\epsilon^{1-\frac{q}{p}}
  \left(\min\left\{
      p, \log\left(\frac{q}{\epsilon} \right)
      \right\}\right)^{\frac{q}{2}}
  \mbox{ and},\\
  \left(C_0q^{\frac{1}{p}}\|\bfSigma\|^{\frac{1}{2}}\kappa_p\right)^q \left(
      \frac{k}{n}
    \right)^{1-\frac{q}{p}}
  &\lesssim \left(C_0q^{\frac{1}{p}}\|\bfSigma\|^{\frac{1}{2}}\kappa_p\right)^q\epsilon^{1-\frac{q}{p}}.
  \label{proof:thm:main:eq7}
\end{align}
Suppose now that $A_3(\epsilon n)\le (A_1\erank(\bfSigma))\vee(A_2\log(4/\delta))$; then $k \asymp (A_1\erank(\bfSigma))\vee(A_2\log(4/\delta))$ entailing 
\begin{align}
  \frac{k(4^qQ_k)}{n} \lesssim  
  \left(C_0q^{\frac{1}{p}}\|\bfSigma\|^{\frac{1}{2}}\kappa_p\right)^q\left(
    \frac{\erank(\bfSigma)\vee \log(4/\delta)}{n}
  \right)^{1-\frac{q}{p}}
  \left(\min\left\{
      p, \log\left(\frac{qn}{\erank(\bfSigma)\vee\log(4/\delta)} \right)
      \right\}\right)^{\frac{q}{2}}, 
  \label{proof:thm:main:eq8}
\end{align}
and 
\begin{align}
  \left(C_0q^{\frac{1}{p}}\|\bfSigma\|^{\frac{1}{2}}\kappa_p\right)^q \left(
      \frac{k}{n}
    \right)^{1-\frac{q}{p}}
  \lesssim \left(C_0q^{\frac{1}{p}}\|\bfSigma\|^{\frac{1}{2}}\kappa_p\right)^q\left(
    \frac{\erank(\bfSigma)\vee\log(4/\delta)}{n}
  \right)^{1-\frac{q}{p}}.
  \label{proof:thm:main:eq9}
\end{align}

When $p\ge2q$ it suffices to choose $p = 2q$. This implies that the displayed bounds in \eqref{proof:thm:main:eq8} and \eqref{proof:thm:main:eq9} are at most 
\begin{align}
  C_q\|\bfSigma\|^{\frac{q}{2}}\kappa_{2q}^{q}\left(
    \frac{\erank(\bfSigma)\vee \log(4/\delta)}{n}
  \right)^{\frac{1}{2}},
  \label{proof:thm:main:eq10}
\end{align}
for a constant $C_q>0$ that depends only on $q$.

When $q \le p < 2q$, the displayed bounds in \eqref{proof:thm:main:eq8} and \eqref{proof:thm:main:eq9} are at most 
\begin{align}
  C_q\|\bfSigma\|^{\frac{q}{2}}\kappa_{2q}^{q}\left(
    \frac{\erank(\bfSigma)\vee\log(4/\delta)}{n}
  \right)^{1-\frac{q}{p}},
  \label{proof:thm:main:eq11}
\end{align}
for a constant $C_q>0$ that depends only on $q$.

Using \eqref{proof:thm:main:eq7}, \eqref{proof:thm:main:eq8}, \eqref{proof:thm:main:eq9}, \eqref{proof:thm:main:eq10} and \eqref{proof:thm:main:eq11} in \eqref{proof:thm:main:eq5'}-\eqref{proof:thm:main:eq5''}, we finish the proof.

\bibliographystyle{plain} 
\bibliography{robust_simple_tensor_estimation_references}

@article{2021Lim, 
title={Tensors in computations}, 
volume={30}, 
DOI={10.1017/S0962492921000076}, 
journal={Acta Numerica}, 
author={Lim, Lek-Heng}, 
year={2021}, 
pages={555–764}
}

@book{2025Ballard:Kolda, 
place={Cambridge}, 
title={Tensor Decompositions for Data Science}, 
publisher={Cambridge University Press}, 
author={Ballard, Grey and Kolda, Tamara G.}, 
year={2025}}

@article{2021Bi:Tang:Yuan:Zhang:Qu,
   author = "Bi, Xuan and Tang, Xiwei and Yuan, Yubai and Zhang, Yanqing and Qu, Annie",
   title = "Tensors in Statistics", 
   journal= "Annual Review of Statistics and Its Application",
   year = "2021",
   volume = "8",
   number = "Volume 8, 2021",
   pages = "345-368",
   doi = "https://doi.org/10.1146/annurev-statistics-042720-020816"
  }

@article{2025Auddy:Xia:Yuan,
   author = "Auddy, Arnab and Xia, Dong and Yuan, Ming",
   title = "Tensors in High-Dimensional Data Analysis: Methodological Opportunities and Theoretical Challenges", 
   journal= "Annual Review of Statistics and Its Application",
   year = "2025",
   volume = "12",
   number = "Volume 12, 2025",
   pages = "527-551",
   doi = "https://doi.org/10.1146/annurev-statistics-112723-034548"
  }

@article{2007Guedon:Rudelson,
title = {Lp-moments of random vectors via majorizing measures},
journal = {Advances in Mathematics},
volume = {208},
number = {2},
pages = {798-823},
year = {2007},
doi = {https://doi.org/10.1016/j.aim.2006.03.013},
author = {Olivier Guédon and Mark Rudelson}
}

@article{2008Mendelson,
  author  = {Shahar Mendelson},
  title   = {On Weakly Bounded Empirical Processes},
  journal = {Mathematische Annalen},
  volume  = {340},
  number  = {2},
  pages   = {293--314},
  year    = {2008},
  doi     = {10.1007/s00208-007-0149-3},
}

@article{2011Vershynin,
author = {Roman Vershynin},
title = {{Approximating the moments of marginals of high-dimensional distributions}},
volume = {39},
journal = {The Annals of Probability},
number = {4},
pages = {1591 -- 1606},
year = {2011},
doi = {10.1214/10-AOP589}
}

@article{2020Vershynin,
author = {Roman Vershynin},
title = {Concentration inequalities for random tensors},
volume = {26},
journal = {Bernoulli},
number = {4},
pages = {3139 -- 3162},
year = {2020},
doi = {10.3150/20-BEJ1218}
}

@InProceedings{2021Even:Massoulie,
  title = 	 {Concentration of Non-Isotropic Random Tensors with Applications to Learning and Empirical Risk Minimization},
  author =       {Even, Mathieu and Massoulie, Laurent},
  booktitle = 	 {Proceedings of Thirty Fourth Conference on Learning Theory},
  pages = 	 {1847--1886},
  year = 	 {2021},
  volume = 	 {134},
  series = 	 {Proceedings of Machine Learning Research},
  month = 	 {15--19 Aug},
  publisher =    {PMLR},
  url = 	 {https://proceedings.mlr.press/v134/even21a.html}
}

@article{2021Mendelson,
title = {Approximating Lp unit balls via random sampling},
journal = {Advances in Mathematics},
volume = {386},
pages = {107829},
year = {2021},
doi = {https://doi.org/10.1016/j.aim.2021.107829},
author = {Shahar Mendelson}
}

@article{2021Gotze:Holger:Sambale:Sinulis,
author = {Friedrich G{\"o}tze and Holger Sambale and Arthur Sinulis},
title = {{Concentration inequalities for polynomials in $\alpha$-sub-exponential random variables}},
volume = {26},
journal = {Electronic Journal of Probability},
number = {none},
pages = {1 -- 22},
year = {2021},
doi = {10.1214/21-EJP606}
}

@InProceedings{2023Sambale,
author="Sambale, Holger",
title="Some Notes on Concentration for $\alpha$-Subexponential Random Variables",
booktitle="High Dimensional Probability IX",
year="2023",
publisher="Springer International Publishing",
pages="167--192",
doi="10.1007/978-3-031-26979-0_7"
}

@article{2024Zhivotovskiy,
author = {Nikita Zhivotovskiy},
title = {{Dimension-free bounds for sums of independent matrices and simple tensors via the variational principle}},
volume = {29},
journal = {Electronic Journal of Probability},
number = {none},
publisher = {Institute of Mathematical Statistics and Bernoulli Society},
pages = {1 -- 28},
year = {2024},
doi = {10.1214/23-EJP1021}
}

@article{2025Al-Ghattas:Chen:Sanz-Alonso:SharpConcentration,
    author = {Al-Ghattas, Omar and Chen, Jiaheng and Sanz-Alonso, Daniel},
    title = {Sharp concentration of simple random tensors},
    journal = {Information and Inference: A Journal of the IMA},
    volume = {14},
    number = {4},
    pages = {iaaf029},
    year = {2025},
    doi = {10.1093/imaiai/iaaf029}
}

@article{2026Chen:Sanz-Alonso:SharpConcentrationAsymmetry,
    author = {Chen, Jiaheng and Sanz-Alonso, Daniel},
    title = {Sharp concentration of simple random tensors II: asymmetry},
    journal = {Information and Inference: A Journal of the IMA},
    volume = {15},
    number = {2},
    pages = {iaag010},
    year = {2026},
    doi = {10.1093/imaiai/iaag010}
}

@article{2025Al-Ghattas:Chen:Sanz-Alonso,
  author  = {Omar Al-Ghattas and Jiaheng Chen and Daniel Sanz-Alonso},
  title   = {On the estimation of Gaussian moment tensors},
  journal = {Electronic Communications in Probability},
  year    = {2025},
  volume  = {30},
  number  = {none},
  pages   = {1--15},
  doi     = {10.1214/25-ECP734}
}

@article{2025Abdalla:Vershynin,
    author = {Abdalla, Pedro and Vershynin, Roman},
    doi = {10.1007/s10959-025-01458-1},
    journal = {Journal of Theoretical Probability},
	number = {1},
	pages = {3},
	title = {On the Dimension-Free Concentration of Simple Tensors via Matrix Deviation},
	volume = {39},
	year = {2025}
}

@article{2026Bartl:Mendelson,
title = {Uniform mean estimation via generic chaining},
journal = {Advances in Mathematics},
volume = {493},
pages = {110918},
year = {2026},
issn = {0001-8708},
doi = {https://doi.org/10.1016/j.aim.2026.110918},
author = {Daniel Bartl and Shahar Mendelson}
}

@article{2025Shang,
    author = {Zong Shang},
    doi = {https://arxiv.org/abs/2511.06338},
    journal = {Arxiv preprint},
	title = {Upper bounds for the $L^q$ empirical process via generic chaining},
	year = {2025}
}

@article{2018Chen:Gao:Ren,
author = {Mengjie Chen and Chao Gao and Zhao Ren},
title = {{Robust covariance and scatter matrix estimation under Huber’s contamination model}},
volume = {46},
journal = {The Annals of Statistics},
number = {5},
publisher = {Institute of Mathematical Statistics},
pages = {1932 -- 1960},
year = {2018},
doi = {10.1214/17-AOS1607}
}

@article{2018Minsker,
author = {Stanislav Minsker},
title = {{Sub-Gaussian estimators of the mean of a random matrix with heavy-tailed entries}},
volume = {46},
journal = {The Annals of Statistics},
number = {6A},
pages = {2871 -- 2903},
year = {2018},
doi = {10.1214/17-AOS1642},
URL = {https://doi.org/10.1214/17-AOS1642}
}

@article{2020Minsker:Wei,
author = {Stanislav Minsker and Xiaohan Wei},
title = {{Robust modifications of U-statistics and applications to covariance estimation problems}},
volume = {26},
journal = {Bernoulli},
number = {1},
pages = {694 -- 727},
year = {2020},
doi = {10.3150/19-BEJ1149}
}

@InProceedings{2019Ostrovskii:Rudi,
  title = {Affine Invariant Covariance Estimation for Heavy-Tailed Distributions},
  author =  {Ostrovskii, Dmitrii M. and Rudi, Alessandro},
  booktitle = {Proceedings of the Thirty-Second Conference on Learning Theory},
  pages = 	 {2531--2550},
  year = 	 {2019},
  volume = 	 {99},
  series = 	 {Proceedings of Machine Learning Research},
  publisher =    {PMLR}
}

@article{2018Catoni:Giulini,
  author        = {Olivier Catoni and Ilaria Giulini},
  title         = {Dimension-free {PAC}-{B}ayesian Bounds for the Estimation of the Mean of a Random Vector},
  journal       = {arXiv preprint arXiv:1802.04308},
  year          = {2018},
  archivePrefix = {arXiv},
  eprint        = {1802.04308},
  primaryClass  = {math.ST},
  doi           = {10.48550/arXiv.1802.04308},
}

@article{2018Giulini,
  author  = {Ilaria Giulini},
  title   = {Robust Dimension-Free Gram Operator Estimates},
  journal = {Bernoulli},
  volume  = {24},
  number  = {4B},
  pages   = {3864--3923},
  year    = {2018},
  doi     = {10.3150/17-BEJ972},
}

@article{2018Tikhomirov,
    author = {Tikhomirov, Konstantin},
    title = {Sample Covariance Matrices of Heavy-Tailed Distributions},
    journal = {International Mathematics Research Notices},
    volume = {2018},
    number = {20},
    pages = {6254-6289},
    year = {2018},
    doi = {10.1093/imrn/rnx067}
}

@article{2020Mendelson:Zhivotovskiy,
author = {Shahar Mendelson and Nikita Zhivotovskiy},
title = {Robust covariance estimation under $L_{4}-L_{2}$ norm equivalence},
volume = {48},
journal = {The Annals of Statistics},
number = {3},
pages = {1648 -- 1664},
year = {2020},
doi = {10.1214/19-AOS1862}
}

@article{2026Abdalla:Zhivotovskiy,
  author  = {Pedro Abdalla and Nikita Zhivotovskiy},
  title   = {Covariance Estimation: Optimal Dimension-Free Guarantees for Adversarial Corruption and Heavy Tails},
  journal = {Journal of the European Mathematical Society},
  year    = {2026},
  volume  = {28},
  number  = {4},
  pages   = {1809--1847},
  doi     = {10.4171/JEMS/1505}
}

@article{2024Oliveira:Rico,
author = {Roberto I. Oliveira and Zoraida F. Rico},
title = {Improved covariance estimation: Optimal robustness and sub-Gaussian guarantees under heavy tails},
volume = {52},
journal = {The Annals of Statistics},
number = {5},
pages = {1953 -- 1977},
year = {2024},
doi = {10.1214/24-AOS2407}
}

@article{2021Lugosi:Mendelson,
author = {G{\'a}bor Lugosi and Shahar Mendelson},
title = {Robust multivariate mean estimation: The optimality of trimmed mean},
volume = {49},
journal = {The Annals of Statistics},
number = {1},
pages = {393 -- 410},
year = {2021},
doi = {10.1214/20-AOS1961}
}

@article{1999Rudelson,
title = {Random Vectors in the Isotropic Position},
journal = {Journal of Functional Analysis},
volume = {164},
number = {1},
pages = {60-72},
year = {1999},
issn = {0022-1236},
doi = {https://doi.org/10.1006/jfan.1998.3384},
author = {M. Rudelson}
}

@article{2005Klartag:Mendelson,
title = {Empirical processes and random projections},
author = {B. Klartag and S. Mendelson},
journal = {Journal of Functional Analysis},
volume = {225},
number = {1},
pages = {229-245},
year = {2005},
doi = {https://doi.org/10.1016/j.jfa.2004.10.009}
}

@article{2007MendelsonPajorTomczak-Jaegermann,
  author  = {Shahar Mendelson and Alain Pajor and Nicole Tomczak-Jaegermann},
  title   = {Reconstruction and Subgaussian Operators in Asymptotic Geometric Analysis},
  journal = {Geometric and Functional Analysis},
  volume  = {17},
  number  = {4},
  pages   = {1248--1282},
  year    = {2007},
  doi     = {10.1007/s00039-007-0618-7},
}

@article{2010Mendelson,
	author = {Mendelson, Shahar},
	doi = {10.1007/s00039-010-0084-5},
	journal = {Geometric and Functional Analysis},
	number = {4},
	pages = {988--1027},
	title = {Empirical Processes with a Bounded $\Psi_1$ Diameter},
	url = {https://doi.org/10.1007/s00039-010-0084-5},
	volume = {20},
	year = {2010}
}

@article{2010Adamczak:Litvak:Pajor:Tomczak-Jaegermann,
  author  = {Rados{\l}aw Adamczak and Alexander E. Litvak and Alain Pajor and Nicole Tomczak-Jaegermann},
  title   = {Quantitative Estimates of the Convergence of the Empirical Covariance Matrix in Log-Concave Ensembles},
  journal = {Journal of the American Mathematical Society},
  volume  = {23},
  number  = {2},
  pages   = {535--561},
  year    = {2010},
  doi     = {10.1090/S0894-0347-09-00650-X},
}

@article{2012Mendelson:Paouris,
  author  = {Shahar Mendelson and Grigoris Paouris},
  title   = {On Generic Chaining and the Smallest Singular Value of Random Matrices with Heavy Tails},
  journal = {Journal of Functional Analysis},
  volume  = {262},
  number  = {9},
  pages   = {3775--3811},
  year    = {2012},
  doi     = {10.1016/j.jfa.2012.01.025},
}

@article{2012Vershynin,
	author = {Vershynin, Roman},
	doi = {10.1007/s10959-010-0338-z},
	journal = {Journal of Theoretical Probability},
	number = {3},
	pages = {655--686},
	title = {How Close is the Sample Covariance Matrix to the Actual Covariance Matrix?},
	volume = {25},
	year = {2012},
  }

@article{2013Srivastava:Vershynin,
author = {Nikhil Srivastava and Roman Vershynin},
title = {{Covariance estimation for distributions with ${2+\varepsilon}$ moments}},
volume = {41},
journal = {The Annals of Probability},
number = {5},
pages = {3081 -- 3111},
year = {2013},
doi = {10.1214/12-AOP760},
}

@article{2014Mendelson:Paouris,
author = {Shahar Mendelson and Grigoris Paouris},
title = {{On the singular values of random matrices}},
volume = {16},
journal = {J. Eur. Math. Soc.},
number = {4},
publisher = {Bernoulli Society for Mathematical Statistics and Probability},
pages = {823 -- 834},
year = {2014},
doi = {10.4171/JEMS/448}
}

@article{2014Bednorz,
  author        = {Witold Bednorz},
  title         = {Concentration via Chaining Method and Its Applications},
  journal       = {arXiv preprint arXiv:1405.0676},
  year          = {2014},
  archivePrefix = {arXiv},
  eprint        = {1405.0676},
  primaryClass  = {math.PR},
  doi           = {10.48550/arXiv.1405.0676},
}

@article{2015Dirksen,
  author  = {Sjoerd Dirksen},
  title   = {Tail Bounds via Generic Chaining},
  journal = {Electronic Journal of Probability},
  volume  = {20},
  pages   = {1--29},
  year    = {2015},
  doi     = {10.1214/EJP.v20-3760},
}

@article{2016Mendelson,
  author  = {Shahar Mendelson},
  title   = {Upper bounds on product and multiplier empirical processes},
  journal = {Stochastic Processes and their Applications},
  volume  = {126},
  number = {12},
  pages   = {3652--3680},
  year    = {2016},
  doi     = {?},
}

@Inbook{2017Mendelson,
author="Mendelson, Shahar",
title="On Multiplier Processes Under Weak Moment Assumptions",
bookTitle="Geometric Aspects of Functional Analysis: Israel Seminar (GAFA) 2014--2016",
year="2017",
publisher="Springer International Publishing",
pages="301--318",
isbn="978-3-319-45282-1",
doi="10.1007/978-3-319-45282-1_19"
}

@article{2014Lounici,
author = {Karim Lounici},
title = {{High-dimensional covariance matrix estimation with missing observations}},
volume = {20},
journal = {Bernoulli},
number = {3},
publisher = {Bernoulli Society for Mathematical Statistics and Probability},
pages = {1029 -- 1058},
year = {2014},
doi = {10.3150/12-BEJ487},
}

@article{2017Koltchinskii:Lounici,
author = {Vladimir Koltchinskii and Karim Lounici},
title = {{Concentration inequalities and moment bounds for sample covariance operators}},
volume = {23},
journal = {Bernoulli},
number = {1},
pages = {110 -- 133},
year = {2017},
doi = {10.3150/15-BEJ730}
}

@Inbook{2017Liaw:Mehrabian:Plan:Vershynin,
author="Liaw, Christopher
and Mehrabian, Abbas
and Plan, Yaniv
and Vershynin, Roman",
editor="Klartag, Bo'az
and Milman, Emanuel",
title="A Simple Tool for Bounding the Deviation of Random Matrices on Geometric Sets",
bookTitle="Geometric Aspects of Functional Analysis: Israel Seminar (GAFA) 2014--2016",
year="2017",
publisher="Springer International Publishing",
address="Cham",
pages="277--299",
doi="10.1007/978-3-319-45282-1_18"
}

@incollection{2017vanHandel,
  author    = {Ramon van Handel},
  title     = {Structured Random Matrices},
  booktitle = {Convexity and Concentration},
  editor    = {Eric Carlen and Mokshay Madiman and Elisabeth Werner},
  series    = {The IMA Volumes in Mathematics and its Applications},
  volume    = {161},
  pages     = {107--156},
  publisher = {Springer},
  address   = {New York},
  year      = {2017},
  doi       = {10.1007/978-1-4939-7005-6_4},
}

@article{2022Jeong:Li:Plan:Yilmaz,
author = {Jeong, Halyun and Li, Xiaowei and Plan, Yaniv and Yilmaz, Ozgur},
title = {Sub-Gaussian Matrices on Sets: Optimal Tail Dependence and Applications},
journal = {Communications on Pure and Applied Mathematics},
volume = {75},
number = {8},
pages = {1713-1754},
doi = {https://doi.org/10.1002/cpa.22024},
year = {2022}
}

@book{2013Boucheron:Lugosi:Massart,
    author = {Boucheron, Stéphane and Lugosi, Gábor and Massart, Pascal},
    title = {Concentration Inequalities: A Nonasymptotic Theory of Independence},
    publisher = {Oxford University Press},
    year = {2013},
    doi = {10.1093/acprof:oso/9780199535255.001.0001}
}

@inproceedings{2020Diakonikolas:Kane:Pensia,
 author = {Diakonikolas, Ilias and Kane, Daniel M. and Pensia, Ankit},
 booktitle = {Advances in Neural Information Processing Systems},
 pages = {1830--1840},
 publisher = {Curran Associates, Inc.},
 title = {Outlier Robust Mean Estimation with Subgaussian Rates via Stability},
 url = {https://proceedings.neurips.cc/paper_files/paper/2020/file/13ec9935e17e00bed6ec8f06230e33a9-Paper.pdf},
 volume = {33},
 year = {2020}
}

@article{2019Diakonikolas:Kamath:Kane:Li:Moitra:Stewart,
author = {Diakonikolas, Ilias and Kamath, Gautam and Kane, Daniel and Li, Jerry and Moitra, Ankur and Stewart, Alistair},
title = {Robust Estimators in High-Dimensions Without the Computational Intractability},
journal = {SIAM Journal on Computing},
volume = {48},
number = {2},
pages = {742--864},
year = {2019},
doi = {10.1137/17M1126680}
}

@article{2018Minsker:uniform,
  author       = {Stanislav Minsker},
  title        = {Uniform Bounds for Robust Mean Estimators},
  journal      = {arXiv preprint arXiv:1812.03523},
  year         = {2018},
  eprint       = {1812.03523},
  archivePrefix= {arXiv},
  primaryClass = {math.ST},
  version      = {4},
  url          = {https://arxiv.org/abs/1812.03523v4}
}

@article{2025Minsker,
  author  = {Stanislav Minsker},
  title   = {Uniform Bounds for Robust Mean Estimators},
  journal = {Stochastic Processes and their Applications},
  volume  = {190},
  pages   = {104724},
  year    = {2025},
  doi     = {10.1016/j.spa.2025.104724}
}

@article{2002Bousquet,
  author  = {Bousquet, Olivier},
  title   = {A {Bennett} Concentration Inequality and Its Application to Suprema of Empirical Processes},
  journal = {Comptes Rendus Math{\'e}matique},
  volume  = {334},
  number  = {6},
  pages   = {495--500},
  year    = {2002},
  doi     = {10.1016/S1631-073X(02)02292-6}
}

\clearpage
\section{Supplementary Material}\label{s:supplement}

This supplement presents the omitted proofs of results stated in the main text: Lemma \ref{lemma:normal:residual:counting:v2}, Lemma \ref{lemma:normal:residual}, Corollary \ref{cor:count:linear:v3}, Corollary \ref{cor:truncated:process}, Lemma \ref{lemma:truncation:bias}, Lemma \ref{lemma:variance:auxiliary}, Proposition \ref{prop:minimax:dir:trimmed:mean}, Lemma \ref{lemma:trim:trunc}, Proposition \ref{prop:optimal:hypercontractive}. It also presents a proof diagram in Section~\ref{ss:proof:diagram}.

\subsection{Proof of Lemma \ref{lemma:normal:residual:counting:v2}}\label{ss:suplement:lemma:normal:residual:counting:v2}

Let $\sigma:=\gamma^2\Vert\bz\Vert_2^2$. Then
\begin{align}
  \Gamma_{\mathbf{0},\gamma}\unit_{\{|\langle\bz,\btheta\rangle| > B\}}
  = 2\int_{B}^\infty\frac{\exp(
      -\frac{t^2}{2\sigma}
    )}{\sqrt{2\pi \sigma}}d t
  = \frac{2\exp\left(
      -\frac{B^2}{2\sigma}
    \right)}{\sqrt{2\pi \sigma}}
    I(B,\sigma), 
\end{align}
where we have defined 
$
I(B,\sigma)
:=
\exp\left(\frac{B^2}{2\sigma}\right)
\int_B^\infty
\exp\left(-\frac{t^2}{2\sigma}\right)\,dt.
$
Hence, for the inequality stated in the lemma to hold it is enough to show
\[
  \frac{\sigma B}{B^2+\sigma}
  \le I(B,\sigma) \le \frac{\sigma}{B}.
  \label{lemma:normal:residual:counting:v2:eq1}
\]

For the upper bound, since $t\ge B$ for all $t\in[B,\infty)$,
$
1\le \frac{t}{B}.
$
Therefore,
\begin{align}
\int_B^\infty
\exp\left(-\frac{t^2}{2\sigma}\right)\,dt
\le
\frac{1}{B}
\int_B^\infty
t\exp\left(-\frac{t^2}{2\sigma}\right)\,dt
=
\frac{\sigma}{B}
\exp\left(-\frac{B^2}{2\sigma}\right).
\end{align}
Multiplying by $\exp(B^2/(2\sigma))$ yields
$
I(B,\sigma)\le \frac{\sigma}{B}.
$

For the lower bound, integration by parts gives
\begin{align}
\int_B^\infty
\exp\left(-\frac{t^2}{2\sigma}\right)\,dt
&=
\frac{\sigma}{B}
\exp\left(-\frac{B^2}{2\sigma}\right)
-
\sigma
\int_B^\infty
\frac{1}{t^2}
\exp\left(-\frac{t^2}{2\sigma}\right)\,dt.
\end{align}
Since $t\ge B$,
$
\frac{1}{t^2}\le \frac{1}{B^2},
$
and hence
\begin{align}
\int_B^\infty
\exp\left(-\frac{t^2}{2\sigma}\right)\,dt
&\ge
\frac{\sigma}{B}
\exp\left(-\frac{B^2}{2\sigma}\right)
-
\frac{\sigma}{B^2}
\int_B^\infty
\exp\left(-\frac{t^2}{2\sigma}\right)\,dt.
\end{align}
Rearranging,
\[
\left(1+\frac{\sigma}{B^2}\right)
\int_B^\infty
\exp\left(-\frac{t^2}{2\sigma}\right)\,dt
\ge
\frac{\sigma}{B}
\exp\left(-\frac{B^2}{2\sigma}\right).
\]
Therefore,
\[
\int_B^\infty
\exp\left(-\frac{t^2}{2\sigma}\right)\,dt
\ge
\frac{\sigma B}{B^2+\sigma}
\exp\left(-\frac{B^2}{2\sigma}\right).
\]
Multiplying by $\exp(B^2/(2\sigma))$ gives
$
I(B,\sigma)
\ge
\frac{\sigma B}{B^2+\sigma}.
$

This finishes the proof of \eqref{lemma:normal:residual:counting:v2:eq1}. 

\subsection{Gaussian estimates \& proof of Lemma \ref{lemma:normal:residual}}\label{appendix:ss:gaussian:lemmas}

This section develops the Gaussian estimates underlying the smoothing arguments, culminating in the residual bounds. In this section, $Q>0$ is as in Section \ref{s:truncation}. Only within this section $B:=Q^{1/q}$.

For the next two lemmas, Lemmas \ref{lemma:normal:residual:aux} and \ref{lemma:normal:residual:aux:2}, we fix $\{m_\ell\}_{\ell=1}^q\subset\re$ and $\nu>0$. Given any $\ell\in\mathbb{N}$ and  $R>0$, we define the function $f_{\ell,R}:\re^\ell\rightarrow\re$, 
\begin{align}
  f_{\ell,R}(t_1,\ldots,t_\ell) := \frac{(\prod_{j=1}^\ell|t_j| - Q)_+}{(2\pi\nu)^{\ell/2}}\exp\left\{
      -\frac{\sum_{j=1}^\ell(t_j-m_j)^2}{2\nu}
      \right\}.
\end{align}
Our first goal is to give an upper bound on the integral $\int_{\re^q}f_{q,Q}$. We start with Lemma \ref{lemma:normal:residual:aux}, giving a bound on the integral on a restricted region. 

\begin{remark}
  We will sometimes use the shorthand notations $t_{1:\ell} = (t_1,\ldots,t_\ell)$ and, in multivariate integration, $dt_{1:\ell}=dt_1\cdots d t_\ell$.
\end{remark}

\begin{lemma}\label{lemma:normal:residual:aux}
    Fix any $a,b\in(0,1)$ such that 
    $$
      \max_{\ell\in[q]}m_\ell^2\le a^2B^2 \quad\mbox{ and }\quad 
      \nu\le b^2B^2.
    $$
    Define $\theta := b^2(\nicefrac{\sqrt{\nu}}{B})\sqrt{(\nicefrac{2}{\pi})}/(1-a)^2$ and $\calI :=(-\infty,-B]\cup[B,\infty)$. Then
    $$
      \int_{\calI^q}f_{q,Q}(t_{1:q})dt_{1:q}
      \le B^q\sum_{\ell=1}^q\binom{q}{\ell}
      (\theta e^{-\frac{(1-a)^2B^2}{2\nu}})^\ell.
    $$
\end{lemma}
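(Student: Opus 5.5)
On $\calI^q$ every $|t_j|\ge B$. Write $|t_j| = B + s_j$ with $s_j\ge 0$, and use $Q=B^q$. Then
\[
\prod_{j=1}^q |t_j| - Q=\prod_{j}(B+s_j)-B^q=\sum_{\emptyset\neq S\subseteq[q]} B^{q-|S|}\prod_{j\in S}s_j .
\]
This identity holds for nonnegative $s_j$, and the quantity is itself nonnegative, so the positive part plays no role. Substituting it into the integral, the Gaussian density factorizes over coordinates. Each term indexed by $S$ therefore becomes a product of one-dimensional integrals:
\[
\int_{\calI^q}f_{q,Q} = \sum_{\emptyset\ne S} B^{q-|S|}\prod_{j\in S} J_1(m_j)\prod_{j\notin S} J_0(m_j),
\]
where
\[
J_0(m):=\int_{|t|\ge B}\frac{e^{-(t-m)^2/2\nu}}{\sqrt{2\pi\nu}}\,dt,\qquad J_1(m):=\int_{|t|\ge B}(|t|-B)\frac{e^{-(t-m)^2/2\nu}}{\sqrt{2\pi\nu}}\,dt .
\]
Once each factor is bounded uniformly, grouping by $\ell=|S|$ produces the binomial sum. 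So the target is
\[
B\,J_0(m)\le B\,\theta e^{-(1-a)^2B^2/2\nu},\qquad J_1(m)\le B\,\theta e^{-(1-a)^2B^2/2\nu}.
\]
With these two bounds each term is at most $B^{q}(\theta e^{-(1-a)^2B^2/2\nu})^{q}$, which fits inside $B^q\binom{q}{\ell}(\cdot)^\ell$ provided $\theta e^{-\cdots}\le 1$. Since a bound of the form $(\cdot)^q$ for every term is weaker than $(\cdot)^\ell$, I will instead bound $J_0$ more crudely.

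For $J_0$ I use the tail of a normal variable. Since $|m|\le aB$, the set $\{|t|\ge B\}$ lies at distance at least $(1-a)B$ from $m$ on both sides, so Lemma~\ref{lemma:normal:residual:counting:v2} gives
\[
J_0\le \sqrt{2/\pi}\,\frac{\sqrt\nu}{(1-a)B}\,e^{-(1-a)^2B^2/2\nu}.
\]
For the crude bound I simply take $J_0\le 1$. This gives term $S$ a bound of $B^{q-\ell}\prod_{j\in S}J_1$, and summing over the $\binom q\ell$ sets of size $\ell$ matches the claimed form exactly once $J_1\le B\theta e^{-(1-a)^2B^2/2\nu}$.

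For $J_1$ I treat the two half-lines separately. On $t\ge B$, substitute $u=t-B\ge0$. The Gaussian centered at $m$ evaluated at $B+u$ satisfies $(B+u-m)^2\ge ((1-a)B+u)^2\ge (1-a)^2B^2+u^2$. Hence
\[
\int_0^\infty u\,\frac{e^{-(B+u-m)^2/2\nu}}{\sqrt{2\pi\nu}}\,du\le \frac{e^{-(1-a)^2B^2/2\nu}}{\sqrt{2\pi\nu}}\int_0^\infty u e^{-u^2/2\nu}\,du=\sqrt{\frac{\nu}{2\pi}}\,e^{-(1-a)^2B^2/2\nu}.
\]
The side $t\le -B$ is symmetric, so $J_1\le \sqrt{2/\pi}\,\sqrt\nu\, e^{-(1-a)^2B^2/2\nu}$. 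The remaining step is to check that this is at most $B\theta e^{-(1-a)^2B^2/2\nu}$, i.e.\ that $\sqrt{\nu}\le B\,b^2(\sqrt\nu/B)/(1-a)^2$, which amounts to $(1-a)^2\le b^2$. This need not hold, and I expect matching the exact constant in $\theta$ to be the main obstacle.

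To fix the constant, I will keep the cross term rather than discard it: $(B+u-m)^2\ge (1-a)^2B^2+2(1-a)Bu$. This gives
\[
\int_0^\infty u\,e^{-(1-a)Bu/\nu}\,du=\frac{\nu^2}{(1-a)^2B^2},
\]
and so
\[
J_1\le \frac{2}{\sqrt{2\pi\nu}}\cdot\frac{\nu^2}{(1-a)^2B^2}\,e^{-(1-a)^2B^2/2\nu}=\sqrt{\tfrac{2}{\pi}}\,\frac{\nu^{3/2}}{(1-a)^2B^2}\,e^{-(1-a)^2B^2/2\nu}.
\]
Now use $\nu\le b^2B^2$, i.e.\ $\nu^{3/2}\le b^2B^2\sqrt\nu$, to get
\[
J_1\le B\cdot b^2\,\frac{\sqrt\nu}{B}\,\sqrt{\tfrac{2}{\pi}}\,\frac{1}{(1-a)^2}\,e^{-(1-a)^2B^2/2\nu}=B\,\theta\,e^{-(1-a)^2B^2/2\nu}.
\]
This is exactly the definition of $\theta$ in the statement, which confirms that the cross-term route is the intended one. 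Summing over $S$ with $J_0\le1$ then completes the bound.
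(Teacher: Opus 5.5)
Your final argument is correct and is essentially the paper's proof: you shift $|t_j|=B+s_j$, expand over nonempty subsets, bound the Gaussian factors outside the subset by $1$, and keep the cross term $2(1-a)Bu$. This gives $\int_0^\infty u\,e^{-(1-a)Bu/\nu}\,du=\nu^2/((1-a)^2B^2)$, and together with $\nu^{3/2}\le b^2B^2\sqrt{\nu}$ it reproduces exactly the paper's bound $L\le B\,\theta\, e^{-(1-a)^2B^2/(2\nu)}$ and the sum $(L+B)^q-B^q$ (the abandoned detours via the $u^2$ bound and the tail estimate for $J_0$ can simply be deleted).
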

\begin{proof}
  By the change of variables $t_\ell\mapsto|t_\ell|$ and symmetry,
    \begin{align}
        \int_{\calI^q}f_{q,Q}(t_{1:q})dt_{1:q}
        &= \int_{[B,\infty)^q}
        \frac{\prod_{\ell=1}^qt_\ell - Q}{(2\pi\nu)^{q/2}}
        \prod_{\ell=1}^q\left(
          e^{-\frac{(t_\ell + m_\ell)^2}{2\nu}} + e^{-\frac{(t_\ell - m_\ell)^2}{2\nu}}
        \right)dt_{1:q}.
    \end{align}

  We perform the change of variables $t_\ell\rightarrow u_\ell+B$ over the regions $t_\ell\ge B$ and $u_\ell\ge0$ for all $\ell\in[q]$. Note that by the multivariate version of the binomial theorem, 
  \begin{align}
    \prod_{\ell=1}^qt_\ell - Q
    = \prod_{\ell=1}^q(u_\ell+B) - B^q
    = \sum_{\ell=1}^q B^{\,q-\ell}
    \!\!\sum_{1\le i_1<\cdots<i_\ell\le q}\!
    u_{i_1}\cdots u_{i_\ell}.
  \end{align}
  
  By Fubini's theorem, 
  \begin{align}
    &\int_{\re^q_+}
        \frac{
          u_{i_1}\cdots u_{i_r}
        }{(2\pi\nu)^{q/2}}
        \prod_{\ell=1}^q\left(
          e^{-\frac{(u_\ell + B + m_\ell)^2}{2\nu}} + e^{-\frac{(u_\ell + B - m_\ell)^2}{2\nu}}
        \right)du_{1:q}\\
    &= I\int_{\re^r_+}
        \frac{
          u_{i_1}\cdots u_{i_r}
        }{(2\pi\nu)^{r/2}}
        \prod_{j=1}^{r}\left(
          e^{-\frac{(u_{i_j} + B + m_{i_j})^2}{2\nu}} + e^{-\frac{(u_{i_j} + B - m_{i_j})^2}{2\nu}}
        \right)du_{i_1}\cdots du_{i_r}, 
  \end{align}
  with
  \begin{align}
    I &:= \prod_{\ell\in[q]\setminus\{i_1,\ldots,i_r\}}\int_{[0,\infty)}
        \frac{1}{(2\pi\nu)^{1/2}}
        \left(
          e^{-\frac{(u_\ell + B + m_\ell)^2}{2\nu}} + e^{-\frac{(u_\ell + B - m_\ell)^2}{2\nu}}
        \right)du_{\ell}\\
      &\le \prod_{\ell\in[q]\setminus\{i_1,\ldots,i_r\}}\frac{1}{2}\int_{\re}
        \frac{1}{(2\pi\nu)^{1/2}}
        \left(
          e^{-\frac{(u_\ell + B + m_\ell)^2}{2\nu}} + e^{-\frac{(u_\ell + B - m_\ell)^2}{2\nu}}
        \right)du_{\ell}
      \le 1. 
  \end{align}

  Gathering the previous facts,  
  \begin{align}
        &\int_{\calI^q}f_{q,Q}(t_{1:q})dt_{1:q}\\
        &\le \sum_{r=1}^q B^{\,q-r}
        \!\!\sum_{1\le i_1<\cdots<i_r\le q}\!
        \int_{\re^r_+}
        \frac{
          u_{i_1}\cdots u_{i_r}
        }{(2\pi\nu)^{r/2}}
        \prod_{j=1}^{r}\left(
          e^{-\frac{(u_{i_j} + B + m_{i_j})^2}{2\nu}} + e^{-\frac{(u_{i_j} + B - m_{i_j})^2}{2\nu}}
        \right)du_{i_1}\cdots du_{i_r}. 
  \end{align}

  Since $a B\ge |m_\ell|$ for any $\ell\in[q]$, we have that
  \begin{align}
    (t_\ell \pm m_\ell)^2 = u_\ell^2 + (B \pm m_\ell)^2
    + 2u_\ell(B \pm m_\ell)
    \ge (1-a)^2B^2 + 2(1-a)u_\ell B.
  \end{align}
  Consequently, $\int_{\calI^q}f_{q,Q}(t_{1:q})dt_{1:q}$ is not greater than
  \begin{align}
    &\sum_{r=1}^qB^{\,q-r}
    \!\!\sum_{1\le i_1<\cdots<i_r\le q}\!
    \frac{
      2^{r} e^{-r\frac{(1-a)^2B^2}{2\nu}}
    }{(2\pi\nu)^{r/2}}  
    \int_{\re_+^{r}}
    \left(\prod_{j=1}^{r} u_{i_j}e^{-\frac{(1-a)u_{i_j} B}{\nu}}\right)
    du_{i_1:i_r}\\
    &= \sum_{r=1}^qB^{\,q-r}
    \!\!\sum_{1\le i_1<\cdots<i_r\le q}\!
    \left(
      \frac{
        2e^{-\frac{(1-a)^2B^2}{2\nu}}
      }{\sqrt{2\pi\nu}}\int_{0}^\infty ue^{-\frac{(1-a)u B}{\nu}}du
    \right)^{r}. 
  \end{align}

  For any $\eta>0$, 
  $
    \int_{0}^\infty ue^{-\frac{(1-a)u B}{\nu}}du
    = \frac{\nu^2}{(1-a)^2B^2}.
  $
  Define the constant
  \begin{align}
    L := \frac{
        2e^{-\frac{(1-a)^2B^2}{2\nu}}
      }{\sqrt{2\pi\nu}}\cdot\frac{\nu^2}{(1-a)^2B^2}
      \le \sqrt{\frac{2}{\pi}}\exp\left(
      -\frac{(1-a)^2B^2}{2\nu}
    \right)\frac{b^2(\nicefrac{\sqrt{\nu}}{B})}{(1-a)^2}B,
  \end{align}
  where we used that 
  $
  b^2 B^2\ge \nu
      \Rightarrow
      \frac{\nu^{\frac{3}{2}}}{B^2}
      \le b^2B\cdot\frac{\sqrt{\nu}}{B}.
  $
  
  By the multivariate binomial theorem, 
  \begin{align}
    \int_{\calI^q}f_{q,Q}(t_{1:q})dt_{1:q}
    \le 
    \sum_{r=1}^qB^{\,q-r}
    \!\!\sum_{1\le i_1<\cdots<i_r\le q}\!
    L^{r}
    = (L+B)^q - B^q
    = \sum_{r=1}^q\binom{q}{r}L^r B^{q-r}.
  \end{align}
  Using the bound on $L$ above completes the proof. 
\end{proof}

Using Lemma \ref{lemma:normal:residual:aux}, we now prove Lemma \ref{lemma:normal:residual:aux:2}, stating a bound on $\int_{\re^q}f_{q,Q}$.
\begin{lemma}\label{lemma:normal:residual:aux:2}
  Suppose assumptions of Lemma \ref{lemma:normal:residual:aux} hold and $b^3\le(1-a)^2$. Then 
  $$
    \int_{\re^q}f_{q,Q} \le 4^q Q \theta e^{-\frac{(1-a)^2B^2}{2\nu}}.
  $$
\end{lemma}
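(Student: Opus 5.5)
We want to bound $\int_{\re^q}f_{q,Q}$. The plan is to split $\re^q$ according to which coordinates satisfy $|t_\ell|\ge B$. The key pointwise observation is that if $|t_\ell|<B$ for some coordinates, the positive part $(\prod|t_j|-Q)_+$ is dominated by the same expression with those coordinates replaced by $B$. Concretely, for a subset $S\subseteq[q]$ with $|t_\ell|<B$ for $\ell\notin S$ and $|t_\ell|\ge B$ for $\ell\in S$, we have $\prod_{j}|t_j|\le B^{q-|S|}\prod_{j\in S}|t_j|$, hence
\[
\Big(\prod_{j=1}^q|t_j|-Q\Big)_+\le B^{q-|S|}\Big(\prod_{j\in S}|t_j|-B^{|S|}\Big)_+ .
\]
The right-hand side vanishes when $S=\emptyset$, so only nonempty $S$ contribute. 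Integrating out the coordinates outside $S$ against their Gaussian densities costs at most $1$ each. This reduces each piece to a lower-dimensional integral of the form in Lemma \ref{lemma:normal:residual:aux}, with $q$ replaced by $s:=|S|$, $Q$ replaced by $B^s$, and the means $(m_\ell)_{\ell\in S}$. The hypotheses $\max m_\ell^2\le a^2B^2$ and $\nu\le b^2B^2$ are inherited by the subfamily. Lemma \ref{lemma:normal:residual:aux} applies verbatim in dimension $s$, since its proof only used $Q=B^q$ with $q$ the dimension.

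This gives
\[
\int_{\re^q}f_{q,Q}\le \sum_{s=1}^q\binom{q}{s}B^{q-s}\,B^s\sum_{\ell=1}^s\binom{s}{\ell}x^\ell,
\qquad x:=\theta e^{-\frac{(1-a)^2B^2}{2\nu}} .
\]
The next step is to show $x\le1$, so that $x^\ell\le x$ for $\ell\ge1$. This is where $b^3\le(1-a)^2$ enters. Since $\sqrt\nu/B\le b$, we get $\theta\le \sqrt{2/\pi}\,b^3/(1-a)^2\le\sqrt{2/\pi}<1$, and the exponential factor is at most $1$. Then $\sum_{\ell=1}^s\binom{s}{\ell}x^\ell\le (2^s-1)x$. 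Summing over $s$,
\[
\int_{\re^q}f_{q,Q}\le B^q x\sum_{s=1}^q\binom{q}{s}(2^s-1)= Q x\,(3^q-2^q)\le 4^qQ\,x ,
\]
which is the claim.

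The main obstacle is making the reduction rigorous. The coordinates outside $S$ integrate over $(-B,B)$ against their density, giving mass at most $1$. For the coordinates in $S$, the resulting integral over $\{|t_\ell|\ge B\}^S$ is exactly $\int_{\calI^s}f_{s,B^s}$, to which Lemma \ref{lemma:normal:residual:aux} applies. The normalization $(2\pi\nu)^{-\ell/2}$ splits multiplicatively, so this is just Fubini.
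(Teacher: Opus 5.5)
Your proposal is correct and follows essentially the same route as the paper. You partition $\re^q$ by the set $S$ of coordinates with $|t_\ell|\ge B$, dominate $(\prod_j|t_j|-Q)_+$ by $B^{q-s}(\prod_{j\in S}|t_j|-B^s)_+$, integrate out the remaining Gaussian coordinates at cost at most $1$, apply Lemma~\ref{lemma:normal:residual:aux} in dimension $s$, and use $\sqrt{\nu}/B\le b$ with $b^3\le(1-a)^2$ to get $\theta e^{-(1-a)^2B^2/(2\nu)}\le 1$; the only difference is that your exact count $\sum_{s}\binom{q}{s}(2^s-1)=3^q-2^q$ is slightly sharper than the paper's crude $2^q\cdot 2^q=4^q$.
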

\begin{proof}
  For every $\ell\in[q]$ and $\{i_1\ldots,i_\ell\}\subset[q]$, we define the set
\begin{align}
  \calR_{\{i_1\ldots,i_\ell\}} := \left\{
    t_{1:q}\in\re^q : 
    |t_i|>B, \forall i\in\{i_1,\ldots,i_\ell\},
    |t_i|\le B, \forall i\notin\{i_1,\ldots,i_\ell\}
  \right\}.
\end{align}
This family of subsets of $\re^q$ forms a partition. Hence
$$
  \int_{\re^q}f_{q,Q} = 
  \sum_{\ell\in[q],\{i_1\ldots,i_\ell\}\subset[q]}
  \int_{\calR_{\{i_1\ldots,i_\ell\}}}f_{q,Q}.
$$
In the region $\calR_{\{i_1\ldots,i_\ell\}}$, 
$
\prod_{\ell=1}^{q}|t_\ell| - Q
\le B^{q-\ell}(
  \prod_{j=1}^{\ell}|t_{i_j}| - B^{\ell}
).
$
It follows that 
$
  \int_{\calR_{\{i_1\ldots,i_\ell\}}} f_{q,Q}
  \le B^{q-\ell}\int_{\calI^\ell} f_{\ell,B^\ell}.
$
By Lemma \ref{lemma:normal:residual:aux}, 
\begin{align}
  \int_{\re^q}f_{q,Q} &\le 
  \sum_{\ell\in[q],\{i_1\ldots,i_\ell\}\subset[q]}
  B^{q-\ell}\int_{\calI^\ell} f_{\ell,B^\ell}\\
  &\le \sum_{\ell\in[q],\{i_1\ldots,i_\ell\}\subset[q]}
      B^{q-\ell}B^\ell\sum_{i=1}^\ell\binom{\ell}{i}
      \left(\theta e^{-\frac{(1-a)^2B^2}{2\nu}}\right)^i\\
  &\le \theta e^{-\frac{(1-a)^2B^2}{2\nu}}
  B^q\sum_{i=1}^q\binom{q}{i}\sum_{\ell=0}^q\binom{q}{\ell}\\
  &\le 4^qB^q \theta e^{-\frac{(1-a)^2B^2}{2\nu}}.
\end{align}
In the last inequality, we first used that $\theta e^{-\frac{(1-a)^2B^2}{2\nu}}\le \theta \le1$ since $b\le\sqrt{\nu}/B$ and $b^3\le(1-a)^2$; then we used that $\sum_{\ell=0}^q\binom{q}{\ell}=2^q$. 
\end{proof}

Using Lemma \ref{lemma:normal:residual:aux:2}, we obtain the proof of Lemma \ref{lemma:normal:residual} in the main text. We state a version of this lemma with general constants. Lemma \ref{lemma:normal:residual} follows from it with $a=b=\frac{1}{2}$.

\begin{lemma}\label{lemma:normal:residual:general}
Let
    $
    \bfU:=\{\bu_{j,\ell}\}_{j\in[m],\ell\in[q]}
    \in(\re^d)^{m\times q},
    $
    $\bz\in\re^d$, $\gamma>0$ and $A\ge1$. Let
    $
    \nu:=\gamma^2\Vert\bz\Vert_2^2
    $
    and
    $
    m_{j,\ell}:=\langle\bz,\bu_{j,\ell}\rangle
    $
    for $j\in[m]$ and $\ell\in[q]$. Fix any $a,b\in(0,1)$ such that $b^3\le(1-a)^2$ and 
    \begin{align}
      \max_{j\in[m],\ell\in[q]}
      m_{j,\ell}^2
      \le a^2 \left( \frac{Q}{m} \right)^{2/q}
      \quad\mbox{ and }\quad
      \nu \le b^2 \left(\frac{Q}{m}\right)^{2/q}. 
    \end{align}
    Define
    $
    \theta' := \frac{b^2\sqrt{(\nicefrac{2}{\pi})}}{(1-a)^2}
    \cdot
    \frac{\sqrt{\nu}}
    {\left(
      \frac{Q}{m}
    \right)^{\nicefrac{1}{q}}}.
    $

Then
\begin{align}
\Bigg|
\sum_{j=1}^{m}
\prod_{\ell=1}^{q}
\langle\bz,\bu_{j,\ell}\rangle
-
\Gamma_{\bfU,\gamma}
\Bigg[
\psi_{AQ}
\Bigg(
\sum_{j=1}^{m}
\prod_{\ell=1}^{q}
\langle\bz,\btheta_{j,\ell}\rangle
\Bigg)
\Bigg]
\Bigg|
\le \frac{4^qQ}{2}
\theta'\exp\left(
-\frac{(1-a)^2}{2}
\frac{
\left(\frac{Q}{m}\right)^{2/q}
}{\nu}
\right).
\end{align}
\end{lemma}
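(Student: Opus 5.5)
Set $S(\bfTheta):=\sum_{j=1}^m\prod_{\ell=1}^q\langle\bz,\btheta_{j,\ell}\rangle$ and $s:=\sum_{j}\prod_\ell m_{j,\ell}$. Under $\Gamma_{\bfU,\gamma}$ the scalars $t_{j,\ell}:=\langle\bz,\btheta_{j,\ell}\rangle$ are independent, and $t_{j,\ell}\sim\calN(m_{j,\ell},\nu)$. Independence gives $\Gamma_{\bfU,\gamma}S=\sum_j\prod_\ell m_{j,\ell}=s$. Hence the quantity to bound equals $|\Gamma_{\bfU,\gamma}[S-\psi_{AQ}(S)]|\le\Gamma_{\bfU,\gamma}(|S|-AQ)_+$. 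The plan is to reduce this to the one-block integral $\int_{\re^q}f_{q,\cdot}$ controlled by Lemma \ref{lemma:normal:residual:aux:2}, applied with threshold $Q/m$ and $B=(Q/m)^{1/q}$.

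\textbf{Step 1 (splitting the sum over $j$).} Since $A\ge1$ and $x\mapsto(x-c)_+$ is convex, we have $(|\sum_j a_j|-AQ)_+\le(\sum_j|a_j|-Q)_+\le\sum_j(|a_j|-Q/m)_+$. The last inequality uses $(\sum_j x_j-\sum_j c_j)_+\le\sum_j(x_j-c_j)_+$. Therefore
\[
\Gamma_{\bfU,\gamma}(|S|-AQ)_+\le\sum_{j=1}^m\Gamma_{\bfU,\gamma}\Big(\prod_{\ell=1}^q|t_{j,\ell}|-\tfrac{Q}{m}\Big)_+ .
\]
Each summand is exactly $\int_{\re^q}f_{q,Q/m}$ with means $m_\ell=m_{j,\ell}$, variance $\nu$, and $B:=(Q/m)^{1/q}$. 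The hypotheses $m_{j,\ell}^2\le a^2B^2$, $\nu\le b^2B^2$ and $b^3\le(1-a)^2$ are exactly those of Lemmas \ref{lemma:normal:residual:aux} and \ref{lemma:normal:residual:aux:2}. Note that $\theta$ there equals $\theta'$ here.

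\textbf{Step 2 (apply Lemma \ref{lemma:normal:residual:aux:2}).} Each summand is at most $4^q(Q/m)\theta'e^{-(1-a)^2B^2/(2\nu)}$. Summing over the $m$ blocks gives $4^qQ\theta'\exp(-\frac{(1-a)^2}{2}(Q/m)^{2/q}/\nu)$. This is a factor $2$ larger than the claimed $\frac{4^qQ}{2}\theta'\exp(\cdot)$.

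\textbf{Step 3 (recovering the factor $1/2$).} This is the expected obstacle. I would try two routes in order. First, check whether the proof of Lemma \ref{lemma:normal:residual:aux:2} has slack: its final bound used $\sum_{i=1}^q\binom{q}{i}\sum_\ell\binom{q}{\ell}\le 4^q$, whereas the true count is $2^q(2^q-1)<4^q$. Tracking the $\binom{q}{i}$ sum more carefully should give a constant of at most $4^q/2$ once one uses $\theta'\le 1/2$ (which follows from $b<1$ with $b^3\le(1-a)^2$ and $\sqrt{2/\pi}<1$) to bound the higher powers geometrically. Second, one can exploit symmetry. The deviation $S-\psi_{AQ}(S)$ has sign, and $|\Gamma(S-\psi(S))|\le\max\{\Gamma(S-AQ)_+,\Gamma(-S-AQ)_+\}$. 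For each block, the two sign patterns of $\prod_\ell t_{j,\ell}$ split the absolute integral roughly evenly, which saves a factor $2$. Since the paper's Lemma \ref{lemma:normal:residual} only needs constants up to absolute factors in its downstream use, I would record the sharper constant when it comes out. Otherwise I would take whichever of these refinements yields exactly $\frac12$.
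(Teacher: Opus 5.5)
Your Steps 1 and 2 are exactly the paper's argument. You use the independent $\calN(m_{j,\ell},\nu)$ representation and the bound $|t-\psi_{AQ}(t)|\le(|t|-Q)_+$. You split into $m$ blocks at threshold $Q/m$, and apply Lemma \ref{lemma:normal:residual:aux:2} with $B=(Q/m)^{1/q}$, where indeed $\theta=\theta'$. You are also right that this only yields $4^qQ\theta'\exp(\cdot)$. The paper's proof stops at the same point and simply asserts the claim, so it leaves the same factor $2$ unaccounted for. However, your Step 3 does not close the gap:
\begin{itemize}
\item The claim $\theta'\le 1/2$ is false. The hypotheses only give $\theta'\le\sqrt{2/\pi}\,b^3/(1-a)^2\le\sqrt{2/\pi}\approx0.80$, with equality when $(1-a)^2=b^3$ and $\nu=b^2(Q/m)^{2/q}$.
\item The count $2^q(2^q-1)=4^q-2^q$ is at least $4^q/2$ for every $q\ge1$, so it cannot produce the factor by itself.
\item The symmetry route is not an argument. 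With nonzero means of one sign, the positive-product region carries essentially all of $\big(\prod_\ell|t_{j,\ell}|-Q/m\big)_+$, so no even split between sign patterns occurs.
\end{itemize}

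The repair is to redo the count inside Lemma \ref{lemma:normal:residual:aux:2} exactly; only $\theta'<1$ is needed. Use the notation of that lemma, $B=Q^{1/q}$ and $x:=\theta e^{-(1-a)^2B^2/(2\nu)}$. Lemma \ref{lemma:normal:residual:aux} bounds each region $\calR_{\{i_1,\ldots,i_\ell\}}$ by $B^{q-\ell}\cdot B^\ell\big((1+x)^\ell-1\big)$. Summing over the partition gives
\[
\int_{\re^q}f_{q,Q}\le B^q\sum_{\ell=1}^q\binom{q}{\ell}\big((1+x)^\ell-1\big)=B^q\big((2+x)^q-2^q\big)\le(3^q-2^q)\,B^q x .
\]
The last step holds because $x\le\theta\le\sqrt{2/\pi}<1$ and $x\mapsto(2+x)^q-2^q$ is convex and vanishes at $0$. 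Moreover $3^q-2^q\le 4^q/2$ for all $q\ge1$: check $q=1,2$ directly, and for $q\ge3$ use $(3/4)^q\le 27/64<1/2$. Apply this with $Q$ replaced by $Q/m$. Each block is then at most $\frac{4^q}{2}\cdot\frac{Q}{m}\,\theta'\exp\big(-\frac{(1-a)^2}{2}\frac{(Q/m)^{2/q}}{\nu}\big)$, and summing over the $m$ blocks in your Step 2 gives exactly the stated constant.
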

\begin{proof}
  Let $\nu:=\gamma^2\Vert\bz\Vert_2^2$ and, for each
  $j\in[m]$, $\ell\in[q]$, let
  $m_{j,\ell}:=\langle\bz,\bu_{j,\ell}\rangle$
  and
  $N_{j,\ell}\sim\calN(m_{j,\ell},\nu)$,
  where 
  $\{N_{j,\ell}\}_{j\in[m],\ell\in[q]}$
  is a family of independent normal random variables. Hence
  \begin{align}
    \Gamma_{\bfU,\gamma}\left[
      \psi_{AQ}\left(
      \sum_{j=1}^{m}
      \prod_{\ell=1}^{q}
      \langle\bz,\btheta_{j,\ell}\rangle
      \right)
    \right]
    = \esp\left[
      \psi_{AQ}\left(
      \sum_{j=1}^{m}
      \prod_{\ell=1}^{q}
      N_{j,\ell}
      \right)
    \right],
  \end{align}
  and
  $
    \esp\left[
      \sum_{j=1}^{m}
      \prod_{\ell=1}^{q}
      N_{j,\ell}
    \right]
    =
    \sum_{j=1}^{m}
    \prod_{\ell=1}^{q}
    \langle\bz,\bu_{j,\ell}\rangle.
  $

  For any $t\in\re$,
  $
    t-\psi_{AQ}(t)
    =
    (t-AQ)_+
    -
    (t+AQ)_-,
  $
  and, in particular,
  $
    |t-\psi_{AQ}(t)|
    \le
    (|t|-AQ)_+
    \le
    (|t|-Q)_+,
  $
  where we used that $A\ge1$.
  Therefore,
  \begin{align}
    &
    \left|
    \esp\left[
      \sum_{j=1}^{m}
      \prod_{\ell=1}^{q}
      N_{j,\ell}
      -
      \psi_{AQ}\left(
      \sum_{j=1}^{m}
      \prod_{\ell=1}^{q}
      N_{j,\ell}
      \right)
    \right]
    \right|
    \\
    &\le
    \int_{\re^{mq}}
    \frac{
      \left(
      \sum_{j=1}^{m}
      \prod_{\ell=1}^{q}
      |t_{j,\ell}|
      -
      Q
      \right)_+
    }{
      (2\pi\nu)^{\frac{mq}{2}}
    }
    \exp\left\{
      -\frac{
      \sum_{j=1}^{m}
      \sum_{\ell=1}^{q}
      (t_{j,\ell}-m_{j,\ell})^2
      }{
      2\nu
      }
    \right\}
    dt_{1:m,1:q} =:J. \label{lemma:normal:residual:general:eq1}
  \end{align}

  Using that $(t-Q)_+\le(t-Q/m)_+$ and 
  $(\sum_{j=1}^{m}A_j)_+
  \le
  \sum_{j=1}^{m}(A_j)_+$
  for any sequence
  $\{A_j\}_{j=1}^{m}\subset\re$,
  we obtain:
  \begin{align}
    J  &\le
    \sum_{j=1}^{m}
    \int_{\re^{mq}}
    \frac{
      \left(
      \prod_{\ell=1}^{q}
      |t_{j,\ell}|
      -
      \frac{Q}{m}
      \right)_+
    }{
      (2\pi\nu)^{\frac{mq}{2}}
    }
    \exp\left\{
      -\frac{
      \sum_{r=1}^{m}
      \sum_{\ell=1}^{q}
      (t_{r,\ell}-m_{r,\ell})^2
      }{
      2\nu
      }
    \right\}
    dt_{1:m,1:q}
    \\
    &\le
    \sum_{j=1}^{m}
    \int_{\re^{q}}
    \frac{
      \left(
      \prod_{\ell=1}^{q}
      |t_{j,\ell}|
      -
      \frac{Q}{m}
      \right)_+
    }{
      (2\pi\nu)^{\frac{q}{2}}
    }
    \exp\left\{
      -\frac{
      \sum_{\ell=1}^{q}
      (t_{j,\ell}-m_{j,\ell})^2
      }{
      2\nu
      }
    \right\}
    dt_{j,1:q}.
    \label{lemma:normal:residual:general:eq2}
  \end{align}

  Recall the definition of $\theta'$. For every
  $r\in[m]$,
  $
    \max_{\ell\in[q]} m_{r,\ell}^2 \le a^2\left(\nicefrac{Q}{m}\right)^{\frac{2}{q}},
  $
  $
    \nu \le b^2\left(\nicefrac{Q}{m}\right)^{\frac{2}{q}}.
  $
  We apply Lemma \ref{lemma:normal:residual:aux:2}
  to each integral in
  \eqref{lemma:normal:residual:general:eq2}
  with these parameters.
  Substituting these bounds in
  \eqref{lemma:normal:residual:general:eq2}
  and
  \eqref{lemma:normal:residual:general:eq1},
  we prove the claim.
\end{proof}

\subsection{Proof of Corollary \ref{cor:count:linear:v3}}\label{ss:suplement:cor:count:linear:v3}

  We invoke Lemma \ref{lemma:count:linear:v3}
  with $c_1\leftarrow c_1/(mq)$,
  $A=4^q$
  and
  $\gamma^{-2}:=\erank(\bfSigma)$. 
    
  Condition
  \eqref{cor:count:linear:condition:B:c:v3}
  implies condition
  \eqref{lemma:count:linear:condition:B:c:v3}
  of Lemma \ref{lemma:count:linear:v3}
  with such parameters.

  Finally, $\delta\in(0,5/7)$ implies that $56 + 8\log(1/\delta)\le 175\log(1/\delta)$. Using also that $k\ge (c_2^{-1}\log(1/\delta)) \vee (c_3^{-1}\erank(\bfSigma))$, the claim follows from Lemma \ref{lemma:count:linear:v3}.

\subsection{Proof of Corollary \ref{cor:truncated:process}}
\label{ss:supplement:cor:truncated:process}

  We will apply Lemma \ref{lemma:truncated:process} with the two distinct cases:
  \begin{align}
    \begin{cases}
      p \ge 2q, &\mbox{ taking } r:=2q,\\
      q\le p < 2q, &\mbox{ taking } r:=p.
    \end{cases}
  \end{align}
  In both cases we apply Lemma \ref{lemma:truncated:process} with $c_1\leftarrow c_1/(mq)$, $A=4^q$ and $\gamma^{-2}:=\erank(\bfSigma)$.

  Condition
  \eqref{cor:count:linear:condition:B:c:v3}
  implies
  \eqref{lemma:count:linear:condition:B:c:v3}
  of Lemma \ref{lemma:truncated:process}
  with such parameters. Since
  $\delta\in(0,5/7)$, 
  $36+4\log 2+8\log(2/\delta)\le 140\log(1/\delta)$, 
  $2+2\log 2\le 11\log(1/\delta)$
  and $2\log(2/\delta)\le 7\log(1/\delta)$. From this fact, $k\ge (c_2^{-1}\log(1/\delta)) \vee (c_3^{-1}\erank(\bfSigma))$, 
  $\nu_r\le\|\bfSigma\|^{1/2}\kappa_r$, $\nu_r(\gamma)\le2\|\bfSigma\|^{1/2}\kappa_r$, $q\le r\le2q$ and Lemma \ref{lemma:truncated:process} we obtain that, with probability at least $1-\delta$,
  \begin{align}
  \epsilon_{(4^q Q)} &\le 3m^{\frac{r}{2q}}(4^qQ)^{1-\frac{r}{2q}}
    (4\|\bfSigma\|^{1/2}\kappa_r)^{\frac r2}
    \sqrt{\frac{\erank(\bfSigma)+11\log(1/\delta)}{n}}\\ 
    &+ \left[
    2\left(
        20 + \frac{10+\sqrt{\frac{\pi}{2}}}{mq}
    \right)c_1 + \frac{140c_2 + 18c_3}{3}
  \right](4^qQ)\frac{k}{n}. \label{cor:truncated:process:eq1}
\end{align}

We now split our argument between the two cases. If $p\ge2q$, then $r=2q$ and
\begin{align}
  m^{\frac{r}{2q}}(4^qQ)^{1-\frac{r}{2q}} (4\|\bfSigma\|^{1/2}\kappa_r)^{\frac{r}{2}}
  = m(4\|\bfSigma\|^{1/2}\kappa_{2q})^q. \label{cor:truncated:process:eq2}
\end{align}
If $q\le p < 2q$, then $r=p$ and by Young's inequality,  
\begin{align}
  &m^{\frac{r}{2q}}(4^qQ)^{1-\frac{r}{2q}} 
  (4\|\bfSigma\|^{1/2}\kappa_r)^{\frac{r}{2}}
  \sqrt{\frac{\erank(\bfSigma) + 11\log(1/\delta)}{n}}\\
  &= \left(
    4^qQ \frac{\erank(\bfSigma) + 11\log(1/\delta)}{n}
  \right)^{1-\frac{r}{2q}}
  \left(
    m(4\|\bfSigma\|^{1/2}\kappa_r)^{q} \left(
      \frac{\erank(\bfSigma) + 11\log(1/\delta)}{n}
    \right)^{1-\frac{q}{r}}
  \right)^{\frac{r}{2q}}\\
  &\le \left(
    1 - \frac{r}{2q}
  \right) 4^qQ \frac{\erank(\bfSigma)+11\log(1/\delta)}{n}
  + \frac{r}{2q}\cdot m(4\|\bfSigma\|^{1/2}\kappa_r)^{q} \left(
      \frac{\erank(\bfSigma)+11\log(1/\delta)}{n}
    \right)^{1-\frac{q}{r}}\\
  &\le 4^qQ \frac{(11c_2+c_3)k}{n}
  + m(4\|\bfSigma\|^{1/2}\kappa_r)^{q} \left(
      \frac{(11c_2+c_3)k}{n}
    \right)^{1-\frac{q}{p}}. 
    \label{cor:truncated:process:eq3}
\end{align}

Using the bounds \eqref{cor:truncated:process:eq2}-\eqref{cor:truncated:process:eq3} in \eqref{cor:truncated:process:eq1} for each of the two cases, finishes the proof. 

\subsection{Proof of Lemma \ref{lemma:truncation:bias}}
\label{ss:supplement:lemma:truncation:bias}

For any $Z\in\re$ and $t>0$, 
$
  |Z-\psi_{t}(Z)|\le (|Z|-t)_+.
$
In addition, 
$(\sum_{j=1}^{m}Z_j)_+ \le \sum_{j=1}^{m}(Z_j)_+$ for any sequence 
$\{Z_j\}_{j=1}^{m}\subset\re$. Hence, 
\begin{align}
  \calT_t &= \sup_{\bfU\in\mbB_2^{m\times q}}\esp\left[
      \left(
      \sum_{j=1}^{m}
      \prod_{\ell=1}^{q}
      \langle\bx,\bu_{j,\ell}\rangle
      \right)
      -\psi_{t}\!\left(
      \sum_{j=1}^{m}
      \prod_{\ell=1}^{q}
      \langle\bx,\bu_{j,\ell}\rangle
      \right)\right]\\
  &\le \sup_{\bfU\in\mbB_2^{m\times q}}\esp\left[
      \left(
      \left|\sum_{j=1}^{m}
      \prod_{\ell=1}^{q}
      \langle\bx,\bu_{j,\ell}\rangle\right|
      - t
      \right)_+\right]\\
  &\le \sum_{j=1}^{m}\sup_{\bfU\in\mbB_2^{m\times q}}\esp\left[
      \left(
      \prod_{\ell=1}^{q}
      \left|\langle\bx,\bu_{j,\ell}\rangle\right|
      - \frac{t}{m}
      \right)_+\right]. 
\end{align}

Since $p\ge q$, for any $Z\in\re$ and $t>0$, $(|Z|-t)_+\le\frac{|Z|^{\frac{p}{q}}}{t^{\frac{p}{q}-1}}$. Hence, 
\begin{align}
  \calT_t &\le \sum_{j=1}^m\sup_{\bfU\in\mbB_2^{m\times q}}\esp\left[
    \frac{\prod_{\ell=1}^{q}
      |\langle\bx,\bu_{j,\ell}\rangle|^{\frac{p}{q}}}{(t/m)^{\frac{p}{q}-1}}
  \right]\\
  &\le \frac{1}{(t/m)^{\frac{p}{q}-1}}
  \sum_{j=1}^m\sup_{\bfU\in\mbB_2^{m\times q}}\prod_{\ell=1}^{q}
      \left(
      \esp\!\left[
      |\langle\bx,\bu_{j,\ell}\rangle|^{p}
      \right]
      \right)^{\frac{1}{q}}\\
  &= \frac{m^{\frac{p}{q}-1}}{t^{\frac{p}{q}-1}}
  \sum_{j=1}^m
  \sup_{\bfU\in\mbB_2^{m\times q}}\prod_{\ell=1}^{q}
      \left(
      \esp\!\left[
      |\langle\bx,\bu_{j,\ell}\rangle|^{p}
      \right]
      \right)^{\frac{1}{q}}
\end{align}
where in the last inequality we used the generalized Cauchy--Schwarz inequality. For every $\bu\in\mbB_2$,
    $
      \esp\!\left[
      |\langle\bx,\bu\rangle|^{p}
      \right]
      \le \nu_{p}^{p}.
    $
Gathering the previous bounds, we finish the proof of the first stated inequality. 

The second inequality stated in the lemma follows immediately from the first and the fact that 
$
    \frac{Q^{1/q}}
    { 2m^{\nicefrac{1}{q}}(mq)^{\nicefrac{1}{p}} }
    \ge \left(\frac{\nu_p^pn}{c_1k}\right)^{\nicefrac{1}{p}}
$
by condition \eqref{cor:count:linear:condition:B:c:v3}. 

\subsection{Proof of Lemma \ref{lemma:variance:auxiliary}}
\label{ss:supplement:lemma:variance:auxiliary}
For any $Z\in\re$, $\psi_{AQ}^2(Z)=(|Z|\wedge (AQ))^2$. Hence, 
\begin{align}
  \esp\left[
  \psi_{AQ}^2\left(\sum_{j=1}^m\prod_{\ell=1}^q X_{\ell,j}\right)
  \right] &= 
  \esp\left[
    \left(
      \left|\sum_{j=1}^m\prod_{\ell=1}^q X_{\ell,j}\right|\wedge (AQ)
    \right)^2
  \right]\\
  &= \esp\left[
    \left(
      \left|\sum_{j=1}^m\prod_{\ell=1}^q X_{\ell,j}\right|\wedge (AQ)
    \right)^{2-\frac{r}{q}}
    \left(
      \left|\sum_{j=1}^m\prod_{\ell=1}^q X_{\ell,j}\right|\wedge (AQ)
    \right)^{\frac{r}{q}}
  \right]\\
  &\le (AQ)^{2-\frac{r}{q}}
  \esp\left[
    \left(
      \left|\sum_{j=1}^m\prod_{\ell=1}^q X_{\ell,j}\right|
    \right)^{\frac{r}{q}} 
  \right]\\
  &\le m^{r/q-1} (AQ)^{2-\frac{r}{q}}
  \sum_{j=1}^m\esp\left[
      \left|\prod_{\ell=1}^q X_{\ell,j}\right|^{\frac{r}{q}}
  \right]\\
  &\le m^{r/q-1} (AQ)^{2-\frac{r}{q}}
  \sum_{j=1}^m\prod_{\ell=1}^q
      \left(\esp\left[
      \left|X_{\ell,j}\right|^{r}
  \right]\right)^{\frac{1}{q}}.
\end{align}
In the first inequality we used that $0\le r\le2q$. In the second inequality we used that the function $t\mapsto|t|^{\frac{r}{q}}$ is convex since $r\ge q$ and, in the third we used the generalized H\"older inequality.

\subsection{Proof of Proposition \ref{prop:minimax:dir:trimmed:mean}}
\label{ss:supplement:prop:minimax:dir:trimmed:mean}

  We omit the details regarding existence and measurability of $\widehat\bfT_k$; see e.g. the details in the proof of Proposition 3.1 in \cite{2024Oliveira:Rico}. For the inequality, an analogous argument in the mentioned proof entails 
  \begin{align}
    \|\widehat\bfT_k-\esp[\bx^{\otimes q}]\| &=
    \sup_{\bu\in\mbS_2}\left|
      \langle\widehat\bfT_k,\bu^{\otimes q}\rangle 
      - \esp[\langle\bx,\bu\rangle^q]
    \right|\\
    &\le \sup_{\bu\in\mbS_2}\left|
      \langle\widehat\bfT_k,\bu^{\otimes q}\rangle - \sfT_k(\tilde\bx_{1:n}|\bu)
    \right|
    + \sup_{\bu\in\mbS_2}\left|
      \esp[\langle\bx,\bu\rangle^q] - \sfT_k(\tilde\bx_{1:n}|\bu)
    \right|\\
    &\le 2\sup_{\bu\in\mbS_2}\left|
      \esp[\langle\bx,\bu\rangle^q] - \sfT_k(\tilde\bx_{1:n}|\bu)
    \right|.
  \end{align}
This finishes the proof. 

\subsection{Proof of Lemma \ref{lemma:trim:trunc}}
\label{ss:supplement:lemma:trim:trunc}

Let $k\in [\lfloor(n-1)/2\rfloor]$. In the following we assume the event $\Count_q(\bz_{1:n},Q,k)$ holds. On this event, in particular we have, for all $\bu\in\mbS_2$, 
\begin{align}
  \#\{i\in[n]:\langle\bz_i,\bu\rangle^q > Q\} \le k
  ~~~\mbox{ and }~~~
  \#\{i\in[n]:\langle\bz_i,\bu\rangle^q < -Q\} \le k. 
\end{align}
We refer to these as the \emph{counting conditions}. 

Fix $\bu\in\mbS_2$ and denote by $I_{\bu}\subset[n]$ the index set retained by the trimmed mean $\sfT_k(\bz_{1:n}\mid\bu)$. Thus, \(|I_{\bu}|=n-2k\) and 
$$
  \sfT_k(\bz_{1:n}\mid\bu) := \frac{1}{n-2k}
  \sum_{i=k+1}^{n-k}\langle\bz,\bu\rangle_{(i)}^{\,q}
  = \frac{1}{n-2k}
  \sum_{i\in I_{\bu}}\langle\bz_i,\bu\rangle^{\,q}.
$$

By the counting conditions and the fact that the \(k\) largest and \(k\) smallest observations are removed, it follows that 
$
  |\langle\bz_i,\bu\rangle^{\,q}| \le Q
$
for any $i\in I_{\bu}$. In particular, 
$
  \psi_Q(\langle\bz_i,\bu\rangle^{\,q}) = \langle\bz_i,\bu\rangle^{\,q}
$
for any $i\in I_{\bu}$. Hence, we can write:
\begin{align}
  n \sfT_Q(\bz_{1:n}\mid\bu) := \sum_{i=1}^n\psi_Q(\langle\bz_i,\bu\rangle^{q})
  = (n-2k)\sfT_k(\bz_{1:n}\mid\bu) 
  + \sum_{i\notin I_{\bu}}\psi_Q(\langle\bz_i,\bu\rangle^{q}). 
  \label{ss:supplement:lemma:trim:trunc:eq1}
\end{align}
Since $|\psi_Q(t)|\le Q$ for any $t\in\re$ and $|[n]\setminus I_{\bu}|=2k$, we can also write
\begin{align}
  \left|
    \sum_{i\notin I_{\bu}}\psi_Q(\langle\bz_i,\bu\rangle^{q})
  \right| \le 2kQ\mbox{ and }|2k\sfT_k(\bz_{1:n}\mid\bu)|\leq 2kQ.\label{ss:supplement:lemma:trim:trunc:eq2}
\end{align}

From \eqref{ss:supplement:lemma:trim:trunc:eq1} and \eqref{ss:supplement:lemma:trim:trunc:eq2} it follows that
\begin{align}
  \Big|
    \frac{n}{n-2k} \sfT_Q(\bz_{1:n}\mid\bu) - \sfT_k(\bz_{1:n}\mid\bu)
  \Big| \le \frac{2kQ}{n-2k}. 
\end{align}
Moreover, 
\begin{align}
\left|\sfT_k(\bz_{1:n}\mid\bu) -\sfT_Q(\bz_{1:n}\mid\bu)\right| 
& \leq \frac{1}{n}\left|(n-2k)\sfT_k(\bz_{1:n}\mid\bu) -n\sfT_Q(\bz_{1:n}\mid\bu)\right|  + \frac{|2k\sfT_k(\bz_{1:n}\mid\bu)|}{n}\\ 
& = \frac{2k}{n}\left|\sum_{i\notin I_{\bu}}\psi_Q(\langle\bz_i,\bu\rangle^{q})\right| + \frac{|2k\sfT_k(\bz_{1:n}\mid\bu)|}{n} 
\leq \frac{4kQ}{n}
\end{align}
This completes the proof. 

\subsection{Proof of Proposition \ref{prop:optimal:hypercontractive}}
\label{ss:supplement:prop:optimal:hypercontractive}

Set 
$
\alpha:=\frac{\epsilon}{1-\epsilon}
$ 
and define the distributions 
$
  P_0:=\delta_1
$ 
and 
$
P_1:=(1-\alpha)\delta_1+\alpha\delta_a,
$
where 
$
a := \left( \nicefrac{(\kappa^p-1+\alpha)}{\alpha} \right)^{1/p}.
$ 
Clearly, $P_0\in\calF_{p,\kappa}$. Moreover, 
$
\esp_{P_1}|X|^p = 1-\alpha+\alpha a^p = \kappa^p, 
$ 
whereas 
$
\esp_{P_1}|X|^2 = 1-\alpha+\alpha a^2 \ge1.
$ 
Consequently, 
$
\{\esp_{P_1}|X|^p\}^{1/p} = \kappa \le \kappa\{\esp_{P_1}|X|^2\}^{1/2}
$ 
and therefore $P_1\in\calF_{p,\kappa}$.

The corresponding parameters satisfy
\begin{align}
\esp_{P_1}|X|^q-\esp_{P_0}|X|^q
=
\alpha(a^q-1)
=
\alpha^{1-\frac{q}{p}}
(\kappa^p-1+\alpha)^{\frac{q}{p}}
-\alpha.
\end{align}
Since $\kappa^p-1+\alpha\ge\kappa^p-1$, we obtain
\begin{align}
\esp_{P_1}|X|^q-\esp_{P_0}|X|^q
\ge
\alpha^{1-\frac{q}{p}}
(\kappa^p-1)^{\frac{q}{p}}
-\alpha\
=
\alpha^{1-\frac{q}{p}}
(\kappa^p-1)^{\frac{q}{p}}
\left[
1-
\left(
\frac{\alpha}{\kappa^p-1}
\right)^{\frac{q}{p}}
\right].
\end{align}
Therefore, if
$
\alpha
\le
2^{-\frac{p}{q}}(\kappa^p-1),
$
then
$
\left(
\frac{\alpha}{\kappa^p-1}
\right)^{\frac{q}{p}}
\le \frac12,
$
and consequently, since $\alpha\ge\epsilon$, 
\begin{align}
\esp_{P_1}|X|^q-\esp_{P_0}|X|^q
\ge
\frac12
(\kappa^p-1)^{\frac{q}{p}}
\alpha^{1-\frac{q}{p}}
\ge \frac12
(\kappa^p-1)^{\frac{q}{p}}
\epsilon^{1-\frac{q}{p}}.
\end{align}

Now let
$
  Q_0:=\delta_a
$ 
and 
$
Q_1:=\delta_1. 
$
For the first contaminated model,
$
(1-\epsilon)P_0+\epsilon Q_0
=
(1-\epsilon)\delta_1+\epsilon\delta_a.
$
For the second one,
\begin{align}
(1-\epsilon)P_1+\epsilon Q_1
&=
(1-\epsilon)
\left[
(1-\alpha)\delta_1+\alpha\delta_a
\right]
+\epsilon\delta_1\\
&=
\left[
(1-\epsilon)(1-\alpha)+\epsilon
\right]\delta_1
+
(1-\epsilon)\alpha\,\delta_a.
\end{align}
Moreover, since $(1-\epsilon)\alpha = \epsilon$, 
$
(1-\epsilon)(1-\alpha)+\epsilon
= 1-\epsilon.
$
Hence
$
(1-\epsilon)P_1+\epsilon Q_1
=
(1-\epsilon)\delta_1+\epsilon\delta_a
=
(1-\epsilon)P_0+\epsilon Q_0.
$
Thus, the two contaminated distributions coincide exactly, even though
their target parameters differ by at least
$
\frac12
(\kappa^p-1)^{\frac{q}{p}}
\epsilon^{1-\frac{q}{p}}.
$
The standard two-point indistinguishability argument therefore yields
the claimed minimax lower bound. See Theorem 5.1 in \cite{2018Chen:Gao:Ren}. 

\subsection{Proof diagram}\label{ss:proof:diagram}

\begin{center}

\scriptsize

\begin{tikzpicture}[
    x=1cm,
    y=1cm,
    >=Stealth,
    box/.style={
        draw,
        rounded corners=2pt,
        align=center,
        minimum width=3.05cm,
        minimum height=.72cm,
        inner sep=2.2pt,
        fill=white,
        font=\scriptsize
    },
    widebox/.style={
        draw,
        rounded corners=2pt,
        align=center,
        minimum width=3.35cm,
        minimum height=.72cm,
        inner sep=2.2pt,
        fill=white,
        font=\scriptsize
    },
    group/.style={
        draw=gray!65,
        rounded corners=4pt,
        fill=gray!7,
        inner sep=5pt
    },
    arrow/.style={
        ->,
        dashed,
        gray!90,
        line width=.45pt
    }
]


\node[box] (tail) at (-5.45,0)
{Lemma \ref{lemma:normal:residual:counting:v2}\\
1-d Gaussian tail a.s.};

\node[box] (exp) at (-5.45,-1.45)
{Lemma \ref{lemma:exp:Q}\\
Smoothing residual\\
(expectation)};

\node[box] (proj) at (-5.45,-3.15)
{Proposition \ref{prop:counting:linear:v2}\\
Smoothed \\ 
counting probability\\
(expectation)};

\node[box] (count) at (-5.45,-4.9)
{Lemma \ref{lemma:count:linear:v2}\\
Counting lemma\\
(expectation)};

\node[box] (aux1) at (1.80,0)
{Lemma \ref{lemma:normal:residual:aux}\\
Gaussian tail on $\calI^q$};

\node[box] (aux2) at (1.80,-1.45)
{Lemma \ref{lemma:normal:residual:aux:2}\\
Gaussian tail on $\re^q$};

\node[box] (main1) at (-1.80,-9.95)
{Lemma \ref{lemma:count:linear:v3}\\
Counting process\\
(concentration)};

\node[box] (cor1) at (-1.80,-11.65)
{Corollary \ref{cor:count:linear:v3}\\
$\gamma^{-2}=\erank(\bfSigma)$};

\node[box] (res2) at (1.80,-3.25)
{Lemma \ref{lemma:normal:residual}\\
Truncation residual a.s.};

\node[box] (diff2) at (1.80,-5.0)
{Lemma \ref{lemma:smoothed:trunc:process:diff}\\
Truncation residual\\
(expectation)};

\node[box] (smooth2) at (1.80,-6.55)
{Lemma \ref{lemma:smoothed:trunc:process}\\
Smoothed truncated \\
process\\
(expectation)};

\node[box] (prop2) at (1.80,-8.25)
{Proposition \ref{prop:truncated:complexity}\\
Truncated process\\
(expectation)};

\node[box] (main2) at (1.80,-9.95)
{Lemma \ref{lemma:truncated:process}\\
Truncated process\\
(concentration)};

\node[box] (cor2) at (1.80,-11.65)
{Corollary \ref{cor:truncated:process}\\
$\gamma^{-2}=\erank(\bfSigma)$};

\node[widebox] (pac) at (-5.45,-6.55)
{Proposition \ref{prop:bernstein:smoothed}\\
PAC-Bayesian \\ 
Bernstein inequality\\
(expectation)};

\node[widebox] (uniform) at (-5.45,-9.95)
{Uniform Bernstein \\ 
inequality};


\begin{scope}[on background layer]

\node[group,
      fit=(aux1)(aux2),
      inner xsep=4pt,
      inner ysep=5pt,
      label={[font=\scriptsize]above:Gaussian residual lemmas}] {};

\node[group,
      fit=(tail)(exp)(proj)(count),
      inner xsep=5pt,
      inner ysep=5pt,
      label={[font=\scriptsize]above:Counting lemma \& smoothing residual}] {};

\node[group,
      fit=(main1)(cor1),
      label={[font=\scriptsize]above:Counting process}] {};

\node[group,
      fit=(diff2)(smooth2)(prop2)(main2)(cor2),
      label={[font=\scriptsize]above:Truncated process}] {};

\end{scope}


\draw[arrow] (aux1)--(aux2);

\draw[arrow] (aux2)--(res2);

\draw[arrow] (tail)--(proj);

\draw[arrow] (exp)--(proj);

\draw[arrow] (proj)--(count);

\draw[arrow] (count)--(main1);

\draw[arrow] (main1)--(cor1);

\draw[arrow] (res2)--(diff2);

\draw[arrow] (diff2)--(prop2);

\draw[arrow] (smooth2)--(prop2);

\draw[arrow] (count)--(prop2);

\draw[arrow] (exp)--(prop2);

\draw[arrow] (prop2)--(main2);

\draw[arrow] (main2)--(cor2);

\draw[arrow] (pac)--(smooth2);

\draw[arrow] (pac)--(count);

\draw[arrow] (uniform)--(main1);

\draw[arrow] (uniform)--(main2);

\end{tikzpicture}

\end{center}

\end{document}